\newcommand{\dsp}{\displaystyle}
\newcommand{\eps}{\varepsilon}
\newcommand{\om}{\omega}
\newcommand{\Om}{\Omega}
\newcommand{\mrm}[1]{\mathrm{#1}}
\newcommand{\bfx}{\boldsymbol{x}}
\newcommand{\bfnu}{\boldsymbol{\nu}}
\newcommand{\Cplx}{\mathbb{C}}
\newcommand{\N}{\mathbb{N}}
\newcommand{\R}{\mathbb{R}}
\newcommand{\mL}{\mrm{L}}
\newcommand{\mH}{\mrm{H}}
\newcommand{\mX}{\mrm{X}}
\newcommand{\curlvec}{\boldsymbol{\mrm{curl}}\,}
\renewcommand{\div}{\mrm{div}}
\newcommand{\GX}{X}
\newcommand{\GL}{L}
\def\E{\boldsymbol{E}}
\def\H{\boldsymbol{H}}
\newtheorem{theorem}{Theorem}[section]
\newtheorem{lemma}[theorem]{Lemma}
\newtheorem{remark}[theorem]{Remark}
\newtheorem{proposition}[theorem]{Proposition}
\definecolor{MonVert}{RGB}{0,243,126}
\begin{document}

~\vspace{0.0cm}
\begin{center}
{\sc \bf\huge\selectfont Trapped modes in electromagnetic waveguides}\\[16pt]
\end{center}
\begin{center}
\textsc{Anne-Sophie Bonnet-Ben Dhia}$^1$, \textsc{Lucas Chesnel}$^2$, \textsc{Sonia Fliss}$^1$\\[16pt]
\begin{minipage}{0.97\textwidth}
		{\small
$^1$ POEMS, CNRS, Inria, ENSTA, Institut Polytechnique de Paris, Palaiseau, France;\\
$^2$ Inria, UMA, ENSTA, Institut Polytechnique de Paris, Palaiseau, France.\\[10pt]		
			E-mails: \texttt{anne-sophie.bonnet-bendhia@ensta.fr}, \texttt{lucas.chesnel@inria.fr},  \texttt{sonia.fliss@ensta.fr}.\\[-14pt]
			\begin{center}
				(\today)
			\end{center}
		}
	\end{minipage}
\end{center}
\textbf{Abstract.} We consider Maxwell's equations with perfect electric conductor boundary conditions in three-dimensional unbounded domains which are the union of a bounded resonator and one or several semi-infinite waveguides. We are interested in the existence of electromagnetic trapped modes, \textit{i.e.} $\mL^2$ solutions of the problem without source term. These trapped modes are associated to eigenvalues of Maxwell's operator, that can be either below the essential spectrum or embedded in it. First for homogeneous waveguides, we present different families of geometries for which we can prove the existence of eigenvalues. Then we exhibit certain non-homogeneous waveguides with local perturbations of the dielectric constants that support trapped modes. Let us mention that some of the mechanisms we propose are very specific to Maxwell's equations and have no equivalent for the scalar Dirichlet or Neumann Laplacians.\\[5pt] 
\noindent\textbf{Key words.} Maxwell's equations, waveguides, trapped modes, min-max principle, discrete spectrum, embedded eigenvalues, bound states in the continuum.\\[5pt]
\noindent\textbf{Mathematics Subject Classification.} 78A25, 78A40, 78A50, 35P15, 35Q61

\section{Introduction}

Trapping waves in a bounded region of an unbounded domain is in general a difficult task because waves can escape by radiating to infinity. For instance, as a consequence of Rellich's theorem, see \textit{e.g.} \cite[Thm. 2.14]{CoKr13}, there is no non-trivial $\mL^2$ solution of the Helmholtz equation in an exterior domain in $\R^d$, $d=2,3$. This means that a bounded obstacle embedded in a homogeneous material filling $\R^d$, whether it is penetrable or not, cannot trap waves. This is true in acoustics but in electromagnetism and elasticity as well. However, it has been known for a long time that they can exist in unbounded waveguides (see the historical articles \cite{Urse51,Jone53} concerning water-waves problems). The reason is that, roughly speaking, it is less easy for waves to escape in these geometries. Indeed, at a given frequency, only a finite number (possibly none) of modes can propagate  while the others are evanescent.\\
\newline 
The study of trapped waves in waveguides can be reformulated as  a problem of spectral theory for self-adjoint operators. The latter have a so-called essential spectrum corresponding to the range of frequencies for which propagating modes exist. Trapped waves, more commonly referred to as trapped modes, are  eigenfunctions of these operators associated with eigenvalues which can be in the discrete spectrum or embedded in the essential spectrum. The cases of the Laplacian operators in two and three dimensions with either Dirichlet (quantum waveguides) or Neumann (acoustic waveguides) boundary conditions have been considered first. In quantum waveguides, the lower bound of the essential spectrum, is positive. Therefore it is rather simple to trap modes at low frequencies and existence of discrete eigenvalues below the essential spectrum has been established in a large variety of situations (waveguides with a so-called resonator large enough \cite{na457}, bent waveguides \cite{DuEx95}, L-shaped or X-shaped waveguides \cite{ExSt89}...). Most of the proofs rely on the min-max principle. They consist in finding test functions for which the Rayleigh quotient is less than the lower bound of the essential spectrum. By contrast, in acoustic waveguides, the essential spectrum fills the half-line $[0;+\infty)$. As a consequence, the study of eigenvalues is more subtle than in quantum waveguides because they are necessarily embedded eigenvalues. Their existence has been proved first in geometries with symmetries, so that embedded eigenvalues correspond to discrete spectrum for a restriction of the operator to some subspace \cite{Wits90,Evan92,EvLV94,DaPa98,LiMc07}. Trapped modes associated with embedded eigenvalues are also often called bound states in the continuum (BSCs or BICs) in quantum mechanics or in optics (see for example \cite{SaBR06,Mois09,GPRO10,zhen2014topological,gomis2017anisotropy} as well as the review \cite{HZSJS16}). Let us emphasize that contrary to eigenvalues below the essential spectrum in quantum waveguides, they are unstable objects with respect to perturbations of the geometry. Of course embedded eigenvalues in symmetric domains remain embedded for symmetric perturbations. However, as shown in \cite{AsPV00}, in general embedded eigenvalues become complex resonances under small changes of the geometry. Let us mention though that by tuning carefully the shape of the perturbation, one can force the eigenvalues to stay embedded in the essential spectrum (see \cite{Naza13}), and for example, one can construct non-symmetric waveguides supporting embedded eigenvalues. For another technique of proof of existence of embedded eigenvalues, based on the use of the augmented scattering matrix introduced in \cite{KaNa02}, we refer the reader to \cite{ChPa18}.\\ 
\newline
Thus the literature on trapped modes for scalar problems is relatively rich. Extending the results to the vector problems that arise in elasticity and electromagnetism is not easy. There have been several works concerning elastic waveguides, see \textit{e.g.} \cite{Naza08Elast,Pagn13,Naza20Elast}. On the other hand, there seem to be very few studies for Maxwell's problem except in geometries with complete separation of variables where the analysis boils down to that of scalar operators (see \cite{ExSe90,GoJa92,CLMM93,AYCM06} and \S\ref{ParaSca} below). In more general configurations, the recent article \cite{BCOZ25} where the authors investigate the influence of bending and twisting on the appearance of eigenvalues can be considered as a pioneer in the mathematical community.\\ 
\newline	
The fact that dealing with vector problems is more difficult technically  does not mean that it is more difficult to trap waves for these models. On the contrary, it has been noticed for elastic waveguides that the existence of waves with different polarizations allows for trapping in simpler geometries than for the classical scalar problems. For instance, it is proved in \cite{Pagn13} that a simple semi-infinite elastic cylinder with traction-free boundaries supports trapped waves localized near its edge. Similarly, below we will exploit properties specific to Maxwell's equations, like the fact that Transverse Electro-Magnetic (TEM) modes propagate at all frequencies when the cross-section is not simply connected (see \S\ref{paraTEM}), to exhibit simple electromagnetic waveguides supporting trapped modes.\\
\newline
In addition to their own mathematical interest, there are several motivations to prove existence of trapped modes. First, they are important in the mathematical analysis of scattering problems. At a frequency for which trapped modes exist, the scattering problem is ill-posed. Indeed, trapped modes are outgoing solutions of the problem without incident field. Besides, trapped modes play a crucial role in the study of the transient regime. Indeed, in a geometry with trapped modes, one does not have the classical local energy decay property. Trapped modes can also be useful to exhibit exotic phenomena for the scattering coefficients. More precisely, as said above, if trapped modes are associated to an eigenvalue embedded in the essential spectrum, a slight perturbation of the geometry or material coefficients induces in general a displacement of the eigenvalue from the real axis to the complex plane: trapped modes become so-called quasi-normal modes associated with complex resonances. Then one observes rapid variations of the scattering coefficients for (real) frequencies varying in a neighborhood of such complex resonances. This is the Fano resonance phenomenon which can be exploited to exhibit situations of zero reflection or complete reflection (see \cite{ShTu12,ChNa18,ChNaFano}).\\
\newline 
The goal of this work is to present various three-dimensional  electromagnetic waveguides supporting trapped modes. Concerning the study of the essential spectrum, we rely on the articles \cite{Filo19,FeMa24}.  For a homogeneous waveguide with a simply connected cross-section, the lower bound of the essential spectrum is positive and corresponds to the cut-off frequency of the fundamental Transverse Electric (TE) mode.  If on the contrary the cross-section of the waveguide is not simply connected, then the essential spectrum fills the half-line $[0;+\infty)$ due to  TEM modes. To prove existence of trapped modes, we shall use two different approaches. 
\begin{enumerate}
	\item In geometries allowing for complete separation of variables, we construct exact trapped modes from eigenfunctions of the 2D Dirichlet/Neumann Laplacians. Depending on the case, the eigenvalues we obtain for Maxwell's problem are either below or embedded in the essential spectrum. 
	\item When complete separation of variables cannot be used and when the essential spectrum has a positive lower bound, we exploit the min-max principle to establish existence of discrete spectrum. The design of appropriate test fields is delicate, due in particular to the divergence free condition. 
To find relevant test fields, we follow different strategies.
\begin{itemize}
\item If the perturbed part of the infinite waveguide is itself a portion of a waveguide where separation of variables can be used, we can choose as 
test fields particular eigenmodes of this bounded domain, extended by zero outside. Such test fields are compactly supported.
\item We prove other existence results involving non-compactly supported test fields. The latter are obtained by working with trapped modes of simpler problems, either for the scalar Dirichlet  Laplacian or for Maxwell's operator in subdomains where separation of variables can be used. In both cases, a gradient of a function defined on the entire domain must be added so that test fields be divergence free.
\item Finally, we work with test fields with non-compact support which are obtained by multiplying the TE mode associated to the lower bound of the essential spectrum by a slowly decaying
function. We use this technique to address the case of local variations of the material coefficients in straight waveguides. 
\end{itemize}
\end{enumerate}

\noindent The outline of the article is as follows. The setting and notation are introduced in Section \ref{SectionSetting}. 
In Section \ref{SectionFullySeparable}, we work in geometries where one has complete separation of variables to give first simple examples of waveguides supporting trapped modes. In Section \ref{Section1}, we exploit separation of variables only in a bounded part of the domain (the  resonator) to build \textit{ad hoc} test fields for the min-max principle.  
In Section \ref{Section6Legs}, we show the existence of trapped modes in situations where separation of variables cannot be used. We start by proving a result of comparison of the first eigenvalue of Maxwell's operator with the first eigenvalue of the Dirichlet Laplacian which allows us to guarantee that for domains with large enough resonators, for sure there are eigenvalues below the essential spectrum. Then we study two canonical geometries, respectively with three and six infinite branches, for which from eigenfunctions of the 2D Dirichlet Laplacians in L-shaped and X-shaped domains, we build well-suited test fields for the min-max principle. This part requires very sharp estimates to prove that the Rayleigh quotient is less than the lower bound of the essential spectrum. Section \ref{SectionVariableCoef} is dedicated to the study of locally heterogeneous waveguides. In Section \ref{SectionSym}, by playing with symmetries and adapting a classical trick used for scalar operators, we explain how to create geometries supporting embedded eigenvalues. We give concluding remarks in Section \ref{SectionConclusion}. Finally, in the Appendix, we discuss the case of non-simply connected waveguides with non-connected boundaries and give the proof of one technical result needed in Section \ref{Section6Legs}.

\section{The electric operator}\label{SectionSetting}

\begin{figure}[!ht]
\centering
\includegraphics[width=8cm,trim={4.6cm 0cm 4.4cm 0cm},clip]{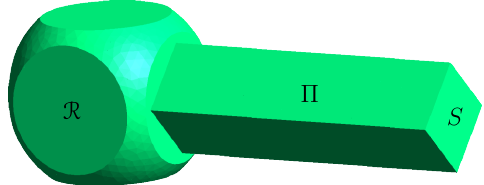}
\caption{Example of waveguide $\Om$ (the guide $\Pi$ extends to infinity).\label{Marteau0}}
\end{figure}

Denote by $\bfx=(x,y,z)$ the coordinates in $\R^3$. Let $\Om$ be a simply connected open subset of $\R^3$ whose boundary $\partial\Om$ is connected\footnote{See a brief discussion in the Appendix, \S\ref{ParaNonTrivTop}, when these assumptions are not met.} and Lipschitz continuous. To set ideas, we start by describing a simple geometry. Assume here that for $z>0$, $\Om$ coincides with the cylinder 
\begin{equation}\label{DefOm1}
\Pi\coloneqq S\times(0;+\infty),
\end{equation}
where the section $S$ is a bounded domain of $\R^2$ with Lipschitz boundary $\partial S$. Moreover, we assume that the remaining part of the guide
\begin{equation}\label{DefOm2}
\mathcal{R}\coloneqq\Om\setminus\overline{\Pi}=\{\bfx\in\Om\,|\,z<0\},
\end{equation}
that we call the resonator, is bounded (see Figure \ref{Marteau0}\footnote{This 3D representation as well as the similar ones below have been obtained with the mesh generator \href{https://gmsh.info/}{Gmsh} \cite{GeRe09}.}). We look at the spectrum of the electric problem which writes, when the dielectric permittivity $\eps$ and the magnetic permeability $\mu$ are such that $\eps\equiv1$, $\mu\equiv1$ in $\Om$, 
\begin{equation}\label{DefPb0}
\begin{array}{|rcll}
\curlvec\curlvec\E&=&\lambda\E&\mbox{ in }\Om\\[2pt]
\div\,\E&=&0&\mbox{ in }\Om\\[2pt]
\E\times\bfnu&=&0&\mbox{ on }\partial\Om.
\end{array}
\end{equation}
Here $\bfnu$ stands for the unit outward normal vector to $\partial\Om$. Let us construct an unbounded operator associated with (\ref{DefPb0}). To proceed, introduce the spaces 
\begin{equation}\label{DefSobolev}
\begin{array}{rcl}
\boldsymbol{\mL}^2(\Om)&\coloneqq&(\mL^2(\Om))^3\\[3pt]
\boldsymbol{\mH}(\div;0)&\coloneqq&\{\E\in\boldsymbol{\mL}^2(\Om)\,|\,\div\,\E=0\mbox{ in }\Om\}\\[3pt]
\boldsymbol{\mH}_N(\curlvec)&\coloneqq&\{\E\in\boldsymbol{\mL}^2(\Om)\,|\,\curlvec\E\in\boldsymbol{\mL}^2(\Om)\mbox{ and }\E\times\bfnu=0\mbox{ on }\partial\Om\}\\[3pt]
\boldsymbol{\mX}_N(\Om)&\coloneqq&\boldsymbol{\mH}_N(\curlvec)\cap\boldsymbol{\mH}(\div;0).
\end{array}
\end{equation}
Next, we define the unbounded operator $A$ in the Hilbert space $\boldsymbol{\mH}(\div;0)$ such that 
\begin{equation}\label{DefOpA}
\begin{array}{|rcl}
D(A)&=&\{\E\in\boldsymbol{\mX}_N(\Om)\,|\,\curlvec\curlvec\E\in\boldsymbol{\mL}^2(\Om)\}\\[4pt]
A\E&=&\curlvec\curlvec\E,
\end{array}
\end{equation}
where $D(A)$ is the domain of $A$. Note that indeed $A\E$ belongs to $\boldsymbol{\mH}(\div;0)$ for $\E\in D(A)$. One shows that $A$ is self-adjoint (see \textit{e.g.} \cite{BiSo87b}). Since it is also non-negative, its spectrum, that we denote by $\sigma(A)$, satisfies $\sigma(A)\subset[0;+\infty)$. Let $\sigma_{\mrm{ess}}(A)$ stand for its essential spectrum. Classically, essential spectrum for $A$ is due to the existence of propagating modes, \textit{i.e.} solutions of the form $\E(\bfx)=\mathscr{E}(x,y)e^{i\beta z}$, with $\beta\in\R$, solving (\ref{DefPb0}) in the straight part $\Pi$ of the guide. When computing these solutions with separate variables, one obtains different families of fields, the so-called Transverse Electric (TE), Transverse Magnetic (TM) and Transverse ElectroMagnetic (TEM) modes, the TEM modes existing if and only if $S$ is not simply connected (see an example of such geometry in Figure \ref{EmbeddedGeom} left).\\ 
\newline
More precisely, for the first TE mode, one finds
\begin{equation}\label{DefTEmodes}
\E^{\mrm{TE}}_\pm(\bfx)=\left(\begin{array}{c}\curlvec_{\mrm{2D}} \varphi_N(x,y)\\0\end{array}\right)e^{\pm i\beta_{N} z},\qquad\mbox{ with }\beta_{N}\coloneqq\sqrt{\lambda-\lambda_N}
\end{equation}
and
\[
\curlvec_{\mrm{2D}}\,\varphi_N=(\partial_y\varphi_N,-\partial_x\varphi_N)^{\top}.
\]
Here $\lambda_N$ denotes the first positive eigenvalue of the Neumann Laplacian in $S$ and $\varphi_N$ is a corresponding eigenfunction. This TE mode is propagating for $\lambda>\lambda_N$. Note that the zero eigenvalue of the Neumann Laplacian in $S$ plays no role because the associated eigenfunction is constant and therefore in this case the field (\ref{DefTEmodes}) is null.\\
\newline
For the first TM mode, one gets
\begin{equation}\label{DefTMmodes}
\E^{\mrm{TM}}_\pm(\bfx)=\left(\begin{array}{c}\nabla \varphi_D(x,y)\\[2pt]\mp i\beta_D^{-1}\lambda_D\varphi_D(x,y)\end{array}\right)e^{\pm i\beta_D z},\qquad\mbox{ with }\beta_D\coloneqq\sqrt{\lambda-\lambda_D}.
\end{equation}
Here $\lambda_D$ stands for the first eigenvalue of the Dirichlet Laplacian in $S$ and $\varphi_D$ is a corresponding eigenfunction. This TM mode is propagating for $\lambda>\lambda_D$.\\
\newline
We will give the definition of TEM modes later in (\ref{DefSectionsNotSC}) because it requires to introduce additional notation (for example, they depend on the number of holes in $S$). However what is important for the features of $\sigma_{\mrm{ess}}(A)$ is that they are propagating for all $\lambda>0$.\\  
\newline
N. Filonov establishes in \cite{Filo05} that one has always $\lambda_N<\lambda_D$ (see also the proof of L. Friedlander in \cite{Frie91} for $\mathscr{C}^1$ domains). In other words, the first positive eigenvalue of the Neumann Laplacian is less than the first eigenvalue of the Dirichlet Laplacian, actually in any arbitrary domain of finite measure in $\R^d$, $d > 1$. From this, we infer the following statement whose proof can be obtained from the results of \cite{Filo19,FeMa24}:
\begin{proposition}\label{DefSigmaEss}~\\[2pt]
If $S$ is simply connected, then $\sigma_{\mrm{ess}}(A)=[\lambda_N;+\infty)$.\\[2pt]
If $S$ is not simply connected, then $\sigma_{\mrm{ess}}(A)=[0;+\infty)$.
\end{proposition}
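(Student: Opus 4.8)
The plan is to combine a localization argument at infinity with an explicit direct integral analysis of the straight waveguide, the essential spectrum being ``carried'' entirely by the cylindrical end. First I would reduce the computation of $\sigma_{\mrm{ess}}(A)$ to that of the operator $A_\infty$ defined exactly as $A$ in \eqref{DefOpA} but on the full straight cylinder $S\times\R$ (which has purely essential spectrum by translation invariance, so that $\sigma(A_\infty)=\sigma_{\mrm{ess}}(A_\infty)$). Since the resonator $\mathcal{R}$ is bounded, $A$ and $A_\infty$ coincide away from a bounded set, and one expects $\sigma_{\mrm{ess}}(A)=\sigma_{\mrm{ess}}(A_\infty)$: given a singular Weyl sequence $(\E_n)\subset D(A)$ for some $\lambda\in\sigma_{\mrm{ess}}(A)$ one multiplies by a cut-off $\chi$ supported far out in $\Pi$ and then corrects the divergence of $\chi\E_n$ (by subtracting $\nabla q_n$, where $q_n$ solves an elliptic problem with right-hand side $\nabla\chi\cdot\E_n$, supported where $\nabla\chi\neq0$) so as to produce a Weyl sequence for $A_\infty$, and conversely. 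This is precisely the type of statement established in \cite{Filo19,FeMa24}, which I would invoke. The delicate point here is that $\boldsymbol{\mH}(\div;0)$ is not a local space — truncating a divergence free field destroys the constraint — so the cutting and gluing must be accompanied by a divergence correction with controlled support and $\mL^2$ norm; this is the reason the scalar (Dirichlet/Neumann Laplacian) reasoning does not transfer verbatim.

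Next I would compute $\sigma(A_\infty)$. By translation invariance in $z$, the partial Fourier transform $\mathcal{F}_z$ in the longitudinal variable yields $A_\infty\cong\int_{\R}^{\oplus}A(\beta)\,d\beta$, where $A(\beta)$ acts on fields on the bounded cross-section $S$ that are divergence free in the $\beta$-twisted sense ($\div_{\mrm{2D}}\mathscr{E}_{\perp}+i\beta\,\mathscr{E}_z=0$) and satisfy the PEC condition on $\partial S$; hence $\sigma(A_\infty)=\overline{\bigcup_{\beta\in\R}\sigma(A(\beta))}$. Since $S$ is bounded, $A(\beta)$ has compact resolvent, so its spectrum is a discrete set of eigenvalues, and a Helmholtz-type decomposition of fields on $S$ adapted to the boundary condition and to the topology of $S$ identifies these eigenvalues exactly: the TE branches $\beta^2+\mu$, $\mu$ ranging over the positive Neumann eigenvalues of $-\Delta$ in $S$ (cf. (\ref{DefTEmodes})); the TM branches $\beta^2+\mu$, $\mu$ ranging over the Dirichlet eigenvalues of $-\Delta$ in $S$ (cf. (\ref{DefTMmodes})); and, if and only if $S$ is not simply connected, the TEM branch $\beta^2$ with multiplicity equal to the number of holes of $S$, coming from the nontrivial space of curl free, divergence free tangential fields on $S$. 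Note that the zero Neumann eigenvalue produces no branch, since the corresponding constant potential gives a vanishing TE field.

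Finally I would take the union over $\beta\in\R$. Each branch is a half-line $[c;+\infty)$ with $c\geq0$, so the union is already closed and $\sigma(A_\infty)=\bigl[\,m;+\infty\bigr)$ with $m=\inf_{\beta}\inf\sigma(A(\beta))$. If $S$ is simply connected there is no TEM branch, and since Filonov's inequality gives $\lambda_N<\lambda_D$, the lowest branch is the first TE one, $\beta^2+\lambda_N$, whence $m=\lambda_N$ and $\sigma(A_\infty)=[\lambda_N;+\infty)$. If $S$ is not simply connected, the TEM branch $\beta^2$ is present and attains $0$, so $m=0$ and $\sigma(A_\infty)=[0;+\infty)$. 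Together with the first step this yields the proposition. The main obstacle is the first step: making the decomposition principle rigorous for the Maxwell operator in spite of the non-local divergence constraint (and, on the fiber side, correctly accounting for the cohomology of $S$ in the Helmholtz decomposition that produces the TEM modes) — which is exactly the analysis carried out in \cite{Filo19,FeMa24}.
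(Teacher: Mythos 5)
Your proposal is correct and follows essentially the same route as the paper, which gives no self-contained proof but derives the statement from the results of \cite{Filo19,FeMa24} combined with Filonov's inequality $\lambda_N<\lambda_D$ to identify the bottom of the essential spectrum. Your sketch (decomposition principle with divergence correction, direct integral over $\beta$, TE/TM/TEM branches with the TEM branch present exactly when $S$ is not simply connected) is a faithful outline of that argument and correctly defers the delicate steps to the same references.
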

\noindent Let us give a concrete example for a geometry that we will encounter later on. If $S$ is the rectangle $(0;a)\times(0;b)$ with $a\ge b>0$, the first eigenvalues of the Neumann and Dirichlet Laplacians are respectively 
\[
0,\pi^2/a^2,\min(\pi^2/b^2,4\pi^2/a^2), \dots\qquad\mbox{ and }\qquad \pi^2/a^2+\pi^2/b^2,\dots.
\] 
Indeed one has $\pi^2/a^2<\pi^2/a^2+\pi^2/b^2$ and there holds $\sigma_{\mrm{ess}}(A)=[\pi^2/a^2;+\infty)$. Note that in the recent work \cite{Rohl25}, J. Rohleder shows that actually the first two positive eigenvalues of the Neumann Laplacian are less than the first eigenvalue of the Dirichlet Laplacian in bounded simply connected Lipschitz domains of $\R^2$ (observe that for the above rectangle, one has indeed $\min(\pi^2/b^2,4\pi^2/a^2)<\pi^2/a^2+\pi^2/b^2$).\\
\newline
This ends the description of $\sigma_{\mrm{ess}}(A)$. We denote by $\sigma_{\mrm{p}}(A)$ the point spectrum of $A$, \textit{i.e.} the set of eigenvalues of $A$. Moreover, we decompose this point spectrum 
into two parts, namely
\[
\sigma_{\mrm{p}}(A)=\sigma_{\mrm{d}}(A)\cup\sigma_{\mrm{emb}}(A),
\]
where the discrete spectrum and the set of embedded eigenvalues are respectively defined by
\[
\begin{array}{|rclcl}
\sigma_{\mrm{emb}}(A) & \coloneqq &  \sigma_{\mrm{p}}(A)\cap \sigma_{\mrm{ess}}(A) & =& \{\lambda\in\sigma_{\mrm{p}}(A)\,|\,\lambda\ge\lambda_N\} \\[3pt]
\sigma_{\mrm{d}}(A) & \coloneqq & \sigma_{\mrm{p}}(A)\setminus \sigma_{\mrm{emb}}(A)& = & \{\lambda\in\sigma_{\mrm{p}}(A)\,|\,\lambda<\lambda_N\} .
\end{array}
\] 
With the results concerning the modal decompositions of the solutions to Maxwell's equations in homogeneous waveguides (see \textit{e.g.} \cite{Cess96,PlPo2014,Kim17,PlPo18,PlPoSa18}), one proves that trapped modes, \textit{i.e.} the eigenfunctions in $\boldsymbol{\mL}^2(\Om)$ associated with the eigenvalues of $\sigma_{\mrm{p}}(A)$, decay exponentially as $z\to+\infty$.\\ 
\newline
Above, to simplify the presentation, we considered a domain $\Om$ which is the union of a resonator and one unbounded semi-infinite waveguide. The operator $A$ can be defined similarly when $\Om$ is the union of a resonator and several unbounded semi-infinite waveguides $\Pi_1,\dots,\Pi_M$ as in Figure \ref{GeomBug} left. If one of the $\Pi_m$, $m=1,\dots,M$, has a transverse section $S_m\subset\R^2$ which is not simply connected, then $\sigma_{\mrm{ess}}(A)=[0;+\infty)$. Otherwise $\sigma_{\mrm{ess}}(A)=[\min_{m=1,\dots,M}\lambda_N(S_m);+\infty)$ where $\lambda_N(S_m)>0$ is the smallest positive eigenvalue of the Neumann Laplacian in $S_m$.

\section{Waveguides with complete separation of variables}\label{SectionFullySeparable}
In this section, we consider geometries where we can construct explicitly eigenpairs of the operator $A$ from eigenpairs of the Dirichlet and Neumann scalar Laplacians in 2D domains. Global separation of variables is crucial in the approach.

\subsection{Construction from eigenvalues of the 2D Dirichlet Laplacian}\label{ParaSca}

\begin{figure}[!ht]
\centering
\hspace{-1.4cm}\begin{tikzpicture}[scale=1.2]
\draw[draw=black,fill=gray!30] (0,0) circle (0.9);
\draw[fill=gray!30,draw=none](-0.4,-0.4) rectangle (0.4,2);
\draw (-0.4,-0.4)--(-0.4,2);
\draw (0.4,-0.4)--(0.4,2);
\draw[dashed] (-0.4,2.3)--(-0.4,2);
\draw[dashed] (0.4,2.3)--(0.4,2);
\draw[fill=gray!30,draw=none,rotate=20](-2,-0.3) rectangle (0,0.3);
\draw[fill=gray!30,draw=none,rotate=-20](0,-0.5) rectangle (2,0.5);
\draw[dashed,rotate=-20] (2.3,0.5)--(2,0.5); 
\draw[dashed,rotate=-20] (2.3,-0.5)--(2,-0.5); 
\draw[dashed,rotate=20] (-2.3,0.3)--(-2,0.3); 
\draw[dashed,rotate=20] (-2.3,-0.3)--(-2,-0.3); 
\draw[rotate=20] (-2,0.3)--(-0.24,0.3);
\draw[rotate=-20] (2,0.5)--(0.24,0.5);
\draw[rotate=20] (-2,-0.3)--(-0.17,-0.3);
\draw[rotate=-20] (2,-0.5)--(0.17,-0.5);
\begin{scope}[rotate=-20]
\draw[<->] (1.1,-0.45)--(1.1,0.45);
\end{scope}
\draw[draw=none,fill=gray!30] (0,0) circle (0.89);
\node at (0,0) {$\Om_{\mrm{2D}}$};
\node at (1.5,-0.5) {$H_{\mrm{max}}$};
\end{tikzpicture}\qquad
\includegraphics[width=4.6cm]{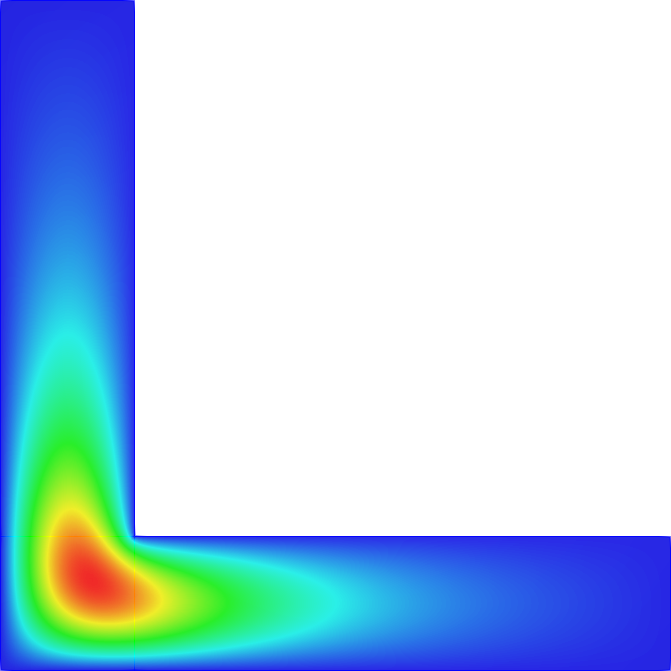}
\begin{tikzpicture}
\draw[<->] (0,0)--(0,0.9);
\node at (0.2,0.45) {$1$};
\end{tikzpicture}
\hspace{-3.6cm}\raisebox{1.6cm}{\begin{tikzpicture}
\draw[->] (0,0)--(0,1);
\draw[->] (0,0)--(1,0);
\node at (1.2,0) {$z$};
\node at (0,1.2) {$y$};
\end{tikzpicture}}
\qquad\qquad
\includegraphics[width=5cm]{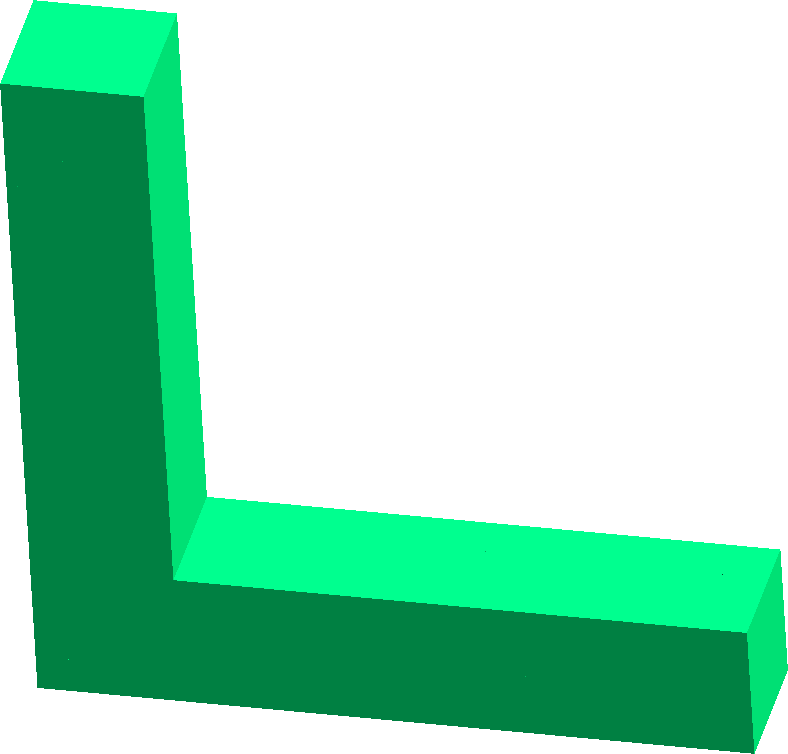}
\hspace{-3cm}\raisebox{1.8cm}{\begin{tikzpicture}
\draw[->] (0,0)--(-0.05,0.9);
\draw[->] (0,0)--(0.8,-0.08);
\draw[->] (0,0)--(0.17,0.4);
\node at (0.25,0.6) {$x$};
\node at (-0.05,1.1) {$y$};
\node at (1,-0.1) {$z$};
\end{tikzpicture}}
\caption{Example of $\Om_{\mrm{2D}}$ (left), trapped mode for the 2D L-shaped domain (center), 3D L-shaped domain (right).\label{LGeom}}
\end{figure}

\noindent Let $\Om_{\mrm{2D}}$ be a connected domain of $\R^2$ with Lipschitz boundary which coincides with a union of semi-infinite strips outside of a bounded region (see Figure \ref{LGeom} left). Assume that the geometry is such that the Dirichlet Laplacian in $\Om_{\mrm{2D}}$ admits an eigenvalue $\lambda_\bullet>0$ (in the discrete spectrum or  embedded in the essential spectrum, see \cite{Wits90}). Now for $a>0$, define the 3D waveguide
\begin{equation}\label{DefOmSepVar}
\Om\coloneqq\{(x,y,z)\,|\,x\in(0;a)\mbox{ and }(y,z)\in\Om_{\mrm{2D}}\}. 
\end{equation}
\begin{proposition}\label{PropoSeparationVaria}
Assume $\Om$ as in (\ref{DefOmSepVar}). Then $\lambda_\bullet$ is an eigenvalue of $A$, \textit{i.e.} belongs to $\sigma_{\mrm{p}}(A)$.
\end{proposition}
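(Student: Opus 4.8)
The plan is to construct an explicit eigenfunction of $A$ from the 2D Dirichlet eigenfunction $\varphi_\bullet$ associated with $\lambda_\bullet$. Let $\varphi_\bullet\in\mH^1_0(\Om_{\mrm{2D}})$ satisfy $-\Delta_{(y,z)}\varphi_\bullet=\lambda_\bullet\varphi_\bullet$ in $\Om_{\mrm{2D}}$; since $\lambda_\bullet$ is an eigenvalue (below or embedded in the essential spectrum of the 2D Dirichlet Laplacian), $\varphi_\bullet\in\mL^2(\Om_{\mrm{2D}})$. The idea is that a field of the form $\E(\bfx)=\nabla\psi(\bfx)$ with $\psi(x,y,z)=f(x)\varphi_\bullet(y,z)$ cannot work directly (it would fail $\div\,\E=0$ unless $f''=0$), so instead I would look for a TM-type field in the $x$-direction: set
\[
\E(x,y,z)=\begin{pmatrix}\partial_x g(x)\,\varphi_\bullet(y,z)\\ g(x)\,\partial_y\varphi_\bullet(y,z)\\ g(x)\,\partial_z\varphi_\bullet(y,z)\end{pmatrix},
\]
and choose $g$ so that all three conditions of (\ref{DefPb0}) hold. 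A direct computation of $\div\,\E$ gives $g''\varphi_\bullet+g\,\Delta_{(y,z)}\varphi_\bullet=(g''-\lambda_\bullet g)\varphi_\bullet$, so $\div\,\E=0$ forces $g''=\lambda_\bullet g$; since we want $\E\in\boldsymbol{\mL}^2(\Om)$ and $x$ ranges over the \emph{bounded} interval $(0;a)$, this is not an obstruction. Next, $\E\times\bfnu=0$ on $\partial\Om$ splits into the two kinds of boundary pieces: on $\{x=0\}\cup\{x=a\}$ the tangential components are $(\,g\,\partial_y\varphi_\bullet,\,g\,\partial_z\varphi_\bullet\,)$, which vanish iff $g(0)=g(a)=0$; on the lateral boundary $(0;a)\times\partial\Om_{\mrm{2D}}$ the tangential trace involves $\varphi_\bullet$ and its tangential derivative along $\partial\Om_{\mrm{2D}}$, both of which vanish because $\varphi_\bullet\in\mH^1_0$. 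The condition $g''=\lambda_\bullet g$, $g(0)=g(a)=0$, $g\not\equiv0$ has a nontrivial solution precisely when $\lambda_\bullet=-k^2\pi^2/a^2$ for some integer $k$ — which is impossible since $\lambda_\bullet>0$.

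So this naive TM ansatz fails, and the correct construction must instead be of TE type with respect to $x$. I would therefore take
\[
\E(x,y,z)=\begin{pmatrix}0\\ h(x)\,\partial_z\varphi_\bullet(y,z)\\ -h(x)\,\partial_y\varphi_\bullet(y,z)\end{pmatrix}
=\begin{pmatrix}0\\ \curlvec_{\mrm{2D}}\big(h(x)\varphi_\bullet(y,z)\big)\ \text{(appropriately read)}\end{pmatrix},
\]
i.e. $\E = \curlvec\big(h(x)\,\varphi_\bullet(y,z)\,\boldsymbol{e}_x\big)$ where $\boldsymbol{e}_x$ is the unit vector in the $x$-direction. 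Then $\div\,\E=0$ automatically (a curl is divergence free), with no constraint relating $h''$ and $\lambda_\bullet$. The boundary condition $\E\times\bfnu=0$: on the lateral part $(0;a)\times\partial\Om_{\mrm{2D}}$ it reduces, after expanding, to $h(x)\,\varphi_\bullet=0$ on $\partial\Om_{\mrm{2D}}$ (for the component normal to $\partial\Om_{\mrm{2D}}$) together with $h(x)$ times a tangential derivative that also vanishes — both hold since $\varphi_\bullet\in\mH^1_0(\Om_{\mrm{2D}})$; on $\{x=0\}\cup\{x=a\}$ the tangential components of $\E$ are $(\,h\,\partial_z\varphi_\bullet,\,-h\,\partial_y\varphi_\bullet\,)$, which vanish iff $h(0)=h(a)=0$. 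Finally the equation $\curlvec\curlvec\E=\lambda\E$: computing $\curlvec\curlvec$ of the above and using $-\Delta_{(y,z)}\varphi_\bullet=\lambda_\bullet\varphi_\bullet$, one finds $\curlvec\curlvec\E = -h''(x)\,(\,0,\partial_z\varphi_\bullet,-\partial_y\varphi_\bullet\,)^\top + \lambda_\bullet\,\E - (\text{correction terms})$; the cleanest choice is to take $h$ constant on $(0;a)$ — but that contradicts $h(0)=h(a)=0$. Hence one must take $h(x)=\sin(k\pi x/a)$, giving $-h''=(k^2\pi^2/a^2)h$ and
\[
\curlvec\curlvec\E=\Big(\lambda_\bullet+\tfrac{k^2\pi^2}{a^2}\Big)\E,
\]
so that $\lambda=\lambda_\bullet+k^2\pi^2/a^2$ is an eigenvalue — but this is \emph{not} $\lambda_\bullet$ itself.

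The resolution — and this is the main point I would get right in the final write-up — is that the correct ansatz takes $\E$ \emph{independent of $x$}, namely
\[
\E(x,y,z)=\begin{pmatrix}0\\ \partial_z\varphi_\bullet(y,z)\\ -\partial_y\varphi_\bullet(y,z)\end{pmatrix},
\]
which is $x$-independent, lies in $\boldsymbol{\mL}^2(\Om)$ because $\varphi_\bullet\in\mH^1(\Om_{\mrm{2D}})\cap\mL^2$ and $x\in(0;a)$ is bounded, is divergence free, satisfies $\E\times\bfnu=0$ on the lateral boundary since $\varphi_\bullet\in\mH^1_0(\Om_{\mrm{2D}})$, and satisfies $\E\times\bfnu=0$ on $\{x=0\}\cup\{x=a\}$ because the $x$-component of $\E$ is zero while the condition there only constrains the tangential (i.e.\ $y$ and $z$) components — wait, those are $\partial_z\varphi_\bullet$ and $-\partial_y\varphi_\bullet$, which need \emph{not} vanish. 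So even this needs care: on $\{x=0\}$ one has $\bfnu=-\boldsymbol{e}_x$ and $\E\times\bfnu$ has components proportional to the $y,z$-components of $\E$, which do not vanish. Therefore the genuinely correct construction is the \emph{gradient} field $\E=\nabla\Phi$ with $\Phi(x,y,z)=\varphi_\bullet(y,z)$: then $\E=(0,\partial_y\varphi_\bullet,\partial_z\varphi_\bullet)^\top$, we have $\curlvec\E=0$ hence $\curlvec\curlvec\E=0\ne\lambda_\bullet\E$ — also wrong. The honest path, which I will pursue carefully in the final proof, is: seek $\E$ with $\E_x\equiv0$ and $(\E_y,\E_z)(x,y,z)=\boldsymbol{w}(y,z)$ independent of $x$, impose $\curlvec_{\mrm{2D}}\curlvec_{\mrm{2D}}\boldsymbol{w}=\lambda_\bullet\boldsymbol{w}$, $\div_{\mrm{2D}}\boldsymbol{w}=0$, and $\boldsymbol{w}\cdot\bfnu_{\mrm{2D}}=0$ on $\partial\Om_{\mrm{2D}}$ — i.e.\ $\boldsymbol{w}$ is a 2D "TM-type" eigenfield of the Maxwell operator in $\Om_{\mrm{2D}}$ with Neumann-type scalar condition; and the standard dictionary between 2D Maxwell eigenfields and Dirichlet Laplacian eigenfunctions gives $\boldsymbol{w}=\curlvec_{\mrm{2D}}\varphi_\bullet$ with eigenvalue $\lambda_\bullet$, while the condition $\E\times\bfnu=0$ on $\{x=0,a\}$ is automatic because \emph{on those faces $\bfnu=\mp\boldsymbol{e}_x$ and $\E\times\bfnu$ picks out $(\E_z,-\E_y,0)$-type combinations which vanish once one checks $\boldsymbol{w}$ is tangential there} — this is where I must be most careful, and I expect verifying this face condition together with the divergence-free and curl-curl identities to be the crux; the outcome will be exactly $\lambda=\lambda_\bullet\in\sigma_{\mrm p}(A)$, with eigenfunction $\E=(0,\partial_y\varphi_\bullet,\partial_z\varphi_\bullet)^\top$ reinterpreted through the 2D curl, and the whole argument is a clean separation-of-variables computation once the boundary bookkeeping on the two flat faces $x=0$ and $x=a$ is settled.
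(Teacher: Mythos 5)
Your proposal does not reach a valid proof: every ansatz you consider ultimately fails, including the one you settle on at the end. The recurring error is in the boundary condition on the two flat faces $\{x=0\}$ and $\{x=a\}$. There $\bfnu=\mp\boldsymbol{e}_x$, and $\E\times\bfnu=0$ kills precisely the \emph{tangential} (i.e.\ $y$- and $z$-) components of $\E$; the $x$-component is unconstrained. So a field of the form $\E=(0,\boldsymbol{w}(y,z))^\top$ with $\boldsymbol{w}\not\equiv0$ can never satisfy the PEC condition on those faces — being ``tangential there'' is exactly what disqualifies it, not what saves it. Your final claim that the face condition is ``automatic once one checks $\boldsymbol{w}$ is tangential there'' gets this backwards, and the eigenfunction you announce, $\E=(0,\partial_y\varphi_\bullet,\partial_z\varphi_\bullet)^\top$ (or its rotated variant), is not in $\boldsymbol{\mH}_N(\curlvec)$. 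A secondary error: in your TE-type attempt $(0,h\,\partial_z\varphi_\bullet,-h\,\partial_y\varphi_\bullet)^\top$, the tangential trace on the lateral boundary $(0;a)\times\partial\Om_{\mrm{2D}}$ is $(h\,\partial_{\boldsymbol{n}}\varphi_\bullet,0,0)^\top$, the \emph{normal} derivative of $\varphi_\bullet$ — which does not vanish for a Dirichlet eigenfunction. That construction is the right one for \emph{Neumann} eigenfunctions of $\Om_{\mrm{2D}}$ (and yields $\lambda_\bullet+m^2\pi^2/a^2$, $m\ge1$), not for Dirichlet ones.

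The construction the paper uses is the one your exploration skips over: put the scalar eigenfunction in the $x$-component, $\E_{\mrm{tr}}=(\varphi_\bullet(y,z),0,0)^\top$. Then $\div\,\E_{\mrm{tr}}=\partial_x\varphi_\bullet=0$ since $\varphi_\bullet$ is $x$-independent; $\curlvec\curlvec\E_{\mrm{tr}}=-\boldsymbol{\Delta}\E_{\mrm{tr}}=\lambda_\bullet\E_{\mrm{tr}}$ using $\curlvec\curlvec=-\boldsymbol{\Delta}+\nabla\div$; on the faces $x=0,a$ the field is parallel to $\bfnu$ so its tangential part vanishes for free; and on the lateral boundary $\E_{\mrm{tr}}=0$ because $\varphi_\bullet\in\mH^1_0(\Om_{\mrm{2D}})$. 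This two-line verification is the whole proof, and it is precisely the ansatz that is normal (rather than tangential) to the two flat faces.
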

\begin{remark}
This result already appears in \cite{ExSe90} and \cite[\S IV.A]{GoJa92}.
\end{remark}
\begin{remark}
According to Proposition \ref{DefSigmaEss}, the essential spectrum of the operator $A$ in the above $\Om$ is equal to $[\min(\pi^2/H^2_\mrm{max},\pi^2/a^2);+\infty)$ where $H_\mrm{max}$ stands for the maximum of the heights of the semi-infinite strips of $\Om_{\mrm{2D}}$ (see Figure \ref{LGeom} left). Therefore if $\lambda_\bullet$ is in the discrete spectrum of the Dirichlet Laplacian in $\Om_{\mrm{2D}}$, depending on the value of $a$, $\lambda_\bullet$ can belong to $\sigma_{\mrm{d}}(A)$ or to $\sigma_{\mrm{emb}}(A)$. Otherwise $\lambda_\bullet$ belongs to $\sigma_{\mrm{emb}}(A)$ for all $a>0$.
\end{remark}
\begin{proof}
Let $\varphi\in\mH^1_0(\Om_{\mrm{2D}})$ be an eigenfunction of the Dirichlet Laplacian associated with $\lambda_\bullet$, \textit{i.e.} a non-zero function such that
\begin{equation}\label{PbForPhi}
\Delta\varphi+\lambda_\bullet\varphi=0\mbox{ in }\Om_{\mrm{2D}}.
\end{equation}
Define the field $\E_{\mrm{tr}}$ (``$\mrm{tr}$'' like trapped) such that
\begin{equation}\label{DefEtr1}
\E_{\mrm{tr}}(\bfx)=\left(\begin{array}{c}
\varphi(y,z)\\
0\\
0
\end{array}\right).
\end{equation}
It belongs to $\boldsymbol{\mH}^1(\Om)\coloneqq(\mH^1(\Om))^3$ and we have $\div\,\E_{\mrm{tr}}=0$ as well as $\boldsymbol{\Delta} \E_{\mrm{tr}}+\lambda_\bullet\E_{\mrm{tr}}=0$ in $\Om$ ($\boldsymbol{\Delta}\cdot$ is the vector Laplacian). Since $\curlvec\curlvec\cdot=-\boldsymbol{\Delta}\cdot+\nabla(\div\cdot)$, this ensures that $\E_{\mrm{tr}}$ satisfies the first two lines of (\ref{DefPb0}) with $\lambda=\lambda_{\bullet}$. It remains to check that $\E_{\mrm{tr}}$ is normal to the boundary on $\partial\Om$. This is true on $\{\bfx\in\partial\Om\,|\,x=0\mbox{ or }x=a\}$ because there, one has
\[
\bfnu=\left(\begin{array}{c}
\pm1\\
0\\
0
\end{array}\right)\qquad\mbox{ and }\qquad
\E_{\mrm{tr}}(\bfx)=\left(\begin{array}{c}
\varphi(y,z)\\
0\\
0
\end{array}\right).
\]
On the other hand, there holds $
\E_{\mrm{tr}}=0\mbox{ on }(0;a)\times\partial\Om_{\mrm{2D}}$ because $\varphi=0$ on $\partial\Om_{\mrm{2D}}$. This shows that $\E_{\mrm{tr}}\times\bfnu=0$ on $\partial\Om$. Finally, we deduce that $\lambda_\bullet$ is an eigenvalue of $A$. 
\end{proof}
\noindent This result can be generalized to prove that in these geometries with complete separation of variables, $A$ has actually an unbounded sequence of embedded eigenvalues.

\begin{proposition}\label{PropSeparationVaria}
Assume $\Om$ as in (\ref{DefOmSepVar}). For all $m\in\N\coloneqq\{0,1,\dots\}$, $\lambda_{\bullet}+m^2\pi^2/a^2$ is an eigenvalue of $A$, \textit{i.e.} belongs to $\sigma_{\mrm{p}}(A)$.
\end{proposition}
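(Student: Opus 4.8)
The plan is to mimic the construction in the proof of Proposition~\ref{PropoSeparationVaria}, but replace the $x$-independent profile $\varphi(y,z)$ by a field whose $x$-dependence is $\cos(m\pi x/a)$ so that the extra eigenvalue shift $m^2\pi^2/a^2$ appears. Concretely, for $m\in\N$ I would look for a trapped mode of the form
\[
\E_{\mrm{tr}}(\bfx)=\cos\!\left(\frac{m\pi x}{a}\right)\!\begin{pmatrix}\varphi(y,z)\\0\\0\end{pmatrix}+\nabla\Big(g(x)\,\psi(y,z)\Big)
\]
for a suitable function $g$ (built from $\sin(m\pi x/a)$) and a suitable $\psi$, chosen so that all three conditions in (\ref{DefPb0}) hold with $\lambda=\lambda_{\bullet}+m^2\pi^2/a^2$. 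The reason a pure product $\cos(m\pi x/a)\,(\varphi,0,0)^{\top}$ does not immediately work is that it is no longer divergence free: $\div$ of it equals $\cos(m\pi x/a)\,\partial_x\varphi=0$ actually \emph{is} still zero since $\varphi$ does not depend on $x$ --- so in fact the naive guess is already divergence free, and one checks directly that $\curlvec\curlvec$ of it equals $(\lambda_{\bullet}+m^2\pi^2/a^2)$ times it, using $\curlvec\curlvec=-\boldsymbol{\Delta}+\nabla(\div\,\cdot)$ together with $\boldsymbol{\Delta}\E_{\mrm{tr}}=(-\lambda_{\bullet}-m^2\pi^2/a^2)\E_{\mrm{tr}}$. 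So I would drop the corrector and simply set
\[
\E_{\mrm{tr}}(\bfx)=\cos\!\left(\frac{m\pi x}{a}\right)\begin{pmatrix}\varphi(y,z)\\0\\0\end{pmatrix}.
\]

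First I would verify the regularity and membership: since $\varphi\in\mH^1_0(\Om_{\mrm{2D}})$ (and is smooth in the interior with $\Delta\varphi=-\lambda_{\bullet}\varphi\in\mL^2$), the field $\E_{\mrm{tr}}$ lies in $\boldsymbol{\mL}^2(\Om)$ (the $x$-section $(0;a)$ is bounded), and it decays exponentially in $z$ by the corresponding decay of $\varphi$; hence it is a genuine $\mL^2$ function, non-zero because $\varphi\not\equiv0$ and $\cos(m\pi x/a)$ is not identically zero on $(0;a)$. Second, I compute $\div\,\E_{\mrm{tr}}=\partial_x\big(\cos(m\pi x/a)\varphi(y,z)\big)=0$ since $\varphi$ is independent of $x$, so $\E_{\mrm{tr}}\in\boldsymbol{\mH}(\div;0)$. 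Third, using the vector identity $\curlvec\curlvec\cdot=-\boldsymbol{\Delta}\cdot+\nabla(\div\,\cdot)$, and noting $\boldsymbol{\Delta}\E_{\mrm{tr}}$ acts componentwise, I get $\boldsymbol{\Delta}\E_{\mrm{tr}}=\big(\partial_x^2+\partial_y^2+\partial_z^2\big)\big(\cos(m\pi x/a)\varphi\big)(1,0,0)^{\top}=\big(-m^2\pi^2/a^2-\lambda_{\bullet}\big)\E_{\mrm{tr}}$, using (\ref{PbForPhi}). Hence $\curlvec\curlvec\E_{\mrm{tr}}=(\lambda_{\bullet}+m^2\pi^2/a^2)\E_{\mrm{tr}}$, so the first two lines of (\ref{DefPb0}) hold with $\lambda=\lambda_{\bullet}+m^2\pi^2/a^2$.

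It remains to check the boundary condition $\E_{\mrm{tr}}\times\bfnu=0$ on $\partial\Om$. On the faces $\{x=0\}$ and $\{x=a\}$ the outward normal is $\bfnu=(\pm1,0,0)^{\top}$, which is parallel to $\E_{\mrm{tr}}=(\cos(m\pi x/a)\varphi,0,0)^{\top}$, so the tangential trace vanishes there (regardless of the value of $\cos(m\pi x/a)$). On the remaining part of the boundary, $(0;a)\times\partial\Om_{\mrm{2D}}$, we have $\E_{\mrm{tr}}=0$ because $\varphi=0$ on $\partial\Om_{\mrm{2D}}$; a fortiori $\E_{\mrm{tr}}\times\bfnu=0$ there. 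Therefore $\E_{\mrm{tr}}\in D(A)$ and $A\E_{\mrm{tr}}=(\lambda_{\bullet}+m^2\pi^2/a^2)\E_{\mrm{tr}}$, which proves that $\lambda_{\bullet}+m^2\pi^2/a^2\in\sigma_{\mrm{p}}(A)$. I do not anticipate a real obstacle here: the only mild subtlety is to be careful that for $m=0$ one recovers exactly Proposition~\ref{PropoSeparationVaria}, and that for $m\ge1$ the cosine profile is essential precisely so that the tangential component of $\E_{\mrm{tr}}$ (which is the whole field, pointing in $x$) need not vanish on $\{x=0,a\}$ --- it is already normal there --- and so that the Dirichlet condition on the lateral boundary is inherited directly from $\varphi$; choosing $\sin(m\pi x/a)$ instead would have been the wrong choice since it would force the field, still pointing in $x$, to have vanishing normal trace but that is automatic, whereas the $\cos$ choice is what makes $\curlvec\curlvec$ diagonalize cleanly without a corrector.
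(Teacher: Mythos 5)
There is a genuine error that invalidates your proof for every $m\ge 1$. Your first instinct --- to add a gradient corrector $\nabla(g(x)\psi(y,z))$ to the naive product $\cos(m\pi x/a)\,(\varphi,0,0)^{\top}$ --- was correct, and you then talked yourself out of it by miscomputing the divergence. The divergence of a field of the form $(f,0,0)^{\top}$ is $\partial_x f$ applied to the \emph{entire} first component, not $\cos(m\pi x/a)\,\partial_x\varphi$. Here
\[
\div\Big(\cos(m\pi x/a)\,\varphi(y,z),\,0,\,0\Big)^{\top}
=\partial_x\big(\cos(m\pi x/a)\,\varphi(y,z)\big)
=-\frac{m\pi}{a}\,\sin(m\pi x/a)\,\varphi(y,z),
\]
which is not identically zero for $m\ge1$ (since $\varphi\not\equiv0$). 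So your simplified field does not belong to $\boldsymbol{\mH}(\div;0)$, and the identity $\curlvec\curlvec=-\boldsymbol{\Delta}+\nabla(\div\cdot)$ then yields
\[
\curlvec\curlvec\E_{\mrm{tr}}=\Big(\lambda_\bullet+\frac{m^2\pi^2}{a^2}\Big)\E_{\mrm{tr}}
+\nabla\Big(-\frac{m\pi}{a}\sin(m\pi x/a)\,\varphi(y,z)\Big),
\]
so the eigenvalue equation fails as well. Only the case $m=0$ survives, but that is just Proposition \ref{PropoSeparationVaria} again.

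The repair is exactly the corrector you abandoned. The paper works with
\[
\E_{\mrm{tr}}(\bfx)=\left(\begin{array}{c}
\cos(m\pi x/a)\,\varphi(y,z)\\[2pt]
-\dfrac{m\pi}{a\lambda_\bullet}\,\sin(m\pi x/a)\,\nabla_{\mrm{2D}}\varphi(y,z)
\end{array}\right),
\]
whose divergence is $-\frac{m\pi}{a}\sin(m\pi x/a)\varphi-\frac{m\pi}{a\lambda_\bullet}\sin(m\pi x/a)\Delta_{\mrm{2D}}\varphi=0$ thanks to $\Delta\varphi=-\lambda_\bullet\varphi$. This field is precisely your original ansatz with $g(x)=-\frac{m\pi}{a\lambda_\bullet}\sin(m\pi x/a)$ and $\psi=\varphi$ (up to absorbing a multiple of the product term), and the added transverse components still vanish on $(0;a)\times\partial\Om_{\mrm{2D}}$ because $\varphi=0$ there forces its tangential derivative along $\partial\Om_{\mrm{2D}}$ to vanish, while at $x=0,a$ the sine factor kills them; so the boundary condition check goes through essentially as you wrote it. Keep the corrector.
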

\begin{proof}
Let $\varphi$ be as in (\ref{PbForPhi}). Define, for $m\in\N$, the field $\E_{\mrm{tr}}\in\boldsymbol{\mL}^2(\Om)$ such that
\[
\E_{\mrm{tr}}(\bfx)=\left(\begin{array}{c}
\cos(m\pi x/a)\varphi(y,z)\\
-\cfrac{m\pi}{a\lambda_\bullet}\,\sin(m\pi x/a)\nabla_{\mrm{2D}}\varphi(y,z)
\end{array}\right)=\left(\begin{array}{c}
\cos(m\pi x/a)\varphi(y,z)\\
-(m\pi/(a\lambda_\bullet))\sin(m\pi x/a)\partial_y\varphi(y,z)\\
-(m\pi/(a\lambda_\bullet))\sin(m\pi x/a)\partial_z\varphi(y,z)
\end{array}\right).
\]
Observe that for $m=0$, this $\E_{\mrm{tr}}$ coincides with the one introduced in (\ref{DefEtr1}). One can check easily that $\div\,\E_{\mrm{tr}}=0$ in $\Om$ and $\curlvec\E_{\mrm{tr}}\in\boldsymbol{\mL}^2(\Om)$. Moreover, there holds 
\[
\boldsymbol{\Delta} \E_{\mrm{tr}}+\bigg(\lambda_\bullet+\cfrac{m^2\pi^2}{a^2}\bigg)\E_{\mrm{tr}}=0\mbox{  in }\Om. 
\] 
Using again that $\curlvec\curlvec\cdot=-\boldsymbol{\Delta}\cdot+\nabla(\div\cdot)$, we deduce that $\E_{\mrm{tr}}$ satisfies the first two lines of (\ref{DefPb0}) with $\lambda=\lambda_{\bullet}+m^2\pi^2/a^2$. On $\{\bfx\in\partial\Om\,|\,x=0\mbox{ or }x=a\}$, $\E_{\mrm{tr}}$ is normal to $\partial\Om$ because there, one has
\[
\bfnu=\left(\begin{array}{c}
\pm1\\
0\\
0
\end{array}\right)\qquad\mbox{ and }\qquad
\E_{\mrm{tr}}(\bfx)=\left(\begin{array}{c}
\pm\varphi(y,z)\\
0\\
0
\end{array}\right).
\]
This is also true on $(0;a)\times\partial\Om_{\mrm{2D}}$, because there, one finds 
\[
\E_{\mrm{tr}}(\bfx)=\left(\begin{array}{c}
0\\
-\cfrac{m\pi}{a\lambda_\bullet}\,\sin(m\pi x/a)\nabla_{\mrm{2D}}\varphi(y,z)
\end{array}\right)
\]
and $\varphi$ vanishes on $\partial\Om_{\mrm{2D}}$. Thus we have $\E_{\mrm{tr}}\times\bfnu=0$ on $\partial\Om$. This proves that $\lambda_{\bullet}+m^2\pi^2/a^2$ is an eigenvalue of $A$.
\end{proof}

\noindent This can be used to show that $A$ has an unbounded sequence of eigenvalues in the L-shaped domain represented in Figure \ref{LGeom} right. More precisely, set 
\begin{equation}\label{Lshape2D}
\Om_{\mrm{2D}}\coloneqq\{(y,z)\in(0;+\infty)^2\,|\,y<1\mbox{ or }z<1\}
\end{equation}
and define $\Om\coloneqq(0;1)\times\Om_{\mrm{2D}}$. In this geometry, Proposition \ref{DefSigmaEss} ensures that $\sigma_{\mrm{ess}}(A)=[\pi^2;+\infty)$. On the other hand, it is known from \cite{ExSt89}, \cite[Prop.\,4.1]{DaLR12} that the Dirichlet Laplacian in $\Om_{\mrm{2D}}$ has an eigenvalue $\lambda_\bullet<\pi^2$ in its discrete spectrum (see a corresponding eigenfunction in Figure \ref{LGeom} center). Numerically, for example with \texttt{Freefem++}\footnote{\url{https://freefem.org/} \cite{Hech12}.}, one obtains $\lambda_\bullet\approx 9.1722 \approx 0.9293\pi^2$. Proposition \ref{PropSeparationVaria} guarantees that for all $m\in\N$, $\lambda_{\bullet}+m^2\pi^2/a^2$ is an eigenvalue of $A$. More precisely, $\lambda_{\bullet}$ belongs to $\sigma_{\mrm{d}}(A)$ whereas $\sigma_{\mrm{emb}}(A)$ contains an unbounded sequence.

\subsection{Construction from eigenvalues of the 2D Neumann Laplacian}

\begin{figure}[!ht]
\centering
\includegraphics[width=7cm]{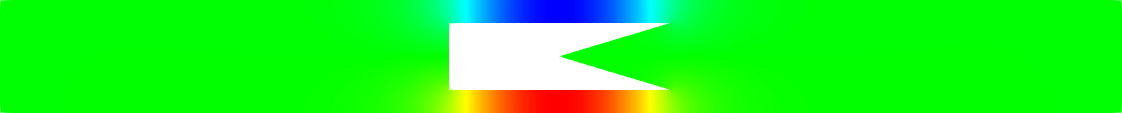}\\[12pt]
\includegraphics[trim={0cm 0cm 4cm 0cm},clip,width=7cm]{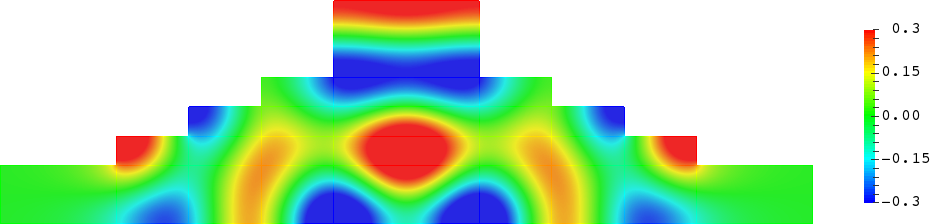}\quad\includegraphics[trim={0cm 0cm 4cm 0cm},clip,width=7cm]{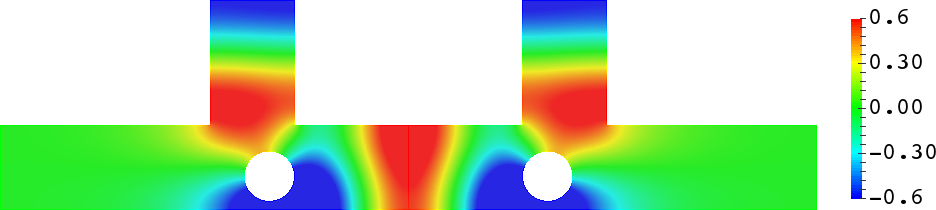}
\caption{Examples of trapped modes for the 2D Neumann Laplacian. The last two come respectively from \cite{ChPa18}, \cite{ChPa19}. \label{NeumannTrappedM}}
\end{figure}

\noindent Consider again $\Om_{\mrm{2D}}$ a domain with Lipschitz boundary which coincides with a union of semi-infinite strips outside of a bounded region. Assume this time that the geometry is such that the Neumann Laplacian in $\Om_{\mrm{2D}}$ admits an (embedded) eigenvalue $\lambda_\bullet>0$. Existence of $\lambda_\bullet$ can be shown for example in waveguides which have symmetries as in Figure \ref{NeumannTrappedM} (see \cite{EvLV94} for the classical technique as well as \cite{ChPa18,ChPa19} for other strategies). For $a>0$, define the 3D domain
\begin{equation}\label{DefOmSepVarBis}
\Om\coloneqq(0;a)\times\Om_{\mrm{2D}}.
\end{equation}
\begin{proposition}
Assume $\Om$ as in (\ref{DefOmSepVarBis}). For all $m\in\N^\ast\coloneqq\{1,2,\dots\}$, $\lambda_{\bullet}+m^2\pi^2/a^2$ is an eigenvalue of $A$ and more precisely, belongs to $\sigma_{\mrm{emb}}(A)$.
\end{proposition}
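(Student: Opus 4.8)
\noindent The plan is to imitate the construction in the proof of Proposition \ref{PropSeparationVaria}, but to replace the Dirichlet eigenfunction by a Neumann one and, crucially, to exchange the roles of $\nabla_{\mrm{2D}}$ and $\curlvec_{\mrm{2D}}$ so that the boundary condition on the lateral part $(0;a)\times\partial\Om_{\mrm{2D}}$ of $\partial\Om$ is respected. Let $\varphi\in\mH^1(\Om_{\mrm{2D}})$ be an eigenfunction of the Neumann Laplacian in $\Om_{\mrm{2D}}$ associated with $\lambda_\bullet$, so that $\Delta_{\mrm{2D}}\varphi+\lambda_\bullet\varphi=0$ in $\Om_{\mrm{2D}}$ and $\partial_{\bfnu}\varphi=0$ on $\partial\Om_{\mrm{2D}}$ (in the weak sense); since $\lambda_\bullet>0$, the function $\varphi$ is not constant. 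Writing $(y,z)$ for the coordinates in $\Om_{\mrm{2D}}$ and setting $\curlvec_{\mrm{2D}}\varphi=(\partial_z\varphi,-\partial_y\varphi)^{\top}$, I would take, for $m\in\N^\ast$, the purely transverse test field
\[
\E_{\mrm{tr}}(\bfx)=\left(\begin{array}{c}0\\ \sin(m\pi x/a)\,\curlvec_{\mrm{2D}}\varphi(y,z)\end{array}\right)=\left(\begin{array}{c}0\\ \sin(m\pi x/a)\,\partial_z\varphi(y,z)\\ -\sin(m\pi x/a)\,\partial_y\varphi(y,z)\end{array}\right).
\]

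Then I would verify, along the lines of Proposition \ref{PropSeparationVaria}, that $\E_{\mrm{tr}}\in\boldsymbol{\mL}^2(\Om)$ with $\E_{\mrm{tr}}\neq 0$ (using that $\varphi$ is non-constant and $m\ge 1$), that $\div\,\E_{\mrm{tr}}=0$ in $\Om$ (immediate since there is no $x$-component and $\div_{\mrm{2D}}\curlvec_{\mrm{2D}}\varphi=\partial_y\partial_z\varphi-\partial_z\partial_y\varphi=0$), and that the curl is square-integrable: a direct computation using $\Delta_{\mrm{2D}}\varphi=-\lambda_\bullet\varphi$ gives $\curlvec\,\E_{\mrm{tr}}=(\lambda_\bullet\sin(m\pi x/a)\,\varphi,\ (m\pi/a)\cos(m\pi x/a)\,\partial_y\varphi,\ (m\pi/a)\cos(m\pi x/a)\,\partial_z\varphi)^{\top}\in\boldsymbol{\mL}^2(\Om)$. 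Computing the (componentwise) vector Laplacian and using again $\Delta_{\mrm{2D}}\varphi=-\lambda_\bullet\varphi$ yields $\boldsymbol{\Delta}\E_{\mrm{tr}}+(\lambda_\bullet+m^2\pi^2/a^2)\E_{\mrm{tr}}=0$ in $\Om$; since $\curlvec\curlvec\cdot=-\boldsymbol{\Delta}\cdot+\nabla(\div\cdot)$ and $\div\,\E_{\mrm{tr}}=0$, this gives $\curlvec\curlvec\E_{\mrm{tr}}=(\lambda_\bullet+m^2\pi^2/a^2)\E_{\mrm{tr}}\in\boldsymbol{\mL}^2(\Om)$. For the boundary condition $\E_{\mrm{tr}}\times\bfnu=0$ on $\partial\Om$: on $\{x=0\}$ and $\{x=a\}$ it holds because $\sin(m\pi x/a)$ vanishes there, so $\E_{\mrm{tr}}=0$; on $(0;a)\times\partial\Om_{\mrm{2D}}$, the $x$-component of $\E_{\mrm{tr}}$ is zero and the transverse part is, up to the scalar factor $\sin(m\pi x/a)$, the $90^{\circ}$-rotation of $\nabla_{\mrm{2D}}\varphi$, hence parallel to $\bfnu$ because $\nabla_{\mrm{2D}}\varphi$ is tangent to $\partial\Om_{\mrm{2D}}$ by the Neumann condition; so $\E_{\mrm{tr}}$ is normal to $\partial\Om$ there. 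Altogether $\E_{\mrm{tr}}\in D(A)$ and $A\E_{\mrm{tr}}=(\lambda_\bullet+m^2\pi^2/a^2)\E_{\mrm{tr}}$, so $\lambda_\bullet+m^2\pi^2/a^2\in\sigma_{\mrm{p}}(A)$.

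It then remains to locate this eigenvalue. Each semi-infinite leg of $\Om$ is a cylinder whose cross-section is a rectangle admitting $a$ as one of its side lengths and which is, in particular, simply connected; hence the first positive Neumann eigenvalue of that cross-section is at most $\pi^2/a^2$, and, by the multi-leg version of Proposition \ref{DefSigmaEss} recalled at the end of Section \ref{SectionSetting}, one has $\sigma_{\mrm{ess}}(A)=[\lambda_N;+\infty)$ with $\lambda_N\le\pi^2/a^2$. Therefore, for every $m\in\N^\ast$, $\lambda_\bullet+m^2\pi^2/a^2\ge\pi^2/a^2\ge\lambda_N$, so the eigenvalue belongs to $\sigma_{\mrm{ess}}(A)$ and thus to $\sigma_{\mrm{emb}}(A)$. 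This also makes clear why one restricts to $m\ge 1$: for $m=0$ the field $\E_{\mrm{tr}}$ vanishes identically.

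I do not expect a genuine obstacle here: the form of the test field is essentially forced, since the transverse part must be divergence free in the two transverse variables and normal to $\partial\Om_{\mrm{2D}}$, which singles out $\curlvec_{\mrm{2D}}\varphi$ rather than $\nabla_{\mrm{2D}}\varphi$ and forbids a nonzero $x$-component (contrary to the Dirichlet case of Proposition \ref{PropSeparationVaria}). The only mildly delicate point is the rigorous justification that $\E_{\mrm{tr}}\times\bfnu=0$ on the lateral boundary follows from the weak Neumann condition on $\varphi$, but this is a standard tangential-trace argument, handled exactly as in the proofs of Propositions \ref{PropoSeparationVaria} and \ref{PropSeparationVaria}.
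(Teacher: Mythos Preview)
Your proposal is correct and follows essentially the same approach as the paper: the test field $\E_{\mrm{tr}}=(0,\sin(m\pi x/a)\curlvec_{\mrm{2D}}\varphi)^{\top}$ is identical, the verifications (divergence-free, $\curlvec\curlvec$-eigenvalue equation, boundary condition via $\sin$ vanishing at $x\in\{0,a\}$ and $\partial_{\bfnu}\varphi=0$ on the lateral boundary) match, and your embeddedness argument via $\lambda_N\le\pi^2/a^2$ is equivalent to the paper's use of $\sigma_{\mrm{ess}}(A)=[\min(\pi^2/H_{\mrm{max}}^2,\pi^2/a^2);+\infty)$.
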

\begin{proof}
Let $\varphi\in\mH^1(\Om_{\mrm{2D}})$ be an eigenfunction 
of the Neumann Laplacian associated with $\lambda_\bullet$, \textit{i.e.} a function such that
\[
\begin{array}{|rcll}
\Delta\varphi+\lambda_\bullet\varphi&=&0&\mbox{ in }\Om_{\mrm{2D}}\\[2pt]
\partial_{\boldsymbol{n}}\varphi&=&0& \mbox{ on }\partial\Om_{\mrm{2D}},
\end{array}
\]
where $\boldsymbol{n}$ denotes the outward normal unit vector to $\partial\Om_{\mrm{2D}}$. Define, for $m\in\N^\ast$,
\[
\E_{\mrm{tr}}(\bfx)=\left(\begin{array}{c}
0\\
\sin(m\pi x/a)\curlvec_{\mrm{2D}}\,\varphi(y,z)
\end{array}\right)=\left(\begin{array}{c}
0\\
\phantom{-}\sin(m\pi x/a)\partial_z\varphi(y,z)\\
-\sin(m\pi x/a)\partial_y\varphi(y,z)
\end{array}\right).
\]
One can check as in the proof of Proposition \ref{PropSeparationVaria} that $\E_{\mrm{tr}}$ belongs to $\boldsymbol{\mX}_N(\Om)$ and satisfies (\ref{DefPb0}) for $\lambda=\lambda_{\bullet}+m^2\pi^2/a^2$. Let us simply detail the boundary conditions. On $\partial\Om$, one finds
\[
\E_{\mrm{tr}}\times\bfnu=\left(\begin{array}{c}
\sin(m\pi x/a)(\partial_y\varphi\,\nu_y+\partial_z\varphi\,\nu_z)\\
-\sin(m\pi x/a)\partial_y\varphi\,\nu_x\\
-\sin(m\pi x/a)\partial_z\varphi\,\nu_x
\end{array}\right).
\]
Thus, we have $\E_{\mrm{tr}}\times\bfnu=0$ on $(0;a)\times\partial\Om_{\mrm{2D}}$ because $\partial_{\boldsymbol{n}}\varphi=\nu_x=0$ there. And there holds $\E_{\mrm{tr}}=0$, and so $\E_{\mrm{tr}}\times\bfnu=0$, at $x=0$ and at $x=a$ because $\sin(m\pi x/a)=0$ there. Finally, this shows that for all $m\in\N^\ast$, $\lambda_{\bullet}+m^2\pi^2/a^2$ is an eigenvalue of $A$. Since $\sigma_{\mrm{ess}}(A)=[\min(\pi^2/H^2_\mrm{max},\pi^2/a^2);+\infty)$, these eigenvalues are all embedded in $\sigma_{\mrm{ess}}(A)$.
\end{proof}

\begin{remark}\label{RmkFourierSeries}
Denote generically here by $\lambda_D(0;a)/\lambda_N(0;a)$ (resp. $\lambda_D(\Om_{\mrm{2D}})/\lambda_N(\Om_{\mrm{2D}})$) the eigenvalues of the Dirichlet/Neumann Laplacian in $(0;a)$ (resp. $\Om_{\mrm{2D}}$). Above, we proved that all the quantities 
\begin{equation}\label{FormEigProduct}
\lambda_D(\Om_{\mrm{2D}})+\lambda_N(0;a),\qquad\qquad\lambda_N(\Om_{\mrm{2D}})+\lambda_D(0;a),
\end{equation}
are eigenvalues of $A$. Using decomposition in Fourier series in the $x$ direction, one can show that actually all the eigenvalues of $A$ write as (\ref{FormEigProduct}). Thus we retrieve a result established in bounded domains in the work \cite{zbMATH07224760} concerning Maxwell's eigenvalues in product domains.
\end{remark}

\section{Waveguides with separation of variables in the resonator}\label{Section1}

In the previous section it was important to have separation of variables in the whole waveguide. Here we give examples of geometries where we have separation of variables only in the resonator and where $\sigma_{\mrm{d}}(A)\ne\emptyset$. For these waveguides, we cannot construct explicitly trapped modes for $A$. To prove the existence of eigenvalues, we will work with the min-max principle.

\subsection{Strategy}\label{paraStrategy}

Assume that
\begin{equation}\label{HypoOm}
\mbox{$\Om$ is as in (\ref{DefOm1})--(\ref{DefOm2}) and $S$, the cross-section of $\Pi$, is simply connected.} 
\end{equation}
Since $A$ is self-adjoint, according to the min-max principle (cf. \cite[Thm.\,10.2.2]{BiSo87}), we know that there holds
\begin{equation}\label{MaxMinFormula}
\inf\,\sigma(A)=\underset{\E\in\boldsymbol{\mX}_N(\Om)\setminus\{0\}}{\inf}\cfrac{\dsp\int_{\Om}|\curlvec\E|^2\,d\bfx}{\dsp\int_{\Om}|\E|^2\,d\bfx}\,.
\end{equation}
Combining Proposition \ref{DefSigmaEss} and (\ref{MaxMinFormula}), we infer that if we are able to exhibit some $\E\in\boldsymbol{\mX}_N(\Om)\setminus\{0\}$ such that
\begin{equation}\label{RelationsMinMax}
\cfrac{\dsp\int_{\Om}|\curlvec\E|^2\,d\bfx}{\dsp\int_{\Om}|\E|^2\,d\bfx}<\lambda_N\qquad\Leftrightarrow\qquad \dsp\int_{\Om}|\curlvec\E|^2\,d\bfx-\lambda_N\int_{\Om}|\E|^2\,d\bfx<0,
\end{equation}
then necessarily $\sigma_{\mrm{d}}(A)\ne\emptyset$. This is a traditional strategy to prove existence of discrete spectrum for self-adjoint operators. The question now is: how to build \textit{ad hoc} test fields? A natural idea is to work with 
\begin{equation}\label{DefEpGe}
\E_p=\begin{array}{|cl}
\E_\mathcal{R} & \mbox{ in }\mathcal{R}\\
0 & \mbox{ in }\Pi
\end{array}
\end{equation}
(``p'' like particular) where $\E_\mathcal{R}$ is an eigenfunction of the resonator problem
\begin{equation}\label{ResPb}
\begin{array}{|rcll}
\curlvec\curlvec\E_\mathcal{R} &= & \lambda_\mathcal{R}\E_\mathcal{R} & \mbox{ in }\mathcal{R}\\[3pt]
\E_\mathcal{R}\times\bfnu & =&0 &  \mbox{ on }\partial\mathcal{R}.
\end{array}
\end{equation}
Then we would obtain
\[
\cfrac{\dsp\int_{\Om}|\curlvec\E_p|^2\,d\bfx}{\dsp\int_{\Om}|\E_p|^2\,d\bfx}=\cfrac{\dsp\int_{\mathcal{R}}|\curlvec\E_\mathcal{R}|^2\,d\bfx}{\dsp\int_{\Om_\mathcal{R}}|\E_\mathcal{R}|^2\,d\bfx}=\lambda_\mathcal{R}
\]
and if $\lambda_\mathcal{R}<\lambda_N$, one could conclude to the existence of discrete spectrum for $A$. The problem here is that though there holds $\E_p\in\boldsymbol{\mH}_N(\curlvec)$, in general $\div\,\E_p\ne0$ in $\Om$ except if 
\begin{equation}\label{ResBC}
\E_\mathcal{R}\cdot\bfnu=0\quad\mbox{ on }\partial\mathcal{R}\cap\partial\Pi.
\end{equation}
Therefore in general, some field $\E_p$ defined as in (\ref{DefEpGe}) does not belong to $\boldsymbol{\mX}_N(\Om)$. However in this section we will see that if $\mathcal{R}$ is a product domain (see examples in Figure \ref{Marteau}), then condition (\ref{ResBC}) can be satisfied.

\begin{figure}[!ht]
\centering
\includegraphics[width=7cm,trim={6cm 0cm 3cm 0cm},clip]{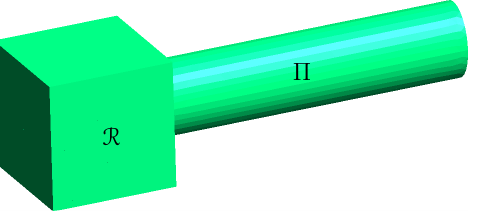}
\includegraphics[width=6.8cm,trim={5.7cm 0cm 3.4cm 0cm},clip]{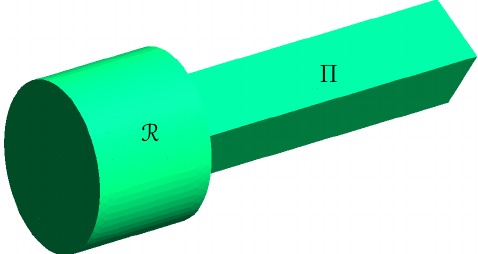}
\caption{Waveguides where the resonator is a product domain.\label{Marteau}}
\end{figure}

\subsection{A first simple case}

Assume first that the resonator $\mathcal{R}$ coincides with the cuboid such that for $a,b,L>0$, 
\begin{equation}\label{DefRCuboid}
\mathcal{R}=(-a/2;a/2)\times(-b/2;b/2)\times(-L;0)
\end{equation}
(see Figure \ref{Marteau} left). Define the field $\E_{p}$ as in (\ref{DefEpGe}) with 
\begin{equation}\label{DefEpPart}
\E_{\mathcal{R}}(\bfx)=\left(\begin{array}{c}
0\\
\cos(\pi x/a)\sin(\pi z/L)\\
0
\end{array}\right)\mbox{ in }\mathcal{R}.
\end{equation}
The crucial point here is that one has $\E_{\mathcal{R}}=0$ at $z=0$, which ensures that condition (\ref{ResBC}) is satisfied because $\partial\mathcal{R}\cap\partial\Pi\subset \{\bfx\in\R^3\,|\,z=0\}$. Besides, one has $\E_{\mathcal{R}}=0$, and so $\E_{\mathcal{R}}\times\bfnu=0$, at $x=\pm a/2$ and at $z=-L$. At $y=\pm b/2$, there holds $\bfnu=(0,\pm1,0)^{\top}$ so that $\E_{\mathcal{R}}$ is also normal to the boundary there. This is enough to conclude that $\E_{\mathcal{R}}\times\bfnu=0$ on $\partial\mathcal{R}$. All in all, this shows that $\E_{p}\in\boldsymbol{\mX}_N(\Om)$. A direct calculation gives
\[
\cfrac{\dsp\int_{\Om}|\curlvec\E_{p}|^2\,d\bfx}{\dsp\int_{\Om}|\E_{p}|^2\,d\bfx}=\cfrac{\pi^2}{a^2}+\cfrac{\pi^2}{L^2}\,.
\]
Therefore we have the following result:
\begin{proposition}\label{PropoCasPart}
Assume $\Om$ as in (\ref{HypoOm}) with $\mathcal{R}$ as in (\ref{DefRCuboid}). If $\pi^2/a^{2}+\pi^2/L^{2}<\lambda_N$, then there holds $\sigma_{\mrm{d}}(A)\ne\emptyset$. 
\end{proposition}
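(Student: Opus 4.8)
The plan is to apply the min-max strategy of \S\ref{paraStrategy} directly, using the explicit test field $\E_p$ built from $\E_{\mathcal{R}}$ in (\ref{DefEpPart}). Since the preceding discussion already verified that $\E_p\in\boldsymbol{\mX}_N(\Om)$ (the key point being $\E_{\mathcal{R}}=0$ on $z=0$, so $\E_{\mathcal{R}}\cdot\bfnu=0$ on $\partial\mathcal{R}\cap\partial\Pi$ and hence $\div\,\E_p=0$ across the interface, together with $\E_{\mathcal{R}}\times\bfnu=0$ on the remaining faces of $\partial\mathcal{R}$), and the Rayleigh quotient has been computed to equal $\pi^2/a^2+\pi^2/L^2$, the argument is essentially a one-line invocation of (\ref{RelationsMinMax}).

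In detail, first I would recall that by (\ref{MaxMinFormula}),
\[
\inf\,\sigma(A)\le\cfrac{\dsp\int_{\Om}|\curlvec\E_{p}|^2\,d\bfx}{\dsp\int_{\Om}|\E_{p}|^2\,d\bfx}=\cfrac{\pi^2}{a^2}+\cfrac{\pi^2}{L^2}.
\]
Under the hypothesis $\pi^2/a^{2}+\pi^2/L^{2}<\lambda_N$, this gives $\inf\,\sigma(A)<\lambda_N$. Next I would invoke Proposition \ref{DefSigmaEss}: since $S$ is simply connected (part of (\ref{HypoOm})), $\sigma_{\mrm{ess}}(A)=[\lambda_N;+\infty)$, so $\inf\,\sigma_{\mrm{ess}}(A)=\lambda_N$. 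Consequently $\inf\,\sigma(A)<\inf\,\sigma_{\mrm{ess}}(A)$, which forces $\sigma(A)$ to contain a point strictly below the essential spectrum; such a point is necessarily an isolated eigenvalue of finite multiplicity, i.e.\ it lies in $\sigma_{\mrm{d}}(A)$. Hence $\sigma_{\mrm{d}}(A)\ne\emptyset$.

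The only genuine verification in this proposition is the membership $\E_p\in\boldsymbol{\mX}_N(\Om)$ and the evaluation of the Rayleigh quotient, and both have already been carried out in the paragraph preceding the statement; so in the write-up I would simply point back to that computation rather than redo it. There is no real obstacle here: the ``hard part'', if any, is purely bookkeeping — namely being careful that the zero extension $\E_p$ does not introduce a spurious divergence or a tangential jump at the interface $z=0$, which is exactly why the factor $\sin(\pi z/L)$ (vanishing at $z=0$) was chosen in (\ref{DefEpPart}). Once that is granted, the conclusion is immediate from the min-max principle together with Proposition \ref{DefSigmaEss}.
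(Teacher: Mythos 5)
Your proposal is correct and follows exactly the paper's argument: the paper proves this proposition by the computation carried out in the paragraph immediately preceding the statement (verification that the zero-extended field $\E_p$ built from (\ref{DefEpPart}) lies in $\boldsymbol{\mX}_N(\Om)$, evaluation of its Rayleigh quotient as $\pi^2/a^2+\pi^2/L^2$), followed by the min-max principle (\ref{MaxMinFormula}) and Proposition \ref{DefSigmaEss}. Nothing is missing.
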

\noindent Thus for any cylinder $\Pi$ with simply connected cross section $S$, $A$ has a non-empty discrete spectrum for $a$, $L$ large enough (independently of $b$).

\subsection{Generalization: working with TE modes in the resonator}\label{paraGene}

\noindent This construction can be generalized to resonators of the form 
\begin{equation}\label{DefSections}
\mathcal{R}=S_{\mathcal{R}}\times(-L;0),
\end{equation}
where $S_{\mathcal{R}}$ is a bounded domain of $\R^2$ with Lipschitz boundary (see Figure \ref{Marteau} right for a case where $S_{\mathcal{R}}$ is a disk). Denote by $\lambda_N(S_{\mathcal{R}})$ the first positive eigenvalue of the 2D Neumann Laplacian in $S_{\mathcal{R}}$. 
\begin{proposition}\label{PropoTMmodes}
Assume $\Om$ as in (\ref{HypoOm}) with $\mathcal{R}$ as in (\ref{DefSections}). If $\lambda_N(S_{\mathcal{R}})+\pi^2/L^2<\lambda_N$, then there holds $\sigma_{\mrm{d}}(A)\ne\emptyset$. 
\end{proposition}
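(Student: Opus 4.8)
The plan is to mimic the strategy of Proposition~\ref{PropoCasPart}, replacing the explicit cuboid eigenfunction \eqref{DefEpPart} by a field built from the first Neumann eigenfunction of the cross-section $S_{\mathcal{R}}$. Let $\psi\in\mH^1(S_{\mathcal{R}})$ be an eigenfunction of the 2D Neumann Laplacian in $S_{\mathcal{R}}$ associated with $\lambda_N(S_{\mathcal{R}})$, so that $\Delta_{\mrm{2D}}\psi+\lambda_N(S_{\mathcal{R}})\psi=0$ in $S_{\mathcal{R}}$ with $\partial_{\boldsymbol{n}}\psi=0$ on $\partial S_{\mathcal{R}}$ (here $(x,y)$ are the cross-section variables and $z$ the axial one). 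Following the form of the TE mode \eqref{DefTEmodes}, I would set
\[
\E_{\mathcal{R}}(\bfx)=\left(\begin{array}{c}\curlvec_{\mrm{2D}}\psi(x,y)\\0\end{array}\right)\sin(\pi z/L)\qquad\mbox{ in }\mathcal{R},
\]
that is $\E_{\mathcal{R}}=(\partial_y\psi,-\partial_x\psi,0)^{\top}\sin(\pi z/L)$, and then define $\E_p$ as in \eqref{DefEpGe}, extending $\E_{\mathcal{R}}$ by $0$ in $\Pi$.

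The verification then proceeds in the same three checks as before. First, $\E_p\in\boldsymbol{\mH}(\div;0)$: one computes $\div\,\E_{\mathcal{R}}=\sin(\pi z/L)\,\div_{\mrm{2D}}(\curlvec_{\mrm{2D}}\psi)=0$ in $\mathcal{R}$, and the matching condition \eqref{ResBC} holds on $\partial\mathcal{R}\cap\partial\Pi\subset\{z=0\}$ because $\sin(\pi z/L)$ vanishes at $z=0$ (so $\E_{\mathcal{R}}\cdot\bfnu=0$ there), hence the zero extension keeps $\div\,\E_p=0$ across the interface in the distributional sense. Second, $\E_p\in\boldsymbol{\mH}_N(\curlvec)$: on the lateral boundary $\partial S_{\mathcal{R}}\times(-L;0)$ the outward normal is $\bfnu=(n_x,n_y,0)^{\top}$ with $(n_x,n_y)=\boldsymbol{n}$, and $\E_{\mathcal{R}}\times\bfnu$ has components involving $-n_x\partial_x\psi\,(\pm)\dots$; collecting terms, the tangential trace reduces to the quantity $\nabla_{\mrm{2D}}\psi\cdot\boldsymbol{n}=\partial_{\boldsymbol{n}}\psi=0$ on $\partial S_{\mathcal{R}}$, so $\E_{\mathcal{R}}\times\bfnu=0$ there; at $z=0$ and $z=-L$ one has $\sin(\pi z/L)=0$ so $\E_{\mathcal{R}}=0$. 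Thus $\E_p\in\boldsymbol{\mX}_N(\Om)\setminus\{0\}$ (it is nonzero since $\psi$ is a nonconstant Neumann eigenfunction, so $\curlvec_{\mrm{2D}}\psi\not\equiv0$). Third, the Rayleigh quotient: using $\curlvec\curlvec\cdot=-\boldsymbol{\Delta}\cdot+\nabla(\div\cdot)$ and $\div\,\E_{\mathcal{R}}=0$ one gets $\curlvec\curlvec\E_{\mathcal{R}}=-\boldsymbol{\Delta}\E_{\mathcal{R}}=(\lambda_N(S_{\mathcal{R}})+\pi^2/L^2)\E_{\mathcal{R}}$ in $\mathcal{R}$, and an integration by parts (legitimate because the boundary traces above vanish) yields
\[
\cfrac{\dsp\int_{\Om}|\curlvec\E_p|^2\,d\bfx}{\dsp\int_{\Om}|\E_p|^2\,d\bfx}=\cfrac{\dsp\int_{\mathcal{R}}|\curlvec\E_{\mathcal{R}}|^2\,d\bfx}{\dsp\int_{\mathcal{R}}|\E_{\mathcal{R}}|^2\,d\bfx}=\lambda_N(S_{\mathcal{R}})+\cfrac{\pi^2}{L^2}\,.
\]
Invoking \eqref{MaxMinFormula} and Proposition~\ref{DefSigmaEss} (which gives $\inf\sigma_{\mrm{ess}}(A)=\lambda_N$ since $S$ is simply connected by \eqref{HypoOm}), the hypothesis $\lambda_N(S_{\mathcal{R}})+\pi^2/L^2<\lambda_N$ forces $\inf\sigma(A)<\inf\sigma_{\mrm{ess}}(A)$, hence $\sigma_{\mrm{d}}(A)\neq\emptyset$.

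I expect the only delicate point to be the bookkeeping for the tangential boundary condition on the lateral wall — confirming that the three components of $\E_{\mathcal{R}}\times\bfnu$ all collapse to a multiple of $\partial_{\boldsymbol{n}}\psi$ — together with the justification that zero-extension of $\E_{\mathcal{R}}$ across $\{z=0\}$ stays in $\boldsymbol{\mH}_N(\curlvec)\cap\boldsymbol{\mH}(\div;0)$; this last point is exactly where the vanishing of $\sin(\pi z/L)$ at $z=0$ is used, ensuring both the tangential trace of $\curlvec\E_{\mathcal{R}}$ and the normal trace of $\E_{\mathcal{R}}$ match the zero field from the $\Pi$ side. Everything else is a routine separation-of-variables computation parallel to Propositions~\ref{PropoCasPart} and \ref{PropSeparationVaria}.
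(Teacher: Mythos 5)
Your proposal is correct and is essentially identical to the paper's own proof: the paper takes exactly the same test field $\E_{\mathcal{R}}=(\curlvec_{\mrm{2D}}\varphi_N,0)^{\top}\sin(\pi z/L)$ extended by zero into $\Pi$, verifies membership in $\boldsymbol{\mX}_N(\Om)$ via the vanishing of $\sin(\pi z/L)$ at $z=0$ and the Neumann condition on $\partial S_{\mathcal{R}}$, and computes the same Rayleigh quotient $\lambda_N(S_{\mathcal{R}})+\pi^2/L^2$ before invoking the min-max principle. Your extra bookkeeping (the tangential trace collapsing to $\partial_{\boldsymbol{n}}\psi$, the distributional divergence across $\{z=0\}$) is accurate and simply fills in details the paper leaves to the reader.
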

\begin{proof}
The idea is to work with the TE modes in the resonator (see (\ref{DefTEmodes}) for their definition in $\Pi$). For $\lambda\in(\lambda_N(S_{\mathcal{R}});\lambda_N)$, there are propagating TE modes in the resonator but not in the cylinder $\Pi$. By combining two propagating TE modes in $\mathcal{R}$, we can create a field that vanishes at $z=0$ and that we can extend by zero. More precisely, define $\E_{p}$ as in (\ref{DefEpGe}) with
\begin{equation}\label{DefEModeTEbis}
\E_{\mathcal{R}}(\bfx)= \left(\begin{array}{c}
\curlvec_{\mrm{2D}}\,\varphi_N(x,y)\\
0
\end{array}\right)\sin(\pi z/L)
\mbox{ in }\mathcal{R}.
\end{equation}
Here $\varphi_N$ is an eigenfunction of the Neumann Laplacian in $S_{\mathcal{R}}$ associated with $\lambda_N(S_{\mathcal{R}})$. Let us emphasize that (\ref{DefEpPart}) is a particular case of such construction with $\varphi_N(x,y)=\sin(\pi x/a)$ (up to a multiplicative constant). Since $\E_{\mathcal{R}}=0$ at $z=0$, condition (\ref{ResBC}) is satisfied. Moreover, one can check that $\E_{\mathcal{R}}\times\bfnu=0$ on $\partial\mathcal{R}$. Thus $\E_{p}$ belongs to $\boldsymbol{\mX}_N(\Om)$. By a direct calculation, we obtain
\[
\cfrac{\dsp\int_{\Om}|\curlvec\E_{p}|^2\,d\bfx}{\dsp\int_{\Om}|\E_{p}|^2\,d\bfx}=\lambda_N(S_{\mathcal{R}})+\cfrac{\pi^2}{L^2}\,.
\]
With the min-max principle (\ref{MaxMinFormula}), this gives the conclusion of the proposition.
\end{proof}
\noindent Let us mention that with this approach we can find geometries such that the total multiplicity of $\sigma_{\mrm{d}}(A)$ is as large as desired. To proceed it suffices to work with $S_{\mathcal{R}}$ large enough so that the Neumann Laplacian in $S_{\mathcal{R}}$ has several eigenvalues below $\lambda_N$. Then with the corresponding eigenfunctions, as in (\ref{DefEModeTEbis}), we can create families of linearly independent fields of $\boldsymbol{\mX}_N(\Om)$ such that (\ref{RelationsMinMax}) is satisfied for $L$ large enough, which is enough to conclude with the min-max principle.

\subsection{Absence of monotonicity of the spectrum with respect to the geometry}\label{AbsenceOfMono}

It is known that there is no monotonicity result for the spectrum of Maxwell's operator with respect to the geometric inclusion in bounded domains. More precisely, we can have $\om_1\subset\om_2$ with $\inf\,\sigma(A_1)<\inf\,\sigma(A_2)$, where $\sigma(A_i)$ denotes the spectrum of the operator (\ref{DefOpA}) in the geometry $\om_i$, $i=1,2$. One can check that this occurs for example for the two domains illustrated in Figure \ref{NoMinMaxBounded}. For related questions, we also refer the reader to \cite{KrLZ25} where the authors show that the problems of minimization and maximization of the first eigenvalue of the vector Laplacian under perimeter or volume constraints are degenerate. 

\begin{figure}[!ht]
\centering
\begin{tikzpicture}[scale=1]
\draw[fill=blue!20,draw=blue](0,0) rectangle (6,2);
\draw[fill=blue!20,draw=blue] (6,0)--(7,1)--(7,3)--(6,2)--cycle;
\draw[fill=blue!20,draw=blue] (7,3)--(6,2)--(0,2)--(1,3)--cycle;
\draw[fill=none,draw=blue,dashed] (0,0)--(1,1)--(7,1);
\draw[fill=none,draw=blue,dashed] (1,3)--(1,1);

\draw[fill=red!40,opacity=0.4,draw=red] (0.08,0)--(6,1.8)--(5.92,2)--(0,0.2)--cycle;
\draw[fill=red!40,opacity=0.4,draw=red] (6,1.8)--(5.92,2)--(6.92,3)--(7,2.8)--cycle;
\draw[fill=red!40,opacity=0.4,draw=red] (0,0.2)--(5.92,2)--(6.92,3)--(1,1.2)--cycle;
\draw[fill=none,draw=red,dashed] (0.08,0)--(1.08,1)--(7,2.8);
\draw[fill=none,draw=red,dashed] (1.08,1)--(1,1.2);
\node at (-0.4,1) {$\textcolor{blue}{\om_2}$};
\node at (-0.4,0.2) {$\textcolor{red}{\om_1}$};
\end{tikzpicture}
\caption{Classical examples of bounded domains $\om_1\subset\om_2$ such that $\inf\,\sigma(A_1)<\inf\,\sigma(A_2)$.\label{NoMinMaxBounded}}
\end{figure}
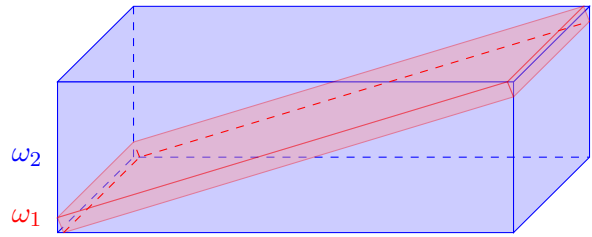

\noindent In the present paragraph, we show a similar property for the unbounded waveguides we consider in this work. To proceed, we construct two domains $\Om_1$, $\Om_2$ with $\Om_1\subset\Om_2$ such that  $\inf \sigma(A_1) <\inf \sigma(A_2)$.\\

\begin{figure}[!ht]
\centering
\includegraphics[width=7.5cm]{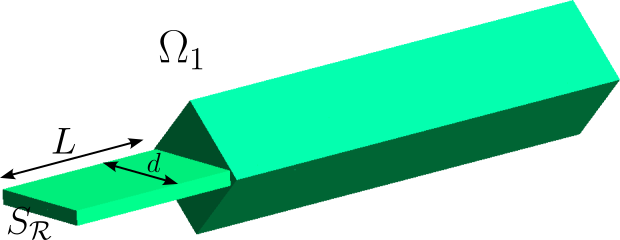}\qquad \includegraphics[width=7.5cm]{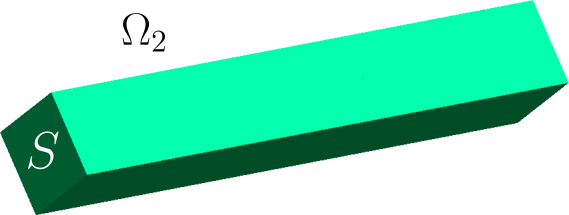}
\caption{Domains $\Om_1$ (left) and $\Om_2$ (right). Though $\Om_1\subset\Om_2$, we have $\inf \sigma(A_1) <\inf \sigma(A_2)$.\label{DefDomains}}
\end{figure}

\noindent Consider $d\in(1;\sqrt{2})$ and introduce $\eps>0$ small enough such that $\overline{S_\mathcal{R}}\subset S$, where
\[
S_\mathcal{R}\coloneqq(-d/2;d/2)\times(-\eps/2;\eps/2),\qquad 
S\coloneqq\{(x,y)\in\R^2\,|\,|x|+|y|<\sqrt{2}/2\}.
\]
For $L>0$, set
\[
\Om_1\coloneqq S_\mathcal{R}\times(-L;0]\cup S\times(0;+\infty),\qquad \Om_2\coloneqq S\times(-L;+\infty)
\]
(see Figure \ref{DefDomains}). Working with a symmetry argument as in Section \ref{SectionSym}, one can show that the operator associated with (\ref{DefPb0}) in $\Om_2$ has only essential spectrum coinciding with $[\pi^2;+\infty)$. Thus, there holds $\sigma(A_2)=[\pi^2;+\infty)$. Now Proposition \ref{PropoCasPart} ensures that if $\pi^2/d^{2}+\pi^2/L^{2}<\pi^2$, which is true for $L$ large enough, then there holds $\sigma_{\mrm{d}}(A_1)\ne\emptyset$. In that case, we have $\inf \sigma(A_1) <\pi^2=\inf \sigma(A_2)$.

\subsection{Working with TEM modes in the resonator}\label{paraTEM}

\begin{figure}[!ht]
\centering
\includegraphics[width=6cm]{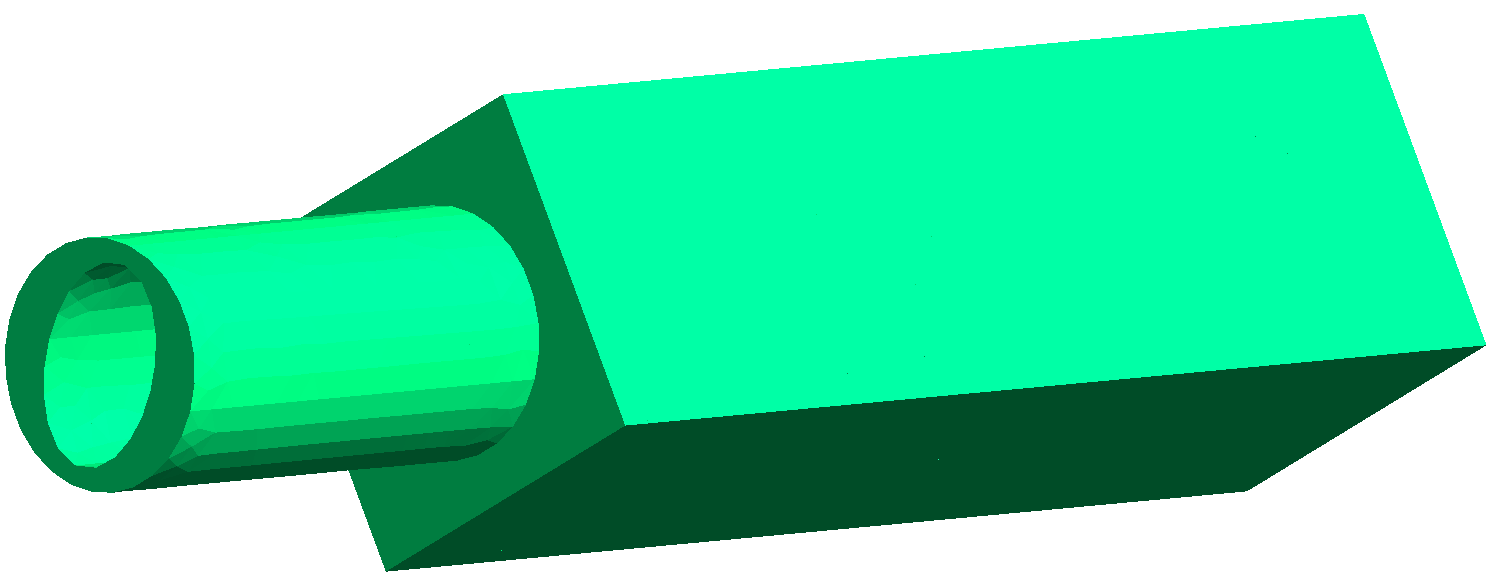}\qquad
\begin{tikzpicture}[scale=1.3]
\draw[draw=black,fill=gray!30] (0,0) circle (0.9);
\draw[draw=none,fill=gray!30] (0,0) circle (0.89);
\draw [draw=black,fill=white](0,0) ellipse (0.56  and 0.84);
\node at (-0.72,0) {$S_\circ$};
\end{tikzpicture}
\caption{Domain $\Om$ (left) with a resonator having a non-simply connected section $S_\circ$ (right). \label{CoaxialTEM}}
\end{figure}

\noindent With the approach presented in \S\ref{paraGene}, we have to compare the first positive eigenvalues of the Neumann Laplacians in $S_\mathcal{R}$ and in $S$ to claim that $\sigma_{\mrm{d}}(A)\ne\emptyset$. For generic sections $S_\mathcal{R}$ and $S$, this requires some numerical computations. Here we show that by working with some $S_\mathcal{R}$ which are not simply connected, we can ensure that $\sigma_{\mrm{d}}(A)\ne\emptyset$ for $L$ large enough, independently of the geometries of the sections. This is directly related to the well-known fact that if the section of $\mathcal{R}$ is not simply connected, propagating TEM modes always exist in $\mathcal{R}$. \\
\newline 
Let $\tilde{S}\subset \R^2$ be a bounded simply connected domains with Lipschitz boundary. For $\mathcal{O}$ a simply connected domain with Lipschitz boundary such that $\overline{\mathcal{O}}\subset \tilde{S}$, define 
\[
S_\circ\coloneqq\tilde{S}\setminus\overline{\mathcal{O}}
\]
(Figure \ref{CoaxialTEM} right). Note that $S_\circ$ is not simply connected. Let us work with a resonator of the form 
\begin{equation}\label{DefSectionsNotSC}
\mathcal{R}=S_\circ\times(-L;0)
\end{equation}
(Figure \ref{CoaxialTEM} left). For $\lambda>0$, introduce the TEM mode
\[
\E^{\mrm{TEM}}_\pm(\bfx)=\left( \begin{array}{c}
\nabla \varphi(x,y) \\[2pt]
0
\end{array}
\right)e^{\pm i\sqrt{\lambda}z},
\]
where $\varphi$ is the function of $\mH^1(S_\circ)$ such that 
\begin{equation}\label{DefPhi}
\begin{array}{|rccl}
\Delta\varphi&=&0 & \mbox{ in }S_\circ\\[2pt]
\varphi&=&1 & \mbox{ on }\partial\mathcal{O}\\[2pt]
\varphi&=&0 & \mbox{ on }\partial \tilde{S}.
\end{array}
\end{equation}
One can check that $\E^{\mrm{TEM}}_\pm$ solves (\ref{DefPb0}) in $\mathcal{R}$.

\begin{proposition}
Assume $\Om$ as in (\ref{HypoOm}) with $\mathcal{R}$ as in (\ref{DefSectionsNotSC}). If $\pi^2/L^2<\lambda_N$, then there holds $\sigma_{\mrm{d}}(A)\ne\emptyset$. 
\end{proposition}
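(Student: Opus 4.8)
The plan is to mimic the proof of Proposition \ref{PropoTMmodes}, using a TEM mode of the resonator in place of a TE mode. Since (\ref{HypoOm}) holds and $S$ is simply connected, Proposition \ref{DefSigmaEss} gives $\sigma_{\mrm{ess}}(A)=[\lambda_N;+\infty)$, so by the min-max principle (\ref{MaxMinFormula}) it suffices to exhibit a single field $\E\in\boldsymbol{\mX}_N(\Om)\setminus\{0\}$ with $\int_\Om|\curlvec\E|^2\,d\bfx<\lambda_N\int_\Om|\E|^2\,d\bfx$. The mechanism is that, $S_\circ$ being not simply connected, propagating TEM modes exist in $\mathcal{R}$ at \emph{every} frequency $\lambda>0$, whereas for $\lambda<\lambda_N$ no mode propagates in $\Pi$; combining the two TEM modes at $\lambda=\pi^2/L^2$ produces a field of $\mathcal{R}$ that vanishes on $\partial\mathcal{R}\cap\partial\Pi$ and can therefore be extended by zero.

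Concretely, I would take $\E_p$ as in (\ref{DefEpGe}) with
\[
\E_{\mathcal{R}}(\bfx)=\left(\begin{array}{c}\nabla\varphi(x,y)\\0\end{array}\right)\sin(\pi z/L)\quad\mbox{ in }\mathcal{R},
\]
where $\varphi\in\mH^1(S_\circ)$ is the (unique, by Lax--Milgram) solution of (\ref{DefPhi}); it is non-constant, hence $\nabla\varphi\not\equiv0$ and $\E_p\neq0$. One then checks $\E_p\in\boldsymbol{\mX}_N(\Om)$: the divergence-free condition follows from $\div\,\E_{\mathcal{R}}=(\Delta\varphi)\sin(\pi z/L)=0$ together with the vanishing of the third component of $\E_{\mathcal{R}}$, which gives (\ref{ResBC}) on $\partial\mathcal{R}\cap\partial\Pi\subset\{z=0\}$ so that the zero extension stays divergence free; the membership $\curlvec\E_p\in\boldsymbol{\mL}^2(\Om)$ follows from $\nabla\varphi\in\mL^2(S_\circ)$ and the fact that the tangential trace of $\E_{\mathcal{R}}$ vanishes at $z=0$; and the boundary condition $\E_{\mathcal{R}}\times\bfnu=0$ on $\partial\mathcal{R}$ holds because $\E_{\mathcal{R}}=0$ at $z=0$ and $z=-L$, while on the lateral wall $\partial S_\circ\times(-L;0)$ one has $\bfnu=(\nu_x,\nu_y,0)^{\top}$ and $\E_{\mathcal{R}}$ is parallel to $\bfnu$ there since $\varphi$ is constant on each connected component of $\partial S_\circ$ (so its tangential derivative along $\partial S_\circ$ vanishes and $\nabla\varphi$ is normal to the wall).

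It then remains to evaluate the Rayleigh quotient. A direct computation gives $\curlvec\E_{\mathcal{R}}=-(\curlvec_{\mrm{2D}}\varphi,0)^{\top}(\pi/L)\cos(\pi z/L)$, so that $|\curlvec\E_{\mathcal{R}}|^2=(\pi/L)^2|\nabla\varphi|^2\cos^2(\pi z/L)$ and $|\E_{\mathcal{R}}|^2=|\nabla\varphi|^2\sin^2(\pi z/L)$; integrating first in $z$ over $(-L;0)$ yields
\[
\cfrac{\dsp\int_{\Om}|\curlvec\E_p|^2\,d\bfx}{\dsp\int_{\Om}|\E_p|^2\,d\bfx}=\cfrac{\pi^2}{L^2}\,.
\]
Since $\pi^2/L^2<\lambda_N$ by assumption, (\ref{MaxMinFormula}) gives $\inf\sigma(A)<\lambda_N=\inf\sigma_{\mrm{ess}}(A)$, hence $\sigma_{\mrm{d}}(A)\neq\emptyset$. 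I expect the only delicate point to be the rigorous verification that the zero extension of $\E_{\mathcal{R}}$ belongs to $\boldsymbol{\mX}_N(\Om)$, i.e. the careful handling of the tangential and normal traces on the Lipschitz boundary $\partial\mathcal{R}$ with only $\varphi\in\mH^1(S_\circ)$; everything else is the same routine verification carried out in \S\ref{paraGene}.
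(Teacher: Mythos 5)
Your proposal is correct and is exactly the paper's proof: the same test field $\E_{\mathcal{R}}=(\nabla\varphi,0)^{\top}\sin(\pi z/L)$ extended by zero, the same verification that it lies in $\boldsymbol{\mX}_N(\Om)$ (with the key observation that $\varphi$ is constant on each component of $\partial S_\circ$, so $\nabla\varphi$ is normal to the lateral wall), and the same Rayleigh quotient computation giving $\pi^2/L^2<\lambda_N$. The paper merely states "one can check as above" where you have written out the trace and divergence verifications explicitly.
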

\begin{proof}
Define the field $\E_{p}$ as in (\ref{DefEpGe}) with 
\[
\E_\mathcal{R}(\bfx)= \left( \begin{array}{c}
\nabla \varphi(x,y) \\[2pt]
0
\end{array}
\right)\sin(\pi z/L),
\]
$\varphi$ being the function defined in (\ref{DefPhi}). One can check as above that $\E_p\in\boldsymbol{\mX}_N(\Om)$. Moreover, we have 
\[
 \int_{\Om} |\curlvec\E_p|^2\,d\bfx = \cfrac{\pi^2}{L^2}\int_{\Om} |\E_p|^2\,d\bfx .
\]
Again with the min-max principle (\ref{MaxMinFormula}), we obtain the conclusion of the proposition.
\end{proof}
\noindent Let us formulate two comments concerning this result. First, note that the domain $\Om$ represented in Figure \ref{CoaxialTEM} offers another example showing that monotonicity of the spectrum with respect to the geometric inclusion does not hold for Maxwell's operator. Second, observe that only the length of the resonator matters in the upper bound for the eigenvalue in the discrete spectrum of $A$, and not the area of $S_\circ$ (in particular the hole in $S_\circ$ can be as small as desired). This example where the topology plays a key role is very specific to electromagnetism and has no equivalent for Dirichlet/Neumann Laplacians.

\subsection{Working with TM modes in the resonator}\label{ParaTMmodes}

In \S\ref{paraGene} (resp. \S\ref{paraTEM}), we exploited TE (resp. TEM) modes of the resonator to derive a criterion ensuring the existence of discrete spectrum for $A$. One may wonder how to proceed with TM modes in $\mathcal{R}$. We will see that the approach must be adapted. Consider a resonator as in (\ref{DefSections}), \textit{i.e.} of the form 
\[
\mathcal{R}=S_{\mathcal{R}}\times(-L;0).
\]
Denote by $\lambda_D(S_{\mathcal{R}})$ the first eigenvalue of the Dirichlet Laplacian in $S_{\mathcal{R}}$. By combining two propagating TM modes in $\mathcal{R}$ (see (\ref{DefTMmodes}) for their definition in $\Pi$), it would be natural to try to work with the field $\hat{\E}_p$ such that 
\[ 
\hat{\E}_p(\bfx)=\left(\begin{array}{c}\nabla \varphi_D(x,y)\sin(\pi z/L)\\[2pt]-\alpha \varphi_D(x,y)\cos(\pi z/L)\end{array}\right)\mbox{ in }\mathcal{R}.
\]
Here $\alpha\coloneqq L\lambda_D(S_{\mathcal{R}})/\pi$ and $\varphi_D$ is an eigenfunction of the Dirichlet Laplacian in $S_{\mathcal{R}}$ associated with $\lambda_D(S_{\mathcal{R}})$. However, since the third component of $\hat{\E}_p$ does not vanish at $z=0$, extension by zero is not allowed in $\boldsymbol{\mX}_N(\Om)$. To circumvent this difficulty, let us remove to $\hat{\E}_p$ a divergence free (in $\mathcal{R}$) field which has zero tangential trace on $\partial\mathcal{R}\cap\partial\Om$ and which has the same trace as $\hat{\E}_p$ at $z=0$. This leads us to work with $\E_{p}$ such that
\begin{equation}\label{DefEpTM}
\E_{p}(\bfx)= \left(\begin{array}{c}
\nabla \varphi_D(x,y)\sin(\pi z/L)\\[2pt]
\alpha\varphi_D(x,y)(1-\cos(\pi z/L))
\end{array}\right)
\mbox{ in }\mathcal{R},\qquad \E_{p}(\bfx)=\left(\begin{array}{c}
0\\
0\\
0
\end{array}\right)\mbox{ in }\Pi.
\end{equation}
Observe that $\E_{p}=0$ at $z=0^\pm$. Moreover, we have $\div\,\E_{p}=0$ in $\Om$ and $\E_{p}\times\bfnu=0$ on $\partial\Om$, which shows that $\E_{p}\in\boldsymbol{\mX}_N(\Om)$. We find
\[
\curlvec\E_{p}(\bfx)= \left(\begin{array}{c}
\phantom{-}\partial_y\varphi_D(\alpha-\cos(\pi z/L)(\alpha+\pi/L))\\[2pt]
-\partial_x\varphi_D(\alpha-\cos(\pi z/L)(\alpha+\pi/L))\\[2pt]
0
\end{array}\right)\mbox{ in }\mathcal{R}.
\]
Assuming that $\int_{S_{\mathcal{R}}}\varphi_D^2\,dxdy=1$, we obtain
\[
\begin{array}{rcl}
\dsp\int_{\Om} |\curlvec\E_{p}|^2\,d\bfx&=&\dsp\lambda_D(S_{\mathcal{R}})\int_{-L}^0(\alpha-\cos(\pi z/L)(\alpha+\pi/L))^2\,dz\\[10pt]
&=&\cfrac{3 L^3(\lambda_D(S_{\mathcal{R}}))^3}{2\pi^2} + L\lambda_D(S_{\mathcal{R}})^2 + \cfrac{\pi^2\lambda_D(S_{\mathcal{R}})}{2L}
\end{array}
\]
\[
\int_{\Om} |\E_{p}|^2\,d\bfx=\int_{-L}^0\lambda_D(S_{\mathcal{R}})\sin(\pi z/L)^2+\alpha^2(1-\cos(\pi z/L))^2\,dz=\cfrac{3 L^3(\lambda_D(S_{\mathcal{R}}))^2}{2\pi^2} + \cfrac{L\lambda_D(S_{\mathcal{R}})}{2}\,.
\]
Taking the limit $L\to+\infty$ in the quotient of these two quantities, with the min-max principle (\ref{MaxMinFormula}), we establish the following result.
\begin{proposition}
Assume $\Om$ as in (\ref{HypoOm}) with $\mathcal{R}$ as in (\ref{DefSections}). If $\lambda_D(S_{\mathcal{R}})<\lambda_N$, then for $L$ large enough there holds $\sigma_{\mrm{d}}(A)\ne\emptyset$. 
\end{proposition}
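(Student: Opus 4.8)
The plan is to feed into the min-max formula (\ref{MaxMinFormula}) the test field $\E_p$ built in (\ref{DefEpTM}) and to let the length $L$ of the resonator tend to $+\infty$. The first thing I would check carefully is that $\E_p$ is an admissible competitor, \textit{i.e.} $\E_p\in\boldsymbol{\mX}_N(\Om)$. Since $\Delta\varphi_D=-\lambda_D(S_{\mathcal{R}})\varphi_D$ in $S_{\mathcal{R}}$ and $\alpha=L\lambda_D(S_{\mathcal{R}})/\pi$, a one-line computation gives $\div\,\E_p=0$ in $\mathcal{R}$; the tangential trace of $\E_p$ vanishes on $\partial S_{\mathcal{R}}\times(-L;0)$ because $\varphi_D=0$ there (so its longitudinal component is zero and $\nabla\varphi_D$ is parallel to the outward normal of $\partial S_{\mathcal{R}}$) and on $S_{\mathcal{R}}\times\{-L\}$ because the transverse part carries the factor $\sin(\pi z/L)$. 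Crucially, both $\sin(\pi z/L)$ and $1-\cos(\pi z/L)$ vanish at $z=0$, so $\E_p$ has no jump across the interface $\partial\mathcal{R}\cap\partial\Pi$; hence its extension by zero into $\Pi$ stays in $\boldsymbol{\mH}_N(\curlvec)$ and remains divergence free in all of $\Om$. This is exactly why one replaces the naive combination $\hat{\E}_p$ of the two TM modes (\ref{DefTMmodes}) by the corrected field whose longitudinal component is $\alpha\varphi_D(1-\cos(\pi z/L))$ rather than $-\alpha\varphi_D\cos(\pi z/L)$.

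Once admissibility is granted, the Rayleigh quotient is a purely computational matter: the expression of $\curlvec\E_p$ in $\mathcal{R}$ is the planar curl of $\varphi_D$ times a function of $z$ alone, so using $\int_{S_{\mathcal{R}}}|\nabla\varphi_D|^2=\lambda_D(S_{\mathcal{R}})\int_{S_{\mathcal{R}}}\varphi_D^2$ and normalising $\int_{S_{\mathcal{R}}}\varphi_D^2=1$, both numerator and denominator reduce to elementary integrals over $z\in(-L;0)$, giving
\[
\int_{\Om}|\curlvec\E_p|^2\,d\bfx=\frac{3L^3\lambda_D(S_{\mathcal{R}})^3}{2\pi^2}+L\lambda_D(S_{\mathcal{R}})^2+\frac{\pi^2\lambda_D(S_{\mathcal{R}})}{2L},\qquad\int_{\Om}|\E_p|^2\,d\bfx=\frac{3L^3\lambda_D(S_{\mathcal{R}})^2}{2\pi^2}+\frac{L\lambda_D(S_{\mathcal{R}})}{2}.
\]
Dividing and keeping the leading $L^3$ terms, the quotient converges to $\lambda_D(S_{\mathcal{R}})$ as $L\to+\infty$.

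Finally I would conclude in the classical way: since by assumption $\lambda_D(S_{\mathcal{R}})<\lambda_N$, there is $L_0>0$ such that for every $L>L_0$ the Rayleigh quotient of $\E_p$ is strictly below $\lambda_N$; by (\ref{RelationsMinMax}) together with Proposition \ref{DefSigmaEss} (which, under the standing hypothesis (\ref{HypoOm}), gives $\sigma_{\mrm{ess}}(A)=[\lambda_N;+\infty)$), this yields $\inf\sigma(A)<\inf\sigma_{\mrm{ess}}(A)$, so $A$ has spectrum below its essential spectrum and $\sigma_{\mrm{d}}(A)\ne\emptyset$. I expect the only genuinely delicate step to be the first one — shaping the $z$-profile so that $\E_p$ itself, not merely its normal trace, vanishes at $z=0$ while staying divergence free, which is what forces the particular form of the longitudinal component and the value of $\alpha$; everything after that is a one-dimensional integration and an elementary asymptotics in $L$.
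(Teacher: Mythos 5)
Your proposal is correct and follows essentially the same route as the paper: it uses the corrected TM-type test field (\ref{DefEpTM}), verifies membership in $\boldsymbol{\mX}_N(\Om)$ via the vanishing of both components at $z=0$, and reproduces the paper's exact values for $\int_{\Om}|\curlvec\E_p|^2\,d\bfx$ and $\int_{\Om}|\E_p|^2\,d\bfx$ before letting $L\to+\infty$ and invoking the min-max principle. Nothing to add.
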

\begin{remark}
It is interesting to compare this result with the one provided by Proposition \ref{PropoTMmodes}. From \cite{Filo05}, as already mentioned, we know that $\lambda_N(S_{\mathcal{R}})<\lambda_D(S_{\mathcal{R}})$. On the other hand, it is easy to show that the $\E^{\mrm{TE}}_{p}$ defined in (\ref{DefEModeTEbis}) and the $\E^{\mrm{TM}}_{p}$ defined in (\ref{DefEpTM}) are linearly independent in $\boldsymbol{\mX}_N(\Om)$. With the min-max principle we deduce that if $\lambda_D(S_{\mathcal{R}})<\lambda_N$, then for $L$ large enough, the total multiplicity of $\sigma_{\mrm{d}}(A)$ is at least two. 
\end{remark}
\begin{remark}
Let us mention that the method we use in this paragraph to find relevant test fields differs from the strategy presented in \S\ref{paraStrategy}. Indeed, the restriction to $\mathcal{R}$ of the $\E_p$ defined in (\ref{DefEpTM}) is not an eigenfunction of the resonator problem (\ref{ResPb}) but a combination of eigenfunctions associated with different eigenvalues.
\end{remark}

\noindent Let us conclude this section by observing that the assumption made on the geometry can be relaxed. More precisely, we worked in (\ref{DefSections}) with a resonator of the form $\mathcal{R}=S_{\mathcal{R}}\times(-L;0)$. Since we use extension by zero, everything presented above can be adapted to situations where only a part of the resonator coincides, in \textit{ad hoc} coordinates, with a cylinder. This is the case for example in the domain represented in Figure \ref{GeomComplex}.

\begin{figure}[!ht]
\centering
\includegraphics[width=8cm]{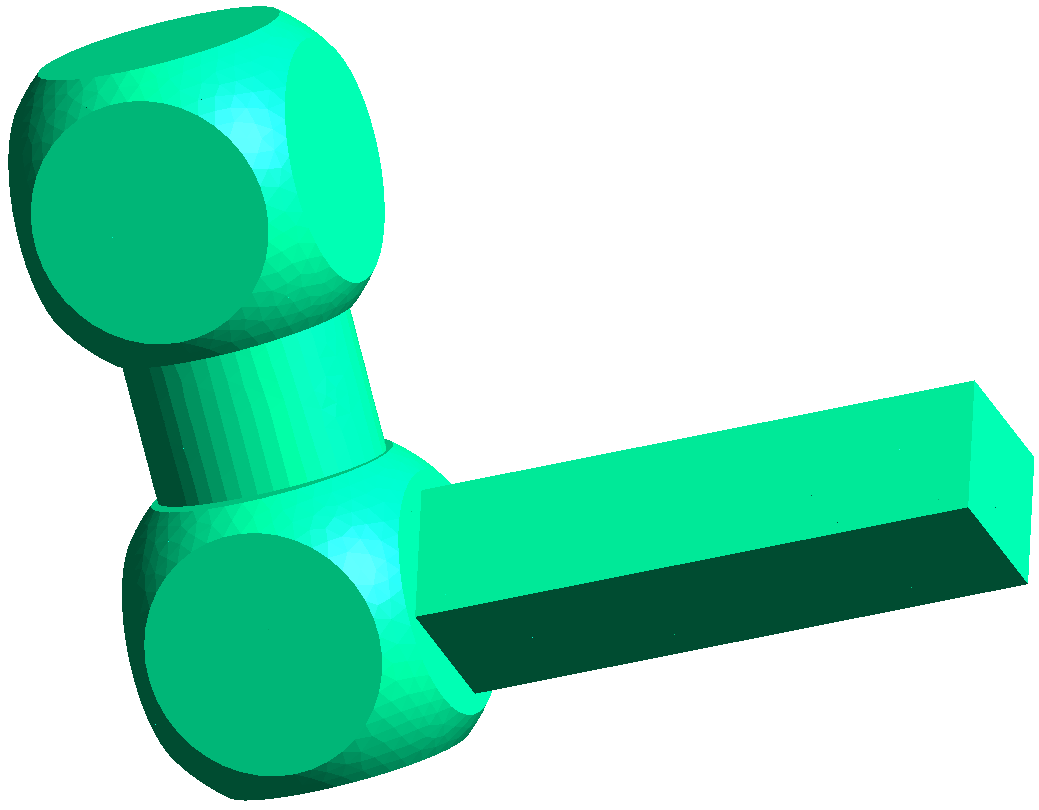}
\hspace{-13cm}\raisebox{2.3cm}{\begin{tikzpicture}
\draw[->,thick] (-1,0)--(0,0);
\node at (-3.8,0) {\begin{tabular}{l|}Part of the resonator which\\ coincides with a cylinder.\end{tabular}};
\end{tikzpicture}\hspace{2.5cm}}
\caption{Domain $\Om$ for which there is no complete separation of variables in the resonator. \label{GeomComplex}}
\end{figure}

\section{Waveguides without separation of variables}\label{Section6Legs}
In the previous two sections, we exploited in a crucial way separation of variables to prove our results of existence of eigenvalues. Here we consider other geometries where we have to proceed differently. We will work again with the min-max principle but this time the test fields will not be compactly supported. 

\subsection{Existence of discrete spectrum for resonators large enough}\label{ParaBigRes}

In \S\ref{AbsenceOfMono}, we illustrated the absence of monotonicity of the spectrum of $A$ with respect to the geometric inclusion. Here we show however that if the resonator $\mathcal{R}$ of $\Om$ is large enough, then for sure $A$ admits a non-empty discrete spectrum. To proceed, we adapt an idea proposed by J. Rohleder in \cite{Rohl24}. In this work, he demonstrates in particular that if $\Om$ is a bounded domain of $\R^3$ with Lipschitz boundary, then Maxwell's operator has at least two eigenvalues strictly below the first eigenvalue of the Dirichlet Laplacian in $\Om$. We establish here a similar result in waveguides. Since we have monotonicity of the spectrum of the Dirichlet Laplacian with respect to the geometry\footnote{Meaning that $\Om_1\subset\Om_2$ implies $\inf \sigma(-\Delta_D(\Om_2) \le \inf \sigma(-\Delta_D(\Om_1))$, where $-\Delta_D(\Om_i)$ is the Dirichlet Laplacian in $\Om_i$, $i=1,2$.}, this allows us to obtain a criterion of existence of discrete spectrum for Maxwell's operator in a broad class of waveguides.\\
\newline 
Consider $\Om$ as in (\ref{HypoOm}). Recall that we denote by $\lambda_N$ the first positive eigenvalue of the Neumann Laplacian in $S$.
\begin{theorem}\label{ThmBigRes}
Assume $\Om$ as in (\ref{HypoOm}) and that the (scalar) Dirichlet Laplacian in $\Om$ has a non-empty point spectrum. Let $\lambda_D(\Om)$ stand for its smallest eigenvalue. If $\lambda_D(\Om)<\lambda_N$, then there holds $\sigma_{\mrm{d}}(A)\ne\emptyset$.
\end{theorem}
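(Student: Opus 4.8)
The plan is to exploit the min-max principle \eqref{MaxMinFormula} and produce, from an eigenfunction of the scalar Dirichlet Laplacian in $\Om$, a divergence-free test field whose Rayleigh quotient equals $\lambda_D(\Om)<\lambda_N$. Let $u\in\mH^1_0(\Om)$ be an eigenfunction of the Dirichlet Laplacian in $\Om$ associated with $\lambda_D(\Om)$, so that $\Delta u+\lambda_D(\Om)u=0$ in $\Om$ and $u=0$ on $\partial\Om$. Since $\lambda_D(\Om)<\lambda_N$ and the TM and TE modes in $\Pi$ propagate only for $\lambda>\lambda_D,\lambda>\lambda_N$ respectively, and since the scalar Dirichlet eigenvalue lies strictly below $\lambda_N\le\lambda_D(S)$, one expects $u$ to decay exponentially as $z\to+\infty$; I would first record this so that all the integrals below are finite and $u\in\mL^2(\Om)$ with $\nabla u\in\boldsymbol{\mL}^2(\Om)$.

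Following Rohleder's idea (\cite{Rohl24}), the natural candidate is a field built by taking curls of $u$ against the coordinate directions. Concretely I would set, for a fixed unit vector $\boldsymbol{e}$ (say $\boldsymbol{e}=\boldsymbol{e}_x$ and/or $\boldsymbol{e}_y$),
\[
\E:=\curlvec(u\,\boldsymbol{e})=\nabla u\times\boldsymbol{e}.
\]
By construction $\div\,\E=0$ in $\Om$, so $\E\in\boldsymbol{\mH}(\div;0)$. Next one computes $\curlvec\E=\curlvec\curlvec(u\boldsymbol{e})=-\boldsymbol{\Delta}(u\boldsymbol{e})+\nabla(\div(u\boldsymbol{e}))=\lambda_D(\Om)\,u\boldsymbol{e}+\nabla(\partial_{\boldsymbol{e}}u)$, using $\Delta u=-\lambda_D(\Om)u$. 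The boundary condition $\E\times\bfnu=0$ on $\partial\Om$ has to be checked: on $\partial\Om$ one has $u=0$, hence the tangential part of $\nabla u$ vanishes and $\nabla u=(\partial_{\bfnu}u)\bfnu$, so $\E=\nabla u\times\boldsymbol{e}=(\partial_{\bfnu}u)\,\bfnu\times\boldsymbol{e}$, which gives $\E\times\bfnu=(\partial_{\bfnu}u)(\bfnu\times\boldsymbol{e})\times\bfnu=(\partial_{\bfnu}u)(\boldsymbol{e}-(\boldsymbol{e}\cdot\bfnu)\bfnu)$; this need not be zero, so $\E$ itself is not admissible. The fix, exactly as in the scalar-to-Maxwell constructions above, is to subtract a gradient: take $\E_p:=\curlvec(u\boldsymbol{e})-\nabla w$ where $w\in\mH^1(\Om)$ solves an auxiliary problem (a Neumann-type problem $\Delta w=0$ in $\Om$ with $\nabla w\cdot\bfnu=\E\cdot\bfnu$ on $\partial\Om$, or more simply one chooses $w$ with the right tangential trace) so that $\E_p\in\boldsymbol{\mH}_N(\curlvec)$ while preserving $\div\,\E_p=0$. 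Then $\curlvec\E_p=\curlvec\E$, so the numerator is unchanged.

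With an admissible $\E_p\in\boldsymbol{\mX}_N(\Om)$ in hand, the remaining task is the Rayleigh-quotient estimate: one shows
\[
\int_\Om|\curlvec\E_p|^2\,d\bfx-\lambda_D(\Om)\int_\Om|\E_p|^2\,d\bfx<0,
\]
equivalently $\le 0$ with strictness, and then \eqref{RelationsMinMax} together with $\lambda_D(\Om)<\lambda_N$ and Proposition \ref{DefSigmaEss} forces $\sigma_{\mrm{d}}(A)\ne\emptyset$. The key algebraic identity is that, after integrating by parts (here the decay at $z\to+\infty$ and the boundary conditions are used), $\int_\Om|\curlvec(u\boldsymbol{e})|^2 = \lambda_D(\Om)\int_\Om|\curlvec(u\boldsymbol{e})|^2/\lambda_D(\Om)\cdots$; more precisely one wants to show $\int_\Om|\curlvec\E_p|^2=\lambda_D(\Om)\int_\Om|\nabla u\times\boldsymbol{e}|^2 - (\text{positive gradient correction})$ so that the quotient is exactly $\lambda_D(\Om)$ minus a nonnegative term. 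I expect the computation of $\int_\Om|\curlvec\E|^2$ to reduce, via $\curlvec\curlvec(u\boldsymbol{e})=\lambda_D(\Om)u\boldsymbol{e}+\nabla(\partial_{\boldsymbol e}u)$ and an integration by parts against $\curlvec(u\boldsymbol e)$, to $\lambda_D(\Om)\|\curlvec(u\boldsymbol e)\|^2$ plus a boundary/gradient term that is favorable; summing over $\boldsymbol{e}\in\{\boldsymbol e_x,\boldsymbol e_y\}$ (or choosing $\boldsymbol e$ so that the resulting field is nonzero — note $\curlvec(u\boldsymbol e)\equiv 0$ would force $u$ independent of the two directions orthogonal to $\boldsymbol e$, impossible for an $\mL^2$ Dirichlet eigenfunction in a genuinely 3D waveguide) one obtains a nonzero admissible field with quotient $<\lambda_N$.

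The main obstacle, I expect, is the correction term $\nabla w$: one must check simultaneously that $w$ exists in $\mH^1(\Om)$ on this \emph{unbounded} domain (which is where the exponential decay of $u$ and elliptic estimates enter), that subtracting $\nabla w$ repairs the tangential trace without destroying $\div\E_p=0$, and — most delicately — that this correction does not spoil the strict inequality in the Rayleigh quotient. A clean way to handle the last point is to argue that the gradient correction only \emph{decreases} $\int_\Om|\E_p|^2$ relative to the naive quotient, or to work in the $L\to\infty$ (large-resonator) regime where the decaying eigenfunction $u$ is concentrated in $\mathcal R$ and the correction terms are negligible; since the statement is qualitative ($\sigma_{\mrm d}(A)\ne\emptyset$), an asymptotic argument à la Proposition \ref{PropoTMmodes} suffices. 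I would present the proof by first treating the numerator identity abstractly, then disposing of the gradient correction, then concluding with \eqref{RelationsMinMax}.
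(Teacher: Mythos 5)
Your overall strategy (build a divergence-free test field from a Dirichlet eigenfunction $u$ of $-\Delta$ in $\Om$ and feed it to the min-max principle) is the right one, but the concrete construction you propose has a gap that I do not think can be repaired, and it is not the construction the paper uses.

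The problem is your choice $\E=\curlvec(u\boldsymbol{e})=\nabla u\times\boldsymbol{e}$ followed by the attempt to restore the tangential boundary condition by subtracting a gradient. As you correctly compute, on $\partial\Om$ one has $\E=(\partial_{\bfnu}u)\,\bfnu\times\boldsymbol{e}$, which is a purely \emph{tangential} field; so the defect to be removed is the tangential trace, not the normal one, and the auxiliary Neumann problem you write down ($\Delta w=0$, $\nabla w\cdot\bfnu=\E\cdot\bfnu$) addresses the wrong trace (indeed $\E\cdot\bfnu=0$ already). More fundamentally, the tangential trace of any gradient $\nabla w$ is the surface gradient of $w|_{\partial\Om}$, hence an exact (closed) tangential field on $\partial\Om$; the tangential field $(\partial_{\bfnu}u)\,\bfnu\times\boldsymbol{e}$ is not a surface gradient in general, so no choice of $w$ can cancel it. Your Rayleigh-quotient estimate is also only conjectural: integrating $\int_\Om|\curlvec\E|^2$ by parts against $\E$ produces boundary terms of the form $\int_{\partial\Om}(\bfnu\times\curlvec\E)\cdot\E$ whose sign is not controlled, and the "favorable gradient correction'' is asserted rather than proved. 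Finally, the fallback to a large-resonator asymptotic regime would not prove the theorem as stated, which holds for any $\Om$ with $\lambda_D(\Om)<\lambda_N$.

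The paper's proof inverts your two constraints: it starts from $\Phi\boldsymbol{e}_x$ (where $\Phi$ is the Dirichlet eigenfunction), which already satisfies $\E\times\bfnu=0$ because $\Phi$ vanishes on all of $\partial\Om$, and then repairs the \emph{divergence} by subtracting $\nabla\psi$ with $\psi\in\mH^1_0(\Om)$ solving $\Delta\psi=\partial_x\Phi$. This works because a gradient of an $\mH^1_0$ function has zero tangential trace (so the boundary condition is preserved) and its divergence can be prescribed freely via a coercive Dirichlet problem (Poincar\'e inequality on the unbounded $\Om$). The identities $\int_\Om|\curlvec\E_p|^2=\int_\Om|\nabla\Phi|^2-\int_\Om(\Delta\psi)^2$ and $\int_\Om|\E_p|^2=\int_\Om\Phi^2-\int_\Om|\nabla\psi|^2$ then reduce everything to the key spectral inequality $\int_\Om(\Delta\zeta)^2\ge\lambda_D(\Om)\int_\Om|\nabla\zeta|^2$ for $\zeta\in\mH^1_0(\Om)$ with $\Delta\zeta\in\mL^2(\Om)$ (Lemma \ref{EstimMin}), which controls the correction term exactly and yields the Rayleigh quotient $\le\lambda_D(\Om)<\lambda_N$. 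Nothing in your proposal plays the role of this lemma. If you want to salvage your approach, the lesson is: choose the ansatz so that the boundary condition is automatic and correct the divergence, not the other way around.
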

\begin{remark}
Note that since $\lambda_N$ is less than the first eigenvalue of the Dirichlet Laplacian in $S$, $\lambda_D(\Om)$ is necessarily in the discrete spectrum of $-\Delta_D(\Om)$.
\end{remark}
\begin{remark}
The first eigenvalue of the Dirichlet Laplacian in the cube $\mathcal{C}\coloneqq(0;a)^3$, $a>0$, is equal to $3\pi^2/a^2$. As a consequence, if $\Om$ is such that there is a rigid transformation $T$ such that $T(\mathcal{C})\subset\Om$ and $3\pi^2/a^2<\lambda_N$, then Theorem \ref{ThmBigRes} together with the min-max principle for the Dirichlet Laplacian guarantee that $\sigma_{\mrm{d}}(A)\ne\emptyset$.
\end{remark}
\begin{proof}
Let $\Phi\in\mH^1_0(\Om)$ be an eigenfunction of the Dirichlet Laplacian in $\Om$ associated with $\lambda_D(\Om)$, \textit{i.e.} a non-zero function such that 
\[
-\Delta \Phi= \lambda_D(\Om)\Phi\qquad\mbox{ in }\Om.
\]
Define the field $\E_p$ such that
\[
\E_p=\left(\begin{array}{c}
\Phi\\
0\\
0
\end{array}\right)-\nabla\psi
\]
where $\psi$ is the function of $\mH^1_0(\Om)$ such that 
\begin{equation}\label{DefFunPsi}
\Delta \psi=\div \left(\begin{array}{c}
\Phi\\
0\\
0
\end{array}\right)=\partial_x\Phi.
\end{equation}
Let us emphasize that the Poincar\'e inequality
\begin{equation}\label{PoincareIneq}
\int_\Om \phi^2\,d\bfx \le C_P \int_\Om|\nabla \phi|^2\,d\bfx,\qquad\forall\phi\in\mH^1_0(\Om),
\end{equation}
with $C_P>0$, guarantees that $\psi$ is well-defined. From (\ref{DefFunPsi}), we obtain
\begin{equation}\label{Identity}
\dsp\int_{\Om}|\nabla\psi|^2\,d\bfx=\dsp\int_{\Om}\Phi\partial_x\psi\,d\bfx.
\end{equation}
Using in particular that $\Phi$ vanishes on $\partial\Om$, one can check that $\E_p$ belongs to $\boldsymbol{\mX}_N(\Om)$. Moreover, by exploiting (\ref{DefFunPsi}),  (\ref{Identity}), one finds
\begin{equation}\label{CalculCurl}
\dsp\int_{\Om}|\curlvec\E_p|^2\,d\bfx = \dsp\int_{\Om}(\partial_y\Phi)^2+(\partial_z\Phi)^2\,d\bfx = \dsp\int_{\Om}|\nabla\Phi|^2\,d\bfx - \dsp\int_{\Om}(\Delta\psi)^2\,d\bfx
\end{equation}
as well as 
\begin{equation}\label{CalculNormL2}
\dsp\int_{\Om}|\E_p|^2\,d\bfx =\dsp\int_{\Om}\Phi^2+|\nabla\psi|^2-2\Phi\partial_x\psi\,d\bfx=\dsp\int_{\Om}\Phi^2-|\nabla\psi|^2\,d\bfx.
\end{equation}
From (\ref{Identity}), using the Cauchy-Schwarz inequality, we obtain 
\[
\dsp\int_{\Om}|\nabla\psi|^2\,d\bfx \le \|\Phi\|_{\mL^2(\Om)}\|\partial_x\psi\|_{\mL^2(\Om)} <\|\Phi\|_{\mL^2(\Om)}\|\nabla\psi\|_{\boldsymbol{\mL}^2(\Om)}.
\]
To get the strict inequality, we exploit that $\partial_z\psi\not\equiv0$ otherwise $\psi$ would be null. This gives 
\[
\dsp\int_{\Om}|\nabla\psi|^2\,d\bfx < \dsp\int_{\Om}\Phi^2 \,d\bfx
\]
and guarantees that $\E_p$ is not identically zero. Define the space $\mH^1_0(\Delta;\Om)\coloneqq\{\zeta\in\mH^1_0(\Om)\,|\,\Delta\zeta\in\mL^2(\Om)\}$. It is clear that $\psi$ belongs to $\mH^1_0(\Delta;\Om)$. Below, we shall prove the following statement
\begin{lemma}\label{EstimMin}
We have 
\[
\int_{\Om}(\Delta\zeta)^2\,d\bfx \ge \lambda_D(\Om)\int_{\Om}|\nabla\zeta|^2\,d\bfx\qquad\mbox{ for all }\zeta\in\mH^1_0(\Delta;\Om).
\]
\end{lemma}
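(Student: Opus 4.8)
The plan is to establish the inequality by a spectral (Fourier) decomposition argument. Denote by $-\Delta_D(\Om)$ the Dirichlet Laplacian in $\Om$, a self-adjoint operator in $\mL^2(\Om)$, with $\lambda_D(\Om)$ its smallest element of the spectrum. The key observation is that for $\zeta\in\mH^1_0(\Delta;\Om)$ one has, after integration by parts (legitimate since $\zeta$ vanishes on $\partial\Om$ and $\Delta\zeta\in\mL^2(\Om)$),
\[
\int_{\Om}(\Delta\zeta)^2\,d\bfx \ge \lambda_D(\Om)\int_{\Om}(-\Delta\zeta)\,\zeta\,d\bfx = \lambda_D(\Om)\int_{\Om}|\nabla\zeta|^2\,d\bfx,
\]
where the middle inequality is exactly the statement that $-\Delta_D(\Om)\ge\lambda_D(\Om)$ applied to the function $-\Delta\zeta$, i.e.\ that $\|{-}\Delta\zeta\|^2_{\mL^2(\Om)}=\langle (-\Delta)(-\Delta\zeta),\zeta\rangle$ dominates $\lambda_D(\Om)\langle (-\Delta)\zeta,\zeta\rangle$ once one writes both sides in terms of the spectral measure of $-\Delta_D(\Om)$. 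Concretely, if $\{(\mu_k,\phi_k)\}$ is an orthonormal spectral resolution of $-\Delta_D(\Om)$ (either a basis of eigenfunctions when the spectrum is discrete enough, or more generally the spectral theorem) and $\zeta=\sum_k c_k\phi_k$, then $\int_\Om|\nabla\zeta|^2 = \sum_k \mu_k |c_k|^2$ and $\int_\Om(\Delta\zeta)^2 = \sum_k \mu_k^2|c_k|^2$, so the claim reduces to the elementary pointwise bound $\mu_k^2 \ge \lambda_D(\Om)\,\mu_k$ for every $k$, which holds because $\mu_k\ge\lambda_D(\Om)>0$.

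First I would justify the integration by parts: for $\zeta\in\mH^1_0(\Om)$ with $\Delta\zeta\in\mL^2(\Om)$, a density argument (approximating $\zeta$ in the graph norm of $-\Delta_D(\Om)$ by elements of $D(-\Delta_D(\Om))$, or simply by $C^\infty_0(\Om)$ in a suitable topology) gives $\int_\Om |\nabla\zeta|^2\,d\bfx = -\int_\Om \zeta\,\Delta\zeta\,d\bfx$. Then I would invoke the min-max / Rayleigh-quotient characterization of $\lambda_D(\Om)$, namely $\int_\Om |\nabla w|^2\,d\bfx \ge \lambda_D(\Om)\int_\Om w^2\,d\bfx$ for all $w\in\mH^1_0(\Om)$, applied with $w=\zeta$ to bound $-\int_\Om\zeta\,\Delta\zeta = \int_\Om|\nabla\zeta|^2 \ge \lambda_D(\Om)\int_\Om \zeta^2$, but more to the point I would apply a second time the fact that $-\Delta_D(\Om)\ge\lambda_D(\Om)$ in the form $\|(-\Delta)^{1/2}v\|^2\ge\lambda_D(\Om)\|v\|^2$ with $v=(-\Delta_D(\Om))^{1/2}\zeta$, equivalently $\langle -\Delta_D(\Om)\,(-\Delta_D(\Om))\zeta,\zeta\rangle \ge \lambda_D(\Om)\langle -\Delta_D(\Om)\zeta,\zeta\rangle$, i.e.\ $\int_\Om(\Delta\zeta)^2 \ge \lambda_D(\Om)\int_\Om|\nabla\zeta|^2$.

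The only mild subtlety — and the place to be slightly careful rather than the real obstacle — is that $-\Delta\zeta$ need not lie in $\mH^1_0(\Om)$, so one cannot naively plug $w=-\Delta\zeta$ into the Rayleigh quotient above; this is precisely why the clean way is to phrase everything through the spectral theorem for the self-adjoint operator $-\Delta_D(\Om)$, where $\zeta$ need only belong to the form domain $\mH^1_0(\Om)$ with $-\Delta_D(\Om)\zeta\in\mL^2(\Om)$ (which is exactly $D(-\Delta_D(\Om))$ for the Lipschitz domain $\Om$, or at any rate a core on which the quadratic forms are continuous), and the inequality $\mu^2\ge\lambda_D(\Om)\mu$ on $\operatorname{spec}(-\Delta_D(\Om))\subset[\lambda_D(\Om),+\infty)$ is trivial. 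In short, I expect no genuine difficulty: the lemma is a one-line consequence of self-adjointness and non-negativity of $-\Delta_D(\Om)$ once the functional-analytic setting is written down, and the proof should take only a few lines.
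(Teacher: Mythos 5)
Your argument is correct and is essentially the paper's own: both proofs reduce the inequality to the spectral bound $\mu^2\ge\lambda_D(\Om)\,\mu$ on the spectrum of the self-adjoint Dirichlet Laplacian, the only cosmetic difference being that the paper phrases this through the bounded inverse $(-\Delta_D(\Om))^{-1}$ (whose Rayleigh quotient is at most $1/\lambda_D(\Om)$, evaluated at $f=-\Delta\zeta$), whereas you work directly with the spectral measure of $-\Delta_D(\Om)$ itself. Your remark that one cannot simply insert $w=-\Delta\zeta$ into the $\mH^1_0$ Rayleigh quotient, and that the clean route is the spectral theorem on the domain $\mH^1_0(\Delta;\Om)=D(-\Delta_D(\Om))$, is exactly the right way to handle the one genuine subtlety.
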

\noindent Using Lemma \ref{EstimMin}, from (\ref{CalculCurl}), (\ref{CalculNormL2}), we deduce
\[
\dsp\int_{\Om}|\curlvec\E_p|^2\,d\bfx \le \lambda_D(\Om) \dsp\int_{\Om}\Phi^2-|\nabla\psi|^2\,d\bfx = \lambda_D(\Om)
\dsp\int_{\Om}|\E_p|^2\,d\bfx.
\]
Since by assumption $\lambda_D(\Om)<\lambda_N$, the min-max principle ensures that $A$ has a non-empty discrete spectrum.
\end{proof}
\begin{proof}[Proof of Lemma \ref{EstimMin}. ] The inverse of $-\Delta_D(\Om)$ is 
a bounded and self-adjoint operator of $\mL^2(\Om)$. Since $\lambda_D(\Om)>0$ is the minimum of the spectrum of $-\Delta_D(\Om)$, one has 
\[
\cfrac{1}{\lambda_D(\Om)}=\sup_{f\in\mL^2(\Om)\setminus\{0\}} \cfrac{\dsp\int_{\Om} (-\Delta_D(\Om)^{-1}f)\,f\,d\bfx}{\dsp\int_{\Om}f^2\,d\bfx}=\sup_{\zeta\in\mH^1_0(\Delta;\Om)\setminus\{0\}} \cfrac{\dsp\int_{\Om}|\nabla\zeta|^2\,d\bfx}{\dsp\int_{\Om}(\Delta\zeta)^2\,d\bfx}\,.
\]  
\end{proof}
\noindent Note that as in the previous section, the statement of Theorem \ref{ThmBigRes} is also valid in domains which are the union of a resonator and several semi-infinite waveguides.

\subsection{The 6 legs geometry }\label{Para6legs}

Now let us consider a more complex geometry, which can be of interest for practical applications, where separation of variables cannot be used directly. 

\subsubsection*{Setting}

\begin{figure}[!ht]
\centering
\includegraphics[width=4.9cm]{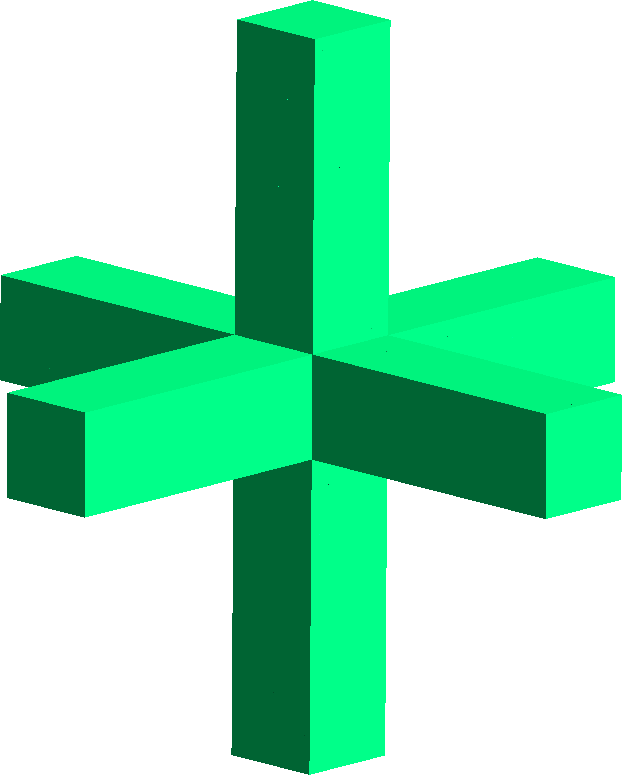}\quad
\begin{tikzpicture}[scale=0.94]
\draw[fill=gray!30,draw=none](-2.8,-0.5) rectangle (2.8,0.5);
\draw[fill=gray!30,draw=none](-0.5,-2.8) rectangle (0.5,2.8);
\draw[black] (0.5,2.8)--(0.5,0.5)--(2.8,0.5);
\draw[black] (-0.5,2.8)--(-0.5,0.5)--(-2.8,0.5);
\draw[black] (0.5,-2.8)--(0.5,-0.5)--(2.8,-0.5);
\draw[black] (-0.5,-2.8)--(-0.5,-0.5)--(-2.8,-0.5);
\draw[black,dashed] (0.5,-0.5)--(0.5,0.5);
\draw[black,dashed] (-0.5,-0.5)--(-0.5,0.5);
\draw[black,dashed] (-0.5,0.5)--(0.5,0.5);
\draw[black,dashed] (0.5,-0.5)--(-0.5,-0.5);
\node at (1.4,0) {$\GX_x^+$};
\node at (-1.4,0) {$\GX_x^-$};
\node at (0,1.4) {$\GX_y^+$};
\node at (0,-1.4) {$\GX_y^-$};
\node at (0,0) {$\square$};
\end{tikzpicture}\quad\raisebox{0.0cm}{\includegraphics[width=5.3cm]{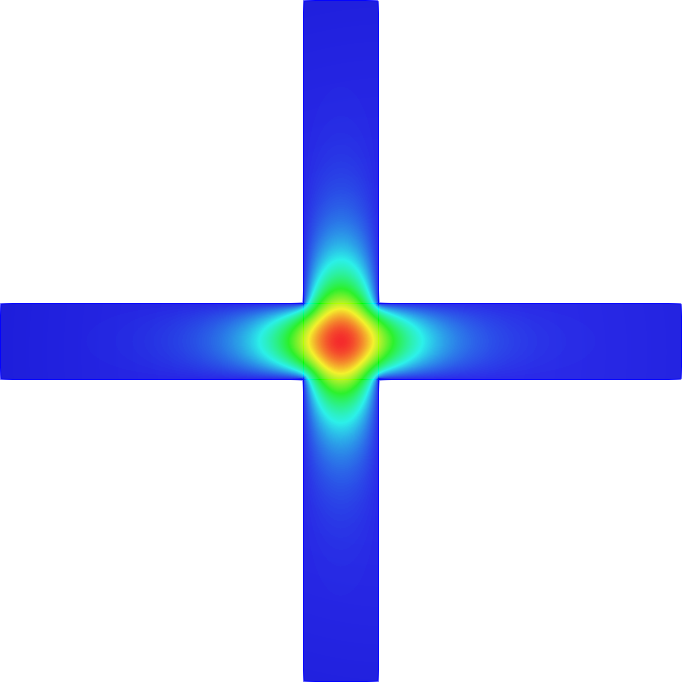}}
\caption{The 6 legs geometry $\Om\subset\R^3$ (left), the domain $\GX\subset\R^2$ (center) and an eigenfunction associated with the first eigenvalue of the Dirichlet Laplacian in $\GX$. \label{GeomBug}}
\end{figure}

Define 
\[
\begin{array}{l}
\Pi^+_x\coloneqq[1/2;+\infty)\times(-1/2;1/2)\times(-1/2;1/2),\qquad \Pi^-_x\coloneqq(-\infty;-1/2]\times(-1/2;1/2)\times(-1/2;1/2)\\[2pt]
\Pi^+_y\coloneqq(-1/2;1/2)\times[1/2;+\infty)\times(-1/2;1/2),\qquad \Pi^-_y\coloneqq(-1/2;1/2)\times(-\infty;-1/2]\times(-1/2;1/2)\\[2pt]
\Pi^+_z\coloneqq(-1/2;1/2)\times(-1/2;1/2)\times[1/2;+\infty),\qquad \Pi^-_z\coloneqq(-1/2;1/2)\times(-1/2;1/2)\times(-\infty;-1/2]\\[2pt]
\mathcal{C}\coloneqq(-1/2;1/2)^3
\end{array}
\]
and consider the waveguide $\Om$ represented in Figure \ref{GeomBug} left such that
\[
\Om\coloneqq\Pi^+_x\cup\Pi^+_y\cup\Pi^+_z\cup\Pi^-_x\cup\Pi^-_y\cup\Pi^-_z\cup\mathcal{C}.
\]
We wish to prove that the discrete spectrum of $A$ in $\Om$ is not empty. To proceed, we will exploit that the Dirichlet Laplacian in the 2D X-shaped domain 
\[
\GX\coloneqq\{(y,z)\in\R^2\,|\,|y|<1/2\mbox{ or }|z|<1/2\}
\]
(see Figure \ref{GeomBug} center) admits an eigenvalue $\lambda_\bullet<\pi^2$ in its discrete spectrum (see \cite{ScRW89,Naza11,Naza14b}). Consider $\varphi\in\mH^1_0(\GX)$ a corresponding eigenfunction, \textit{i.e.} a function such that 
\begin{equation}\label{Spectral2DX}
-\Delta\varphi=\lambda_\bullet\varphi\quad\mbox{ in }\GX
\end{equation}
(see Figure \ref{GeomBug} right for a numerical approximation). Classically, with the Krein-Rutman theorem, one shows that $\varphi$ can be chosen such that $\varphi>0$ in $\GX$. Moreover, one proves that $\varphi$ satisfies $\varphi(y,z)=\varphi(-y,z)$, $\varphi(y,z)=\varphi(y,-z)$ and $\varphi(y,z)=\varphi(z,y)$ for $(y,z)\in \GX$. Solving numerically the 2D scalar problem (\ref{Spectral2DX}), with \texttt{Freefem++}, one obtains 
\begin{equation}\label{Vp2DX}
\lambda_\bullet\approx 6.5186 \approx 0.6605\pi^2.
\end{equation}

\subsubsection*{Existence of discrete spectrum}

To show existence of discrete spectrum for $A$, we work with the min-max principle (\ref{MaxMinFormula}). Since the section of the six branches constituting $\Om$ is a square of side one, Proposition \ref{DefSigmaEss} ensures that $\sigma_{\mrm{ess}}(A)=[\pi^2;+\infty)$. Therefore we have to construct some $\E\in\boldsymbol{\mX}_N(\Om)$ such that 
\begin{equation}\label{QtyToEstimateX}
\int_{\Om} |\curlvec\E|^2\,d\bfx-\pi^2\int_{\Om} |\E|^2\,d\bfx<0.
\end{equation}
Define $\E_p$ such that 
\begin{equation}\label{DefEpX}
\E_p\coloneqq\left(\begin{array}{c}
E_x\\
0\\
0
\end{array}\right)
\end{equation}
with
\[
E_x(\bfx)=\begin{array}{|ll}
\varphi(y,z)& \mbox{for }|x|<1/2\\
0& \mbox{for }|x|>1/2.
\end{array}
\]
We have $\E_{p}\in \boldsymbol{\mH}_N(\curlvec)$ as well as $\E_{p}\times\bfnu=0$ on $\partial\Om$. However $\div\,\E_{p}\not\equiv0$ in $\Om$ because the normal component of $\E_{p}$ is discontinuous at $x=\pm1/2$. To obtain a divergence free vector field, introduce $\psi\in\mH^1_0(\Om)$ such that
\begin{equation}\label{PbDefPsi}
\int_\Om\nabla\psi\cdot\nabla\psi'\,d\bfx=\int_\Om\E_{p}\cdot\nabla\psi'\,d\bfx=\int_\Om E_x\partial_x\psi'\,d\bfx=\int_{\{\bfx\in\Om\,|\,|x|<1/2\}} E_x\partial_x\psi'\,d\bfx,\quad \forall\psi'\in\mH^1_0(\Om).
\end{equation}
Note that the Poincar\'e inequality (\ref{PoincareIneq}) guarantees that $\psi$ is well-defined. Moreover, observe that we have $\psi(x,y,z)=-\psi(-x,y,z)$ in $\Om$.  Set $\tilde{\E}_{p}\coloneqq\E_{p}-\nabla\psi$. This $\tilde{\E}_{p}$ belongs to $\boldsymbol{\mX}_N(\Om)$. Now our goal is to establish that $\tilde{\E}_p$ satisfies (\ref{QtyToEstimateX}).\\
\newline
A direct calculation gives 
\begin{equation}\label{EstimAboveX}
\dsp\int_{\Om}|\curlvec \tilde{\E}_p|^2\,d\bfx=\int_0^1\int_\GX|\nabla\varphi|^2\,dxdydz=\int_\GX|\nabla\varphi|^2\,dydz=\lambda_\bullet\int_\GX\varphi^2\,dydz.
\end{equation}
\noindent In (\ref{QtyToEstimateX}), it remains to estimate $\int_{\Om} |\tilde{\E}_{p}|^2\,d\bfx$. By exploiting that 
\begin{equation}\label{TermNRJ}
\int_{\Om}|\nabla\psi|^2\,d\bfx=\int_{\Om}\E_p\cdot\nabla\psi\,d\bfx,
\end{equation}
we get
\begin{equation}\label{Rel2X}
\int_{\Om} |\tilde{\E}_{p}|^2\,d\bfx=\int_{\Om} |\E_{p}|^2\,d\bfx-\int_{\Om} |\nabla\psi|^2\,d\bfx =\dsp\int_\GX\varphi^2\,dydz-\int_{\Om} |\nabla\psi|^2\,d\bfx.
\end{equation}
Let us control the term $\int_{\Om} |\nabla\psi|^2\,d\bfx$ by $\int_\GX\varphi^2\,dydz$. Define $\Sigma^\pm_x\coloneqq\{\pm1/2\}\times(-1/2;1/2)\times(-1/2;1/2)$. Integrating by parts in the right-hand side of (\ref{TermNRJ}) and using that $\div\,\E_p=0$ in $\Om\setminus(\Sigma^{+}_x\cup\Sigma^{-}_x)$, we find
\[
\dsp\int_\Om|\nabla\psi|^2\,d\bfx=\dsp\sum_\pm\pm\int_{\Sigma^\pm_x}\varphi\,\psi(\pm1/2,\cdot)\,dydz.
\]
Writing
\[
\bigg|\int_{\Sigma^\pm_x}\varphi\,\psi(\pm1/2,\cdot)\,dydz\bigg|\le \|\varphi\|_{\mL^2(\square)}\|\psi\|_{\mL^2(\Sigma^\pm_x)}
\]
with $\square=(-1/2;1/2)^2$, we obtain 
\begin{equation}\label{EstimDepX}
\int_\Om|\nabla\psi|^2\,d\bfx \le \sqrt{2}\|\varphi\|_{\mL^2(\square)}\big(\sum_\pm\|\psi\|^2_{\mL^2(\Sigma^\pm_x)}\big)^{1/2}.
\end{equation}
\begin{lemma}\label{LemmaProof6Legs1}
For the function $\psi$ introduced in (\ref{PbDefPsi}), we have the estimate
\begin{equation}\label{EstimEtaQuaX}
\sum_\pm\|\psi\|^2_{\mL^2(\Sigma^\pm_x)} \le \cfrac{1}{\sqrt{2}\pi+\sqrt{2\kappa(\sqrt{5\pi^2})}} \int_{\Om}|\nabla\psi|^2\,d\bfx
\end{equation}
where the constant $\kappa(\sqrt{5\pi^2})$ is defined in (\ref{TransEqnLemmaX}).
\end{lemma}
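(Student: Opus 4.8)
The estimate (\ref{EstimEtaQuaX}) compares the $\mrm{L}^2$-norm of $\psi$ on the two interfaces $\Sigma^\pm_x$ with the full Dirichlet energy of $\psi$ on $\Om$. Since $\psi$ is odd in $x$ (as noted after (\ref{PbDefPsi})), it suffices to work on $\{x>0\}$ and bound $\|\psi\|^2_{\mL^2(\Sigma^+_x)}$ by the energy of $\psi$ in $\Pi^+_x$. The plan is to localize the problem to the branch $\Pi^+_x=[1/2;+\infty)\times\square$: inside this branch, (\ref{PbDefPsi}) with test functions supported there shows that $\psi$ is harmonic, so it admits a modal decomposition on the cross-section $\square=(-1/2;1/2)^2$ in the Dirichlet eigenbasis $\{e_n\}$ of $-\Delta_\square$ with eigenvalues $\mu_n$. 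Writing $\psi(x,\cdot)=\sum_n \psi_n(x)\,e_n$ for $x>1/2$, harmonicity forces $\psi_n''=\mu_n\psi_n$, and decay at $+\infty$ gives $\psi_n(x)=\psi_n(1/2)\,e^{-\sqrt{\mu_n}(x-1/2)}$. Hence $\int_{1/2}^{+\infty}\!\!\int_\square(|\partial_x\psi|^2+|\nabla_{y,z}\psi|^2)\,d\bfx=\sum_n 2\sqrt{\mu_n}\,|\psi_n(1/2)|^2 = 2\sum_n\sqrt{\mu_n}\,|\psi_n(1/2)|^2$, while $\|\psi\|^2_{\mL^2(\Sigma^+_x)}=\sum_n|\psi_n(1/2)|^2$. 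Thus the ratio is governed by the smallest value of $2\sqrt{\mu_n}$ appearing in the trace of $\psi$.

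The key refinement — and the reason the bound in (\ref{EstimEtaQuaX}) is sharper than the naive $1/(\sqrt2\pi)$ — is that the first cross-sectional mode cannot be arbitrary: the symmetries of $\varphi$ (and hence of $\E_p$, hence of $\psi$) restrict which modes $e_n$ are active. The lowest Dirichlet eigenvalue of $\square$ is $\mu_1=2\pi^2$, with eigenfunction $\cos(\pi y)\cos(\pi z)$, giving $2\sqrt{\mu_1}=2\sqrt2\pi$; but one must check whether the trace of $\psi$ on $\Sigma^+_x$ has a nonzero component on this mode, and if so whether a \emph{better} grouping is possible. The next step is therefore to split $\psi$ on $\Sigma^+_x$ into the part carried by $\cos(\pi y)\cos(\pi z)$ (contributing a factor at least $\sqrt2\pi$ to the energy-to-trace ratio, after accounting for the factor $2$ and the odd-in-$x$ doubling giving $\sqrt2\pi$) and the orthogonal remainder; for the remainder, the relevant eigenvalue is the next one that is compatible with the symmetry constraints, and the quantity $\kappa(\sqrt{5\pi^2})$ in (\ref{TransEqnLemmaX}) should encode exactly the effective decay rate $\sqrt{\mu}$ of that second family — presumably $\mu=5\pi^2$ corresponds to modes like $\cos(2\pi y)\cos(\pi z)$, and $\kappa$ is the root of a transcendental equation coming from matching $\psi$ across the junction $\mathcal{C}$ rather than treating the branch in isolation. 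I would set up that matched decomposition, isolate the two contributions, and read off the coefficient $1/(\sqrt2\pi+\sqrt{2\kappa(\sqrt{5\pi^2})})$.

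The main obstacle is controlling the interaction at the central cube $\mathcal{C}$: $\psi$ is only harmonic \emph{inside} each branch, not globally (it satisfies (\ref{PbDefPsi}) with a right-hand side supported where $\div\E_p\neq0$, i.e.\ on $\Sigma^\pm_x$), so one cannot simply add up branch energies without accounting for the energy stored in $\mathcal{C}$ and the couplings between the six legs. The honest way to handle this is a variational / Dirichlet-to-Neumann argument: the energy $\int_\Om|\nabla\psi|^2$ is minimized, among $\mH^1_0(\Om)$ functions with prescribed trace $\psi|_{\Sigma^\pm_x}$ and the given odd symmetry, by the actual $\psi$; bounding this minimum from below requires a lower bound on the DtN operator of the half-branch $\Pi^+_x$ (which is explicit and diagonal in the $e_n$ basis, with symbol $\sqrt{\mu_n}$) plus a nonnegative contribution from $\mathcal{C}$ and the transverse legs. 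Ensuring the constant $\kappa$ is genuinely the correct transcendental root — rather than a crude lower bound — will be the delicate bookkeeping; I expect the proof to reduce, after the modal decomposition, to a one-dimensional Sturm–Liouville / matching problem on a finite interval (the cube's width $1$) whose first eigenvalue is $\kappa(\sqrt{5\pi^2})$, and the remaining work is to verify carefully that the symmetry of $\psi$ kills all modes with decay rate between $\sqrt2\pi$ and $\sqrt{5\pi^2}$ so that these two are indeed the only ones that enter.
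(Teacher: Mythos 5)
Your overall instinct --- control the trace on $\Sigma^\pm_x$ by the energy in the legs $\Pi^\pm_x$, then find extra energy elsewhere to improve the constant --- is the right one, but the proposal contains a computational error and misidentifies the mechanism behind the second constant, so as written it does not reach (\ref{EstimEtaQuaX}). First, the energy identity for the harmonic extension in the half-cylinder is $\int_{\Pi^+_x}|\nabla\psi|^2\,d\bfx=\sum_n\sqrt{\mu_n}\,|\psi_n(1/2)|^2$, not $\sum_n 2\sqrt{\mu_n}\,|\psi_n(1/2)|^2$: the $x$-integral of $e^{-2\sqrt{\mu_n}(x-1/2)}$ contributes a factor $1/(2\sqrt{\mu_n})$ to each of the two terms $|\partial_x\psi|^2$ and $|\nabla_{y,z}\psi|^2$. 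With the correct factor, the branch-only estimate yields $\sum_\pm\|\psi\|^2_{\mL^2(\Sigma^\pm_x)}\le(\sqrt{2}\pi)^{-1}\int_{\Pi^-_x\cup\Pi^+_x}|\nabla\psi|^2\,d\bfx$, which is exactly the paper's intermediate bound (\ref{EstimT1bisX}) (obtained there by Young's inequality plus a transverse Poincar\'e inequality, without needing harmonicity) and which the paper explicitly notes is too crude to conclude. Your hoped-for improvement via ``symmetry killing intermediate modes'' is not available: the trace of $\psi$ on $\Sigma^+_x$ is even in $y$ and $z$ and generically carries the first mode $\cos(\pi y)\cos(\pi z)$, so nothing is excluded; and in any case a splitting of the trace into two mode families would produce a constant of the form of a maximum of two DtN bounds, not the additive denominator $\sqrt{2}\pi+\sqrt{2\kappa(\sqrt{5\pi^2})}$.

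The missing idea is that the second bound must draw on a region of $\Om$ \emph{disjoint} from $\Pi^-_x\cup\Pi^+_x$. The paper uses the oddness $\psi(x,y,z)=-\psi(-x,y,z)$, hence $\psi=0$ at $x=0$, to write $\|\psi\|^2_{\mL^2(\Sigma^+_x)}\le\alpha\int_{\mathcal{C}^{\circ}}\psi^2\,d\bfx+\alpha^{-1}\int_{\mathcal{C}^{\circ}}(\partial_x\psi)^2\,d\bfx$ on the half-cube $\mathcal{C}^{\circ}=(0;1/2)\times(-1/2;1/2)^2$, and then controls $\int_{\mathcal{C}^{\circ}}\psi^2\,d\bfx$ by the energies in $\mathcal{C}^{\circ}$ and in the transverse half-legs $\Pi^{\circ}_y$, $\Pi^{\circ}_z$ via the one-dimensional Poincar\'e--Friedrichs inequality (\ref{Fried_ineqX}) applied fiber-wise in $y$ and in $z$. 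Here $a^2=5\pi^2$ is not the eigenvalue of a higher cross-sectional mode of $\square$ in the $x$-branch, as you conjecture, but the Poincar\'e constant $4\pi^2+\pi^2$ of the cross-section $(0;1/2)\times(-1/2;1/2)$ of the half-legs (Dirichlet at $x=0$ by oddness and on $\partial\Om$); and $\kappa(a)$ is the first eigenvalue of the one-dimensional problem behind (\ref{Fried_ineqX})--(\ref{TransEqnLemmaX}), not of a matching problem at the junction. This yields a second bound $\sum_\pm\|\psi\|^2_{\mL^2(\Sigma^\pm_x)}\le(2\kappa(\sqrt{5\pi^2}))^{-1/2}\int_{\mathcal{C}\cup\Pi^+_y\cup\Pi^-_y\cup\Pi^+_z\cup\Pi^-_z}|\nabla\psi|^2\,d\bfx$. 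Multiplying the two bounds by $\sqrt{2}\pi$ and $\sqrt{2\kappa(\sqrt{5\pi^2})}$ respectively and summing makes the two disjoint integration regions recombine into $\Om$ and produces the announced constant. Without this two-region decomposition your argument stalls at the constant $1/(\sqrt{2}\pi)$.
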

\noindent We postpone the proof of this result to the end of the paragraph. From (\ref{EstimDepX}), (\ref{EstimEtaQuaX}), we derive 
\begin{equation}\label{EstimSuiteX}
\int_\Om|\nabla\psi|^2\,d\bfx \le \cfrac{2}{\sqrt{2}\pi+\sqrt{2\kappa(\sqrt{5\pi^2})}}\ \|\varphi\|^2_{\mL^2(\square)}.
\end{equation}
By gathering (\ref{EstimAboveX}), (\ref{Rel2X}) and (\ref{EstimSuiteX}), we can write
\begin{equation}\label{EstimaInterX}
\begin{array}{rcl}
\dsp\int_{\Om} |\curlvec\tilde{\E}_{p}|^2\,d\bfx-\pi^2\int_{\Om} |\tilde{\E}_{p}|^2\,d\bfx &\hspace{-0.2cm}=&\hspace{-0.2cm}(\lambda_\bullet-\pi^2)\dsp\int_\GX\varphi^2\,dydz+\pi^2\int_{\Om} |\nabla\psi|^2\,d\bfx\\[10pt]
&\hspace{-0.2cm}\le&\hspace{-0.2cm}(\lambda_\bullet-\pi^2)\dsp\int_\GX\varphi^2\,dydz+\cfrac{2\pi^2}{\sqrt{2}\pi+\sqrt{2\kappa(\sqrt{5\pi^2})}}\,\dsp\int_{\square}\varphi^2\,dydz.
\end{array}\hspace{-0.2cm}
\end{equation}
\begin{lemma}\label{LemmaProof6Legs2}
For the function $\varphi$ introduced in (\ref{Spectral2DX}), we have the estimate
\begin{equation}\label{Lemma2DPF}
\dsp\int_{\square}\varphi^2\,dydz \le \cfrac{1}{2\kappa(\pi)}\int_{\GX}|\nabla\varphi|^2\,dydz
\end{equation}
where the constant $\kappa(\pi)$ is defined in (\ref{TransEqnLemmaX}).
\end{lemma}
\noindent Again we give the proof of this result below. Inserting (\ref{Lemma2DPF}) in (\ref{EstimaInterX}) and using that $\varphi$ satisfies (\ref{Spectral2DX}), we obtain 
\[
\dsp\int_{\Om} |\curlvec\tilde{\E}_{p}|^2\,d\bfx-\pi^2\int_{\Om} |\tilde{\E}_{p}|^2\,d\bfx \le  \bigg(\lambda_\bullet-\pi^2+\cfrac{\pi^2\lambda_\bullet}{\kappa(\pi)(\sqrt{2}\pi+\sqrt{2\kappa(\sqrt{5\pi^2})})}\ \bigg)\dsp\int_\GX\varphi^2\,dydz.
\]
By solving (\ref{TransEqnLemmaX}), we get $\kappa(\pi)\approx4.0214$, $\kappa(\sqrt{5\pi^2})\approx6.0827$ and so
\[
\lambda_\bullet-\pi^2+\cfrac{\pi^2\lambda_\bullet}{\kappa(\pi)(\sqrt{2}\pi+\sqrt{2\kappa(\sqrt{5\pi^2})})} \approx -1.0355.
\]
This shows that $\tilde{\E}_{p}$ satisfies (\ref{QtyToEstimateX}) and that the discrete spectrum of $A$ has at least one eigenvalue.  Admittedly, this is not a completely rigorous proof because at the end we exploit the value of $\lambda_\bullet$ computed numerically with a finite element method. This is the reason why we do not write a proposition. However $\lambda_\bullet$ has been obtained with a rather refined mesh and error estimates can be established concerning its computation. Moreover, from the min-max principle, we know that the numerical approximations we get are upper bounds for $\lambda_\bullet$ if we neglect the rounding errors made by the computer. As a consequence, we are convinced that $\sigma_{\mrm{d}}(A)\ne\emptyset$.
\begin{remark}\label{RmkAppThm}
Numerically we find that the Dirichlet Laplacian in $\Om$ has 
a non-empty discrete spectrum with a smallest eigenvalue approximately equal to 12.72. Since it is larger than $\pi^2$, we could not exploit Theorem \ref{ThmBigRes} to deduce directly that $\sigma_{\mrm{d}}(A)\ne\emptyset$.
\end{remark}
\noindent We end this paragraph by establishing the two technical results used above. \\
 
\noindent\textit{Proof of Lemma \ref{LemmaProof6Legs1}.}
First, we can write
\[
\begin{array}{rcl}
\dsp\|\psi\|_{\mL^2(\Sigma^\pm_x)}^2 =\dsp\int_{-1/2}^{1/2}\int_{-1/2}^{1/2}\psi^2(\pm1/2,y,z)\,dydz&=&\mp\dsp\int_{\Pi^\pm_x}2\psi\partial_x\psi\,d\bfx \\[8pt]
&\le&\sqrt{2\pi^2}\dsp\int_{\Pi^\pm_x}\psi^2\,d\bfx+\cfrac{1}{\sqrt{2\pi^2}}\dsp\int_{\Pi^\pm_x}(\partial_x\psi)^2\,d\bfx.
\end{array}
\]
From the Poincar\'e inequality
\[
2\pi^2\dsp\int_{\Pi^\pm_x}\psi^2\,d\bfx \le \dsp\int_{\Pi^\pm_x}(\partial_y\psi)^2+(\partial_z\psi)^2\,d\bfx,
\]
we deduce 
\[
\|\psi\|_{\mL^2(\Sigma^\pm_x)}^2 \le \cfrac{1}{\sqrt{2\pi^2}}\int_{\Pi^\pm_x}|\nabla\psi|^2\,d\bfx.
\]
Thus we obtain 
\begin{equation}\label{EstimT1bisX}
\sum_\pm\|\psi\|^2_{\mL^2(\Sigma^\pm_x)} \le \cfrac{1}{\sqrt{2\pi^2}}\int_{\Pi^-_x\cup\Pi^+_x}|\nabla\psi|^2\,d\bfx.
\end{equation}
At this point, it could have been tempting to control the right-hand side of (\ref{EstimT1bisX}) by $\int_{\Om}|\nabla\psi|^2\,d\bfx$. However this is a too crude estimate which is not sufficient for our needs (see (\ref{EstimSuiteX}) and below), we have to work more carefully. To proceed, we will also exploit the term $\int_{\mathcal{C}\cup\Pi^-_y\cup\Pi^+_y\cup\Pi^-_z\cup\Pi^+_z}|\nabla\psi|^2\,d\bfx$. First, recall that we have $\psi=0$ at $x=0$ because $\psi(x,y,z)=-\psi(-x,y,z)$ in $\Om$. This allows us to write, for all $\alpha>0$,
\begin{equation}\label{BiduleAlpha}
\|\psi\|_{\mL^2(\Sigma^+_x)}^2 =\dsp\int_{-1/2}^{1/2}\int_{-1/2}^{1/2}\psi^2(1/2,y,z)\,dydz\le\alpha\dsp\int_{\mathcal{C}^{\circ}}\psi^2\,d\bfx+\alpha^{-1}\dsp\int_{\mathcal{C}^{\circ}}(\partial_x\psi)^2\,d\bfx
\end{equation}
where $\mathcal{C}^{\circ}\coloneqq(0;1/2)\times(-1/2;1/2)^2$. In the Appendix we show that for $a>0$, we have the Poincar\'e-Friedrichs inequality
\begin{equation}\label{Fried_ineqX}
 \kappa(a)\int_{0}^{1/2}\phi^2\,dt \le \int_{0}^{+\infty}(\partial_t\phi)^2\,dt+a^2\int_{1/2}^{+\infty}\phi^2\,dt,\qquad \forall \phi\in\mH^1(0;+\infty),
\end{equation}
where $\kappa(a)$ is the smallest positive root of the transcendental equation
\begin{equation}\label{TransEqnLemmaX}
\sqrt{\kappa}\tan\bigg(\cfrac{\sqrt{\kappa}}{2}\bigg)=a.
\end{equation}
By taking $a=\sqrt{5\pi^2}$ in (\ref{Fried_ineqX}) with $t$ replaced by the variable $y$, by exploiting a similar estimate in $(-\infty;0)$, after integration with respect to $(x,z)\in(0;1/2)\times(-1/2;1/2)$, we obtain 
\begin{equation}\label{PoincareFried1}
\kappa(\sqrt{5\pi^2})\dsp\int_{\mathcal{C}^{\circ}}\psi^2\,d\bfx \le \int_{\mathcal{C}^{\circ}\cup\Pi^{\circ}_y}(\partial_y\psi)^2\,d\bfx+5\pi^2\int_{\Pi^{\circ}_y}\psi^2\,d\bfx.
\end{equation}
Here $\Pi^{\circ}_y\coloneqq\{\bfx\in\Pi^+_y\cup\Pi^-_y\,|\,x>0\}$. Since the section of $\Pi^{\circ}_y$ is a rectangle of width $1/2$ and length $1$, we have the Poincar\'e inequality
\begin{equation}\label{PoincareRect}
5\pi^2\int_{\Pi^{\circ}_y}\psi^2\,d\bfx \le \int_{\Pi^{\circ}_y}(\partial_x\psi)^2+(\partial_z\psi)^2\,d\bfx.
\end{equation}
Inserting (\ref{PoincareRect}) in (\ref{PoincareFried1}) gives 
\[
\kappa(\sqrt{5\pi^2})\dsp\int_{\mathcal{C}^{\circ}}\psi^2\,d\bfx \le \int_{\mathcal{C}^{\circ}}(\partial_y\psi)^2\,d\bfx+\int_{\Pi^{\circ}_y}|\nabla\psi|^2\,d\bfx.
\]
Similarly, we establish the estimate
\[
\kappa(\sqrt{5\pi^2})\dsp\int_{\mathcal{C}^{\circ}}\psi^2\,d\bfx \le \int_{\mathcal{C}^{\circ}}(\partial_z\psi)^2\,d\bfx+\int_{\Pi^{\circ}_z}|\nabla\psi|^2\,d\bfx
\]
where $\Pi^{\circ}_z\coloneqq\{\bfx\in\Pi^+_z\cup\Pi^-_z\,|\,x>0\}$. By taking $\alpha=\sqrt{2\kappa(\sqrt{5\pi^2})}$ in (\ref{BiduleAlpha}), we find
\[
\|\psi\|_{\mL^2(\Sigma^+_x)}^2 \le \cfrac{1}{\sqrt{2\kappa(\sqrt{5\pi^2})}}\int_{\mathcal{C}^{\circ}\cup\Pi^{\circ}_y\cup\Pi^{\circ}_z}|\nabla\psi|^2\,d\bfx.
\]
Working similarly to estimate $\|\psi\|^2_{\mL^2(\Sigma^-_x)}$, we obtain
\begin{equation}\label{EstimEtaTerX}
\sum_\pm\|\psi\|_{\mL^2(\Sigma^\pm_x)}^2 \le \cfrac{1}{\sqrt{2\kappa(\sqrt{5\pi^2})}}\int_{\mathcal{C}\cup\Pi^+_y\cup\Pi^+_z\cup\Pi^-_y\cup\Pi^-_z}|\nabla\psi|^2\,d\bfx.
\end{equation}
By multiplying (\ref{EstimT1bisX}), (\ref{EstimEtaTerX}) respectively by $\sqrt{2\pi^2}$, $\sqrt{2\kappa(\sqrt{5\pi^2})}$ and summing the two resulting inequalities, we deduce (\ref{EstimEtaQuaX}).\hfill $\square$\\
\newline
\noindent \textit{Proof of Lemma \ref{LemmaProof6Legs2}.} Using the Poincar\'e-Friedrichs inequality (\ref{Fried_ineqX}) with $a=\pi$ and $t$ replaced by the variable $y$, after integration with respect to $z\in(-1/2;1/2)$, we find
\[
\kappa(\pi)\int_{\square^+_y}\varphi^2\,dydz \le \int_{\square^+_y\cup\GX^+_y}(\partial_y\varphi)^2\,dydz+\pi^2 \int_{\GX^+_y}\varphi^2\,dydz.
\]
Here $\square^+_y\coloneqq(0;1/2)\times(-1/2;1/2)$ and $\GX^+_y\coloneqq(1/2;+\infty)\times(-1/2;1/2)$. But we have the Poincar\'e inequality
\[
\pi^2 \int_{\GX^+_y}\varphi^2\,dydz \le  \int_{\GX^+_y}(\partial_z\varphi)^2\,dydz. 
\]
Thus, there holds
\[
\kappa(\pi)\int_{\square^+_y}\varphi^2\,dydz \le \int_{\square^+_y}(\partial_y\varphi)^2\,dydz+ \int_{\GX^+_y}|\nabla\varphi|^2\,dydz.
\]
By establishing a similar estimate for $y<0$ and by summing the two, we get
\begin{equation}\label{EstimPF1}
\kappa(\pi)\int_{\square}\varphi^2\,dydz \le \int_{\square}(\partial_y\varphi)^2\,dydz+ \int_{\GX^+_y\cup\GX^-_y}|\nabla\varphi|^2\,dydz,
\end{equation}
where $\GX^-_y\coloneqq(-\infty;-1/2)\times(-1/2;1/2)$. In the same way, one proves 
\begin{equation}\label{EstimPF2}
\kappa(\pi)\int_{\square}\varphi^2\,dydz \le \int_{\square}(\partial_z\varphi)^2\,dydz+ \int_{\GX^+_z\cup\GX^-_z}|\nabla\varphi|^2\,dydz,
\end{equation}
with $\GX^+_z\coloneqq(-1/2;1/2)\times(1/2;+\infty)$, $\GX^-_z\coloneqq(-1/2;1/2)\times(-\infty;-1/2)$. Adding (\ref{EstimPF1}) and (\ref{EstimPF2}) leads to (\ref{Lemma2DPF}). \hfill $\square$

\subsection{The tripode}

Let us consider a second 3D geometry with branches having square sections. A bit unexpectedly, its study reveals more complicated than that of the waveguide met in the previous paragraph. Often we will use similar notation corresponding though to different objects.

\subsubsection*{Setting}

\begin{figure}[!ht]
\centering
\includegraphics[width=5.4cm]{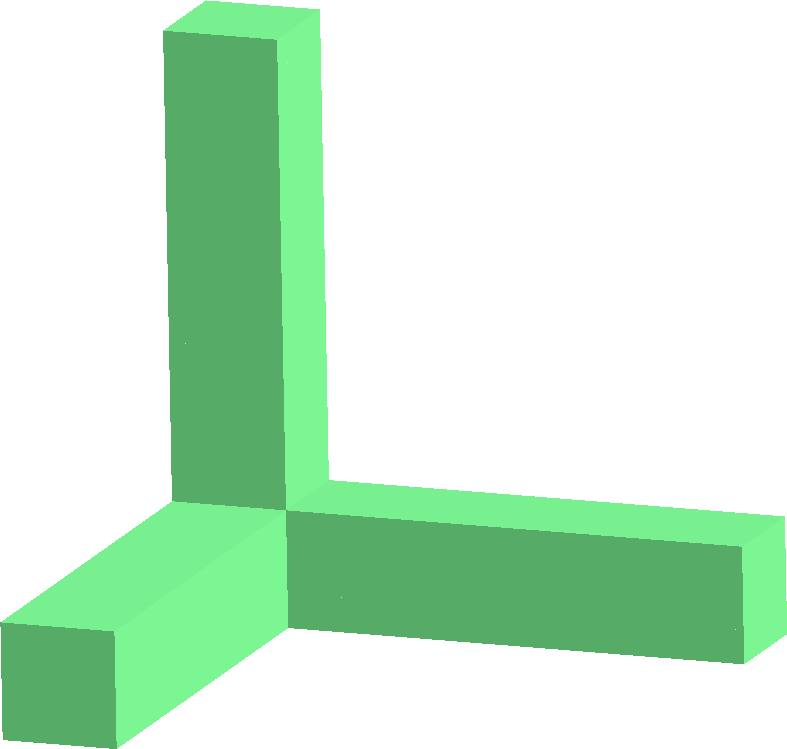}\qquad
\begin{tikzpicture}[scale=1.4]
\draw[fill=gray!30,draw=none](0,0) rectangle (2.8,1);
\draw[fill=gray!30,draw=none](0,0) rectangle (1,2.8);
\draw[black] (0,2.8)--(0,0)--(2.8,0);
\draw[black] (1,2.8)--(1,1)--(2.8,1);
\draw[black,dotted] (0,2.8)--(0,3.2);
\draw[black,dotted] (1,2.8)--(1,3.2);
\draw[black,dotted] (2.8,0)--(3.2,0);
\draw[black,dotted] (2.8,1)--(3.2,1);
\draw[black,dashed] (1,0)--(1,1);
\draw[black,dashed] (0,1)--(1,1);
\node at (2,0.45) {$\GL_+$};
\node at (0.5,0.45) {$\square$};
\node at (1,-0.2) {$1$};
\end{tikzpicture}
\quad\raisebox{0.0cm}{\includegraphics[width=5cm]{TrappedModeL}}
\caption{The tripode $\Om\subset\R^3$ (left), the domain $\GL\subset\R^2$ (center) and an eigenfunction associated with the first eigenvalue of the Dirichlet Laplacian in $\GL$. \label{Tripode}}
\end{figure}

\noindent Define
\[
\begin{array}{l}
\Pi_x\coloneqq[1;+\infty)\times(0;1)\times(0;1)\\[2pt]
\Pi_y\coloneqq(0;1)\times[1;+\infty)\times(0;1)\\[2pt]
\Pi_z\coloneqq(0;1)\times(0;1)\times[1;+\infty)\\[2pt]
\mathcal{C}\coloneqq(0;1)^3
\end{array}
\]
and consider the tripode waveguide $\Om$ represented in Figure \ref{Tripode} left such that
\begin{equation}\label{DefTripode}
\Om\coloneqq\Pi_x\cup\Pi_y\cup\Pi_z\cup\mathcal{C}.
\end{equation}
We want to show that the discrete spectrum of $A$ in $\Om$ is not empty. To proceed, we will exploit that the Dirichlet Laplacian in the 2D L-shaped domain (already met in (\ref{Lshape2D}))
\[
\GL\coloneqq\{(x,y)\in(0;+\infty)^2\,|\,x<1\mbox{ or }y<1\}=(0;1)\times(0;+\infty)\cup(0;+\infty)\times(0;1)
\]
(see Figure \ref{Tripode} center) admits an eigenvalue $\lambda_\bullet$ in its discrete spectrum (see \cite{ExSt89,ABGM91},\cite[Prop.\,4.1]{DaLR12}). Consider $\varphi\in\mH^1_0(\GL)$ a corresponding eigenfunction, \textit{i.e.} a function such that 
\begin{equation}\label{Spectral2D}
-\Delta\varphi=\lambda_\bullet\varphi\quad\mbox{ in }\GL
\end{equation}
(see Figure \ref{Tripode} right for a representation). With the Krein-Rutman theorem, one proves that $\varphi$ can be chosen such that $\varphi>0$ in $\GL$. Moreover, $\varphi$ satisfies $\varphi(x,y)=\varphi(y,x)$ for all $(x,y)\in \GL$. As said after (\ref{Lshape2D}), numerically one obtains $\lambda_\bullet\approx 9.1722 \approx 0.9293\pi^2$. Observe that this value is much closer to the threshold of the essential spectrum ($\pi^2$) than in the X-shaped geometry (see (\ref{Vp2DX})). This might be the reason why we need estimates that are sharper for the tripode than for the 6 legs geometry of \S\ref{Para6legs}.\\
\newline
Since the section of the three branches of $\Om$ is a square of side one, we have $\sigma_{\mrm{ess}}(A)=[\pi^2;+\infty)$. As a consequence, we have to exhibit some $\E\in\boldsymbol{\mX}_N(\Om)$ such that 
\[
\int_{\Om} |\curlvec\E|^2\,d\bfx-\pi^2\int_{\Om} |\E|^2\,d\bfx<0.
\]

\subsubsection*{Construction of the test field}

Define $\E_p$ such that 
\[
\E_p\coloneqq\left(\begin{array}{c}
E_x\\
E_y\\
E_z
\end{array}\right)
\]
with
\[
E_x(\bfx)=\begin{array}{|ll}
\varphi(y,z)& \mbox{ for }x<1\\
0& \mbox{ for }x>1\\
\end{array},\quad E_y(\bfx)=\begin{array}{|ll}
\varphi(x,z)& \mbox{ for }y<1\\
0& \mbox{ for }y>1\\
\end{array},\quad E_z(\bfx)=\begin{array}{|ll}
\varphi(x,y)& \mbox{ for }z<1\\
0& \mbox{ for }z>1.
\end{array}
\]
Note that contrary to what was done in the previous paragraph, we start with a field which has some symmetries and which is non-zero in all branches. Let us emphasize that we have not been able to conclude with some $\E_p$ simply defined as in (\ref{DefEpX}).\\
There holds $\E_{p}\in \boldsymbol{\mH}_N(\curlvec)$ as well as $\E_{p}\times\bfnu=0$ on $\partial\Om$. However, we have $\div\,\E_{p}\not\equiv0$ in $\Om$. Therefore we introduce $\psi\in\mH^1_0(\Om)$ such that
\begin{equation}\label{PbDefPsiBis}
\int_\Om\nabla\psi\cdot\nabla\psi'\,d\bfx=\int_\Om\E_{p}\cdot\nabla\psi'\,d\bfx,\qquad \forall\psi'\in\mH^1_0(\Om),
\end{equation}
and we set $\tilde{\E}_{p}\coloneqq\E_{p}-\nabla\psi$. This $\tilde{\E}_{p}$ belongs to $\boldsymbol{\mX}_N(\Om)$. 

\subsubsection*{First identities for the test field}
Now, our goal is to establish that the quantity
\begin{equation}\label{QtyToEstimate}
\int_{\Om} |\curlvec\tilde{\E}_{p}|^2\,d\bfx-\pi^2\int_{\Om} |\tilde{\E}_{p}|^2\,d\bfx
\end{equation}
is negative. To proceed, we start with the following statement.
\begin{lemma}
There holds
\begin{equation}\label{EstimAbove}
\dsp\int_{\Om}|\curlvec \tilde{\E}_p|^2\,d\bfx=3\lambda_\bullet\int_\GL\varphi^2\,dxdy-6\int_0^1\varphi(1,y)^2\,dy.
\end{equation}
\end{lemma}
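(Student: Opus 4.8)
The plan is to compute $\curlvec\tilde{\E}_p = \curlvec\E_p$ (since $\curlvec\nabla\psi=0$) by working branch by branch, and then exploit the threefold symmetry of the construction and the fact that $\varphi$ solves the $2$D eigenvalue problem. The key observation is that $\curlvec$ of a vector field whose components are each a function of two of the three variables produces, in each subregion, essentially $\nabla\varphi$ terms plus Dirac-type contributions on the interfaces $x=1$, $y=1$, $z=1$ where the components jump. Since $\tilde{\E}_p\in\boldsymbol{\mH}_N(\curlvec)$, the $\mL^2$ curl is computed without those interface distributions, so one just assembles the smooth pieces.

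\textbf{Key steps.} First I would split $\Om$ into the regions $\mathcal{C}$, $\Pi_x$, $\Pi_y$, $\Pi_z$ and compute $\curlvec\E_p$ on each. In the cube $\mathcal{C}=(0;1)^3$ all three components are active: with $E_x=\varphi(y,z)$, $E_y=\varphi(x,z)$, $E_z=\varphi(x,y)$ one gets $(\curlvec\E_p)_x=\partial_y E_z-\partial_z E_y=\partial_y\varphi(x,y)-\partial_z\varphi(x,z)$ and cyclic permutations. In the branch $\Pi_x$ (where $E_x=0$, $E_y=\varphi(x,z)$, $E_z=\varphi(x,y)$ but now with $y,z\in(0;1)$ and $x>1$) only some components survive, and similarly for $\Pi_y,\Pi_z$. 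Second, I would integrate $|\curlvec\E_p|^2$ over each region. The branch integrals, after using Fubini and that $\int_0^1|\nabla\varphi(x,\cdot)|^2$-type integrals reconstruct $\int_\GL|\nabla\varphi|^2\,dxdy$, should give $3\lambda_\bullet\int_\GL\varphi^2$ via $\int_\GL|\nabla\varphi|^2=\lambda_\bullet\int_\GL\varphi^2$ (integration by parts using $\varphi\in\mH^1_0(\GL)$). Third, the cube contributes cross terms like $\int_\mathcal{C}(\partial_y\varphi(x,y)-\partial_z\varphi(x,z))^2\,d\bfx$; expanding the squares produces pure squares (which combine with the branch terms to rebuild the full $\int_\GL|\nabla\varphi|^2$) plus cross products $-2\int_\mathcal{C}\partial_y\varphi(x,y)\,\partial_z\varphi(x,z)\,d\bfx$. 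These cross products separate: $\int_0^1\partial_y\varphi(x,y)\,dy\cdot\int_0^1\partial_z\varphi(x,z)\,dz$, integrated in $x$; and $\int_0^1\partial_y\varphi(x,y)\,dy=\varphi(x,1)-\varphi(x,0)=\varphi(x,1)$ since $\varphi=0$ on $\{y=0\}\subset\partial\GL$. So each cross term is $-\int_0^1\varphi(x,1)^2\,dx$, and by the symmetry $\varphi(x,y)=\varphi(y,x)$ one has $\int_0^1\varphi(x,1)^2\,dx=\int_0^1\varphi(1,y)^2\,dy$. Counting the three cyclic cross terms in the cube (each appearing with a factor $2$ from the mixed products and the $-1$ from $-2\cdot\frac12$) should yield the $-6\int_0^1\varphi(1,y)^2\,dy$ correction.

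\textbf{Main obstacle.} The delicate point is the careful bookkeeping: making sure no interface contribution at $x=1,y=1,z=1$ is erroneously included (justified because $\tilde{\E}_p\in\boldsymbol{\mH}_N(\curlvec)$, so the distributional curl has no surface part — one must check $\E_p$ itself already has continuous tangential trace across these square faces, which holds since the tangential components there are values of $\varphi$ that match), and correctly tracking the combinatorial factor $6=2\times 3$ coming from the three cyclic cross-terms in the cube together with the sign. A secondary subtlety is confirming that the ``pure square'' pieces from the cube plus the branch integrals reassemble exactly $3\int_\GL|\nabla\varphi|^2\,dxdy$ and not some other multiple; this requires matching the two-dimensional L-shaped region $\GL$ with the union of its ``corner square'' $\square=(0;1)^2$ and its two legs, which is precisely the decomposition $\GL=\square\cup\GL_+\cup(\text{vertical leg})$ drawn in Figure~\ref{Tripode}. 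Once these are in place, identity \eqref{EstimAbove} follows from \eqref{Spectral2D} and the symmetry of $\varphi$.
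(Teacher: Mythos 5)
Your proposal is correct and follows essentially the same route as the paper: decompose $\Om$ into $\mathcal{C}$ and the three branches, compute $\curlvec\E_p$ piecewise, separate the mixed products by Fubini so that they reduce to boundary values $\varphi(x,1)$ (which vanish for $x>1$ and give the three cross terms $-2\int_0^1\varphi(x,1)^2\,dx$ in the cube), and reassemble the pure squares into $3\int_\GL|\nabla\varphi|^2=3\lambda_\bullet\int_\GL\varphi^2$ via $\GL=\square\cup\GL_+\cup(\text{vertical leg})$ and the symmetry of $\varphi$. The only point to make explicit when writing it up is that the branch cross terms $\int_{\Pi_x}\partial_yE_z\,\partial_zE_y\,d\bfx$ vanish for the same Fubini-plus-boundary reason, which your sketch leaves implicit.
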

\begin{proof}
In $\Om$, we have $\curlvec \tilde{\E}_p=\curlvec \E_p$. Moreover, in $\Pi_x$ we find
\[
\curlvec \E_p=\curlvec \left(\begin{array}{c}
0\\
E_y\\
E_z
\end{array}\right)=\left(\begin{array}{c}
\partial_yE_z-\partial_zE_y\\
-\partial_xE_z\\
\partial_xE_y
\end{array}\right).
\]
Thus we obtain
\[
\begin{array}{rcl}
\dsp\int_{\Pi_x}|\curlvec \E_p|^2\,d\bfx&=&\dsp\int_{\Pi_x}(\partial_xE_y)^2+(\partial_zE_y)^2+(\partial_xE_z)^2+(\partial_yE_z)^2-2\partial_yE_z\partial_zE_y\,d\bfx.
\end{array}
\]
By observing that 
\[
\dsp\int_{\Pi_x}\partial_yE_z\partial_zE_y\,d\bfx=\int_1^{+\infty}\int_0^1 \partial_yE_z(x,y)\,dy\int_0^1\partial_zE_y(x,z)\,dzdx=0,
\]
we get 
\[
\dsp\int_{\Pi_x}|\curlvec \E_p|^2\,d\bfx=2\int_{\GL_+}|
\nabla\varphi|^2\,dxdy
\]
where $\GL_+\coloneqq(1;+\infty)\times(0;1)$. Similarly, we obtain
\[
\dsp\int_{\Pi_y}|\curlvec \E_p|^2\,d\bfx=\dsp\int_{\Pi_z}|\curlvec \E_p|^2\,d\bfx=2\int_{\GL_+}|
\nabla\varphi|^2\,dxdy.
\]
On the other hand, in $\mathcal{C}=(0;1)^3$, we have
\[
\curlvec \E_p=\left(\begin{array}{c}
\partial_yE_z-\partial_zE_y\\
\partial_zE_x-\partial_xE_z\\
\partial_xE_y-\partial_yE_x
\end{array}\right).
\]
Therefore, we find
\begin{equation}\label{TermLemma}
\dsp\int_{\mathcal{C}}|\curlvec \E_p|^2\,d\bfx=3\int_{\square}|
\nabla\varphi|^2\,dxdy-2\dsp\int_{\mathcal{C}} \partial_yE_z\partial_zE_y+\partial_xE_z\partial_zE_x+\partial_xE_y\partial_yE_x\,d\bfx
\end{equation}
with $\square=(0;1)^2$. But 
\[
\dsp\int_{\mathcal{C}} \partial_yE_z\partial_zE_y\,d\bfx=\int_0^{1}\int_0^1 \partial_yE_z(x,y)\,dy\int_0^1\partial_zE_y(x,z)\,dzdx=\int_0^{1}E_z(x,1)E_y(x,1)\,dx=\int_0^1\varphi(x,1)^2\,dx.
\]
By exploiting that $\varphi(x,y)=\varphi(y,x)$ in $\GL$, similarly we get
\[
\dsp\int_{\mathcal{C}} \partial_xE_z\partial_zE_x\,d\bfx=\dsp\int_{\mathcal{C}} \partial_xE_y\partial_yE_x\,d\bfx=\int_0^1\varphi(x,1)^2\,dx=\int_0^1\varphi(1,y)^2\,dy.
\]
With (\ref{TermLemma}), we deduce (\ref{EstimAbove}).
\end{proof}
\noindent Next we estimate the term $\int_{\Om} |\tilde{\E}_{p}|^2\,d\bfx$ in  (\ref{QtyToEstimate}). From the relation
\begin{equation}\label{TermNRJBis}
\int_{\Om}|\nabla\psi|^2\,d\bfx=\int_{\Om}\E_p\cdot\nabla\psi\,d\bfx,
\end{equation}
we obtain
\begin{equation}\label{Rel2}
\int_{\Om} |\tilde{\E}_{p}|^2\,d\bfx=\int_{\Om} |\E_{p}|^2\,d\bfx-\int_{\Om} |\nabla\psi|^2\,d\bfx =3\dsp\int_\GL\varphi^2\,dxdy-\int_{\Om} |\nabla\psi|^2\,d\bfx.
\end{equation}

\subsubsection*{Estimates of the 3D scalar function $\psi$}

Let us control the term $\int_{\Om} |\nabla\psi|^2\,d\bfx$ by $\int_\GL\varphi^2\,dxdy$. Integrating by parts in the right-hand side of (\ref{TermNRJBis}), we find
\begin{equation}\label{LShadeTermsBord}
\int_\Om|\nabla\psi|^2\,d\bfx=\int_{\Sigma_x}\hspace{-0.1cm}\varphi(y,z)\psi(1,y,z)\,dydz+\int_{\Sigma_y}\hspace{-0.1cm}\varphi(x,z)\psi(x,1,z)\,dxdz+\int_{\Sigma_z}\hspace{-0.1cm}\varphi(x,y)\psi(x,y,1)\,dxdy
\end{equation}
where $\Sigma_x\coloneqq\{1\}\times(0;1)\times(0;1)$, $\Sigma_y\coloneqq(0;1)\times\{1\}\times(0;1)$, $\Sigma_z\coloneqq(0;1)\times(0;1)\times\{1\}$. Here we have used that $\E_p$ is divergence free in $\Om\setminus\{\Sigma_x\cup\Sigma_y\cup\Sigma_z\}$. The Cauchy-Schwarz inequality in $\mL^2(\Sigma_x)$ gives
\[
\bigg|\int_{\Sigma_x}\varphi(y,z)\psi(1,y,z)\,dydz\bigg|\le \|\varphi\|_{\mL^2(\square)}\|\psi\|_{\mL^2(\Sigma_x)}.
\]
Estimating similarly the integrals on $\Sigma_y$, $\Sigma_z$ in (\ref{LShadeTermsBord}) and using that for $a,b,c>0$, $a+b+c\le\sqrt{3}(a^2+b^2+c^2)^{1/2}$, we deduce
\begin{equation}\label{EstimDep}
\int_\Om|\nabla\psi|^2\,d\bfx \le \sqrt{3}\|\varphi\|_{\mL^2(\square)}(\|\psi\|^2_{\mL^2(\Sigma_x)}+\|\psi\|^2_{\mL^2(\Sigma_y)}+\|\psi\|^2_{\mL^2(\Sigma_z)})^{1/2}.
\end{equation}
\begin{lemma}
For the function $\psi$ introduced in (\ref{PbDefPsiBis}), we have the estimate
\begin{equation}\label{EstimEtaQua}
\|\psi\|^2_{\mL^2(\Sigma_x)}+\|\psi\|^2_{\mL^2(\Sigma_y)}+\|\psi\|^2_{\mL^2(\Sigma_z)} \le \cfrac{4\sqrt{2}}{\pi(8+\sqrt{2})} \int_{\Om}|\nabla\psi|^2\,d\bfx.
\end{equation}
\end{lemma}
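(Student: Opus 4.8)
The plan is to argue as in the proof of Lemma~\ref{LemmaProof6Legs1}: I would control each trace norm $\|\psi\|^2_{\mL^2(\Sigma_i)}$, $i\in\{x,y,z\}$, in two complementary ways --- once from the infinite branch $\Pi_i$, once from the central cube $\mathcal{C}$ --- and then combine the two families of estimates with suitable weights so that the right-hand sides reassemble exactly into $\int_\Om|\nabla\psi|^2\,d\bfx$. The key preliminary remark, which makes the ``cube'' estimate possible, is that the three faces $\{x=0\}$, $\{y=0\}$, $\{z=0\}$ of $\overline{\mathcal{C}}$ lie in $\partial\Om$ (no branch is attached there), so the trace of $\psi\in\mH^1_0(\Om)$ vanishes on each of them.

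For the branch estimate, say $i=x$ (the cases $i=y,z$ being identical), I would start from $\psi(1,y,z)^2=-2\int_1^{+\infty}\psi\,\partial_x\psi\,dx$ (legitimate since $\psi\to0$ as $x\to+\infty$), integrate over $(y,z)\in(0;1)^2$, apply Young's inequality with a parameter, and then use the transverse Poincar\'e inequality in $\Pi_x$, whose cross-section is the unit square with first Dirichlet eigenvalue $2\pi^2$ and on whose lateral walls $\psi$ vanishes; balancing the parameter gives
\[
\sqrt{2}\,\pi\,\|\psi\|^2_{\mL^2(\Sigma_x)}\le\int_{\Pi_x}|\nabla\psi|^2\,d\bfx.
\]
For the cube estimate, I would use $\psi(0,y,z)=0$ to write $\psi(1,y,z)^2=2\int_0^1\psi\,\partial_x\psi\,dx$, then Young's inequality and integration in $(y,z)$, and bound $\int_{\mathcal{C}}\psi^2\,d\bfx$ by the one-dimensional Poincar\'e inequality on $(0;1)$ with a Dirichlet condition at the endpoint $0$ (first eigenvalue $\pi^2/4$); balancing the parameter yields
\[
\cfrac{\pi}{4}\,\|\psi\|^2_{\mL^2(\Sigma_x)}\le\int_{\mathcal{C}}(\partial_x\psi)^2\,d\bfx.
\]

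To conclude, I would sum the three branch estimates (the $\Pi_i$ being pairwise disjoint) and, separately, the three cube estimates (using $(\partial_x\psi)^2+(\partial_y\psi)^2+(\partial_z\psi)^2=|\nabla\psi|^2$), then add the two resulting inequalities; since $\Om=\mathcal{C}\cup\Pi_x\cup\Pi_y\cup\Pi_z$, the right-hand sides collapse to $\int_\Om|\nabla\psi|^2\,d\bfx$ and one gets
\[
\Big(\sqrt{2}\,\pi+\cfrac{\pi}{4}\Big)\Big(\|\psi\|^2_{\mL^2(\Sigma_x)}+\|\psi\|^2_{\mL^2(\Sigma_y)}+\|\psi\|^2_{\mL^2(\Sigma_z)}\Big)\le\int_\Om|\nabla\psi|^2\,d\bfx,
\]
which is (\ref{EstimEtaQua}) because $(\sqrt{2}\,\pi+\pi/4)^{-1}=4\sqrt{2}/(\pi(8+\sqrt{2}))$. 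I expect the main obstacle to be precisely this final bookkeeping: the cube estimate must be written purely in terms of $(\partial_i\psi)^2$ --- hence the one-dimensional Poincar\'e inequality \emph{along} the $i$-direction across $\mathcal{C}$, rather than any coarser bound by the full gradient --- and the weights $\sqrt{2}\,\pi$ and $\pi/4$ must be chosen so that summing over $i$ telescopes the right-hand sides into $\int_\Om|\nabla\psi|^2\,d\bfx$ with no subregion counted twice. Each individual ingredient (Young's inequality, the transverse Poincar\'e inequality in the unit square, the one-dimensional Poincar\'e inequality with one Dirichlet endpoint) is elementary.
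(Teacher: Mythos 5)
Your proof is correct and follows essentially the same route as the paper: a branch estimate using the transverse Poincar\'e constant $2\pi^2$ of the unit square, a cube estimate exploiting that $\psi$ vanishes on the faces $x=0$, $y=0$, $z=0$, and a final combination with the weights $\sqrt{2}\,\pi$ and $\pi/4$. The only (immaterial) difference is in the cube step, where you apply the one-dimensional Poincar\'e constant $\pi^2/4$ direction by direction while the paper uses the three-dimensional mixed Dirichlet--Neumann eigenvalue $3\pi^2/4$ of $\mathcal{C}$; since the latter is the sum of the three former, both yield the same bound $\frac{4}{\pi}\int_{\mathcal{C}}|\nabla\psi|^2\,d\bfx$ and hence the same constant in (\ref{EstimEtaQua}).
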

\begin{proof}
We start by writing 
\[
\|\psi\|_{\mL^2(\Sigma_z)}^2 =\dsp\int_0^1\int_0^1\psi^2(x,y,1)\,dxdy=-\dsp\int_{\Pi_z}2\psi\partial_z\psi\,d\bfx\le\sqrt{2\pi^2}\dsp\int_{\Pi_z}\psi^2\,d\bfx+\cfrac{1}{\sqrt{2\pi^2}}\dsp\int_{\Pi_z}(\partial_z\psi)^2\,d\bfx.
\]
From the Poincar\'e inequality, for all $\phi\in\mH^1(\Pi_z)$ such that $\phi=0$ on $\partial\Pi_z\cap\partial\Om$,
\[
2\pi^2\dsp\int_{\Pi_z}\phi^2\,d\bfx \le \dsp\int_{\Pi_z}(\partial_x\phi)^2+(\partial_y\phi)^2\,d\bfx,
\]
we deduce 
\[
\|\psi\|_{\mL^2(\Sigma_z)}^2 \le \cfrac{1}{\sqrt{2\pi^2}}\int_{\Pi_z}|\nabla\psi|^2\,d\bfx.
\]
Deriving similar estimates in $\Pi_x$ and $\Pi_y$, we obtain 
\begin{equation}\label{EstimT1bis}
\|\psi\|^2_{\mL^2(\Sigma_x)}+\|\psi\|^2_{\mL^2(\Sigma_y)}+\|\psi\|^2_{\mL^2(\Sigma_z)} \le \cfrac{1}{\sqrt{2\pi^2}}\int_{\Pi_x\cup\Pi_y\cup\Pi_z}|\nabla\psi|^2\,d\bfx.
\end{equation}
As in (\ref{EstimT1bisX}), it is tempting  to control the right-hand side of (\ref{EstimT1bis}) by $\int_{\Om}|\nabla\psi|^2\,d\bfx$. However this is too rough. To improve the estimate, let us exploit the quantity $\int_{\mathcal{C}}|\nabla\psi|^2\,d\bfx$. More precisely, writing, for all $\eta>0$,
\[
\|\psi\|_{\mL^2(\Sigma_z)}^2 =\dsp\int_0^1\int_0^1\psi^2(x,y,1)\,dxdy=\int_{\mathcal{C}}2\psi\partial_z\psi\,d\bfx
\le \eta\int_{\mathcal{C}}\psi^2\,d\bfx+\eta^{-1}\int_{\mathcal{C}}(\partial_z\psi)^2\,d\bfx,
\]
and deriving similar inequalities for $\|\psi\|_{\mL^2(\Sigma_x)}$, $\|\psi\|_{\mL^2(\Sigma_x)}$, we find 
\begin{equation}\label{EstimInterEta}
\|\psi\|^2_{\mL^2(\Sigma_x)}+\|\psi\|^2_{\mL^2(\Sigma_y)}+\|\psi\|^2_{\mL^2(\Sigma_z)}
\le\dsp 3\eta\int_{\mathcal{C}}\psi^2\,d\bfx+\eta^{-1}\int_{\mathcal{C}}|\nabla\psi|^2\,d\bfx . 
\end{equation}
Since the first eigenvalue of the problem
\[
\begin{array}{|rcll}
-\Delta \gamma&=&\tau\gamma&\mbox{ in }\mathcal{C}\\[2pt]
\gamma&=& 0 &\mbox{ on }\{\bfx\in\partial\mathcal{C}\,|\,x=0\mbox{ or }y=0\mbox{ or }z=0\}\\[2pt]
\bfnu\cdot\nabla\gamma&=& 0 &\mbox{ on }\{\bfx\in\partial\mathcal{C}\,|\,x=1\mbox{ or }y=1\mbox{ or }z=1\}
\end{array}
\]
is equal to $3\pi^2/4$, we have the Poincar\'e estimate
\[
\cfrac{3\pi^2}{4}\int_{\mathcal{C}}\psi^2\,d\bfx \le \int_{\mathcal{C}}|\nabla\psi|^2\,d\bfx.
\]
Inserting it in (\ref{EstimInterEta}), we obtain 
\[
\|\psi\|^2_{\mL^2(\Sigma_x)}+\|\psi\|^2_{\mL^2(\Sigma_y)}+\|\psi\|^2_{\mL^2(\Sigma_z)} \le \Big(\cfrac{4\eta}{\pi^2}+\eta^{-1}\Big) \int_{\mathcal{C}}|\nabla\psi|^2\,d\bfx.
\]
Choosing the optimal $\eta=\pi/2$ yields 
\begin{equation}\label{EstimEtaTer}
\|\psi\|^2_{\mL^2(\Sigma_x)}+\|\psi\|^2_{\mL^2(\Sigma_y)}+\|\psi\|^2_{\mL^2(\Sigma_z)} \le \cfrac{4}{\pi} \int_{\mathcal{C}}|\nabla\psi|^2\,d\bfx.
\end{equation}
By multiplying (\ref{EstimT1bis}), (\ref{EstimEtaTer}) respectively by $\sqrt{2\pi^2}$, $\pi/4$ and summing the two resulting inequalities, we get (\ref{EstimEtaQua}).
\end{proof}
\noindent From (\ref{EstimDep}), (\ref{EstimEtaQua}), we derive 
\begin{equation}\label{EstimSuite}
\int_\Om|\nabla\psi|^2\,d\bfx \le \cfrac{12\sqrt{2}}{\pi(8+\sqrt{2})}\,\|\varphi\|^2_{\mL^2(\square)}.
\end{equation}
Gathering (\ref{EstimAbove}), (\ref{Rel2}) and (\ref{EstimSuite}), we find
\begin{equation}\label{CalculImp}
\begin{array}{ll}
&\dsp\int_{\Om} |\curlvec\tilde{\E}_{p}|^2\,d\bfx-\pi^2\int_{\Om} |\tilde{\E}_{p}|^2\,d\bfx \\[10pt] =&3(\lambda_\bullet-\pi^2)\dsp\int_\GL\varphi^2\,dxdy+\pi^2\int_{\Om} |\nabla\psi|^2\,d\bfx-6\int_0^1\varphi(1,y)^2\,dy\\[10pt]
\le&3(\lambda_\bullet-\pi^2)\dsp\int_\GL\varphi^2\,dxdy+\cfrac{12\sqrt{2}\pi}{(8+\sqrt{2})}\,\dsp\int_{\square}\varphi^2\,dxdy -6\int_0^1\varphi(1,y)^2\,dy.
\end{array}
\end{equation}

\subsubsection*{Estimates of the 2D scalar function $\varphi$}

Now we have to compare $\int_\GL\varphi^2\,dxdy$, $\int_{\square}\varphi^2\,dxdy$, $\int_0^1\varphi(1,y)^2\,dy$. The technique based on the Poincar\'e-Friedrichs inequality (\ref{Fried_ineqX}) presented in the previous paragraph revealed insufficient to show that the right-hand side of (\ref{CalculImp}) is negative. This is why we decided to work with an explicit representation of $\varphi$ obtained by using decomposition in Fourier series.\\
\newline
Since $\varphi(x,y)=\varphi(y,x)$ in $\GL$, there holds 
\begin{equation}\label{DecompoTermL2}
\|\varphi\|^2_{\mL^2(\GL)}=2\|\varphi\|^2_{\mL^2(\GL_+)}+\|\varphi\|^2_{\mL^2(\square)}.
\end{equation}
Set $I=(0;1)$ and for $n\in\N^\ast$, $\xi_n(t)=\sqrt{2}\sin(n\pi t)$. We have the representation
\[
\varphi(x,y)=\sum_{n=1}^{+\infty} \alpha_n e^{-\beta_n(x-1)}\xi_n(y)\qquad\mbox{ in }\GL_+, 
\] 
with $\beta_n\coloneqq\sqrt{n^2\pi^2-\lambda_\bullet}$ and $\alpha_n\coloneqq\int_I \varphi(1,t)\xi_n(t)\,dt$. This yields 
\[
\int_0^1\varphi(1,y)^2\,dy=\sum_{n=1}^{+\infty} \alpha^2_n \qquad\mbox{ and }\qquad \|\varphi\|^2_{\mL^2(\GL_+)} = \sum_{n=1}^{+\infty} \cfrac{\alpha^2_n }{2\beta_n}\,.
\]
Next we compute $\|\varphi\|^2_{\mL^2(\square)}$. Using again that $\varphi(x,y)=\varphi(y,x)$ in $\GL$, we obtain for $(x,y)\in\square$ 
\[
\varphi(x,y)=\sum_{n=1}^{+\infty} \alpha_n\,\xi_n(y)\,\cfrac{\sinh(\beta_nx)}{\sinh(\beta_n)}+\sum_{n=1}^{+\infty} \alpha_n\,\xi_n(x)\,\cfrac{\sinh(\beta_ny)}{\sinh(\beta_n)}\,.
\]
Set
\[
g(x,y)=\sum_{n=1}^{+\infty} \alpha_n\,\xi_n(y)\,\cfrac{\sinh(\beta_nx)}{\sinh(\beta_n)}
\]
so that $\varphi(x,y)=g(x,y)+g(y,x)$ and 
\[
\|\varphi\|^2_{\mL^2(\square)} =2\|g\|^2_{\mL^2(\square)}+2\int_{\square}g(x,y)g(y,x)\,dxdy.
\]
We have
\[
\|g\|^2_{\mL^2(\square)}=\sum_{n=1}^{+\infty} \alpha^2_n \int_I \cfrac{\sinh(\beta_nt)^2}{\sinh(\beta_n)^2}\,dt=\sum_{n=1}^{+\infty} \alpha^2_n \,\cfrac{e^{2\beta_n} - e^{-2\beta_n} - 4\beta_n}{8\sinh(\beta_n)^2\beta_n}\,.
\]
On the other hand, we can write
\[
\int_{\square}g(x,y)g(y,x)\,dxdy = \sum_{n=1}^{+\infty}\sum_{p=1}^{+\infty} \cfrac{\alpha_n\alpha_p}{\sinh(\beta_n)\sinh(\beta_p)}\,J_{np}J_{pn}
\]
with 
\[
J_{np}=\int_{I}\xi_n(t)\sinh(\beta_pt)\,dt=\cfrac{(-1)^{n+1}n\pi\sqrt{2}\sinh(\beta_p)}{(n^2\pi^2+\beta_p^2)}\,.
\]
Hence, using that $\beta_n^2=n^2\pi^2-\lambda_\bullet\ge\pi^2-\lambda_\bullet>0$ for all $n\in\N^\ast$, we obtain 
\[
\begin{array}{rcl}
\dsp\left|\int_{\square}g(x,y)g(y,x)\,dxdy\right| &=& \dsp\left|\sum_{n=1}^{+\infty}\sum_{p=1}^{+\infty} 2\,\cfrac{(-1)^{n+1}n\pi\alpha_n}{n^2\pi^2+\beta_p^2}\,\cfrac{(-1)^{p+1}p\pi\alpha_p}{p^2\pi^2+\beta_n^2}\right|\\[14pt]
& \le & \dsp\sum_{n=1}^{+\infty}\sum_{p=1}^{+\infty} 2\,\cfrac{n\pi|\alpha_n|}{n^2\pi^2+\beta_p^2}\,\cfrac{p\pi|\alpha_p|}{p^2\pi^2+\beta_n^2}|
\leq\left(\sum_{n=1}^{+\infty}\cfrac{\sqrt{2}n\pi}{(n^2+1)\pi^2-\lambda_\bullet}\,|\alpha_n|\right)^2.
\end{array}
\]
Using the Cauchy-Schwarz inequality in $\ell^2(\N)$, the set of square summable sequences, this gives  
\begin{equation}\label{Termg2bis}
\bigg|\int_{\square}g(x,y)g(y,x)\,dxdy\bigg| \le C_{\square}\,\sum_{n=1}^{+\infty} \alpha^2_n\qquad\mbox{ with }C_{\square}\coloneqq\sum_{n=1}^{+\infty} \cfrac{2n^2\pi^2}{((n^2+1)\pi^2-\lambda_\bullet)^2}\,.
\end{equation}
\subsubsection*{Conclusion}
Inserting (\ref{DecompoTermL2}) in (\ref{CalculImp}) and using (\ref{DecompoTermL2})--(\ref{Termg2bis}),  we arrive at 
\[
\begin{array}{l}
\phantom{\le}\dsp\int_{\Om} |\curlvec\tilde{\E}_{p}|^2\,d\bfx-\pi^2\int_{\Om} |\tilde{\E}_{p}|^2\,d\bfx \\[10pt] 
\le 2C_1\dsp\int_{\GL_+}\hspace{-0.2cm}\varphi^2\,dxdy+C_2\dsp\int_{\square}\varphi^2\,dxdy -6\int_0^1\varphi(1,y)^2\,dy\le  \dsp\sum_{n=1}^{+\infty}q(n)\alpha_n^2
\end{array}
\]
with 
\[
C_1\coloneqq3(\lambda_\bullet-\pi^2),\quad C_2\coloneqq\bigg(3(\lambda_\bullet-\pi^2)+\cfrac{12\sqrt{2}\pi}{(8+\sqrt{2})}\bigg),\quad q(n)\coloneqq\cfrac{C_1}{\beta_n}+C_2\bigg(2C_{\square}+\cfrac{e^{2\beta_n} - e^{-2\beta_n} - 4\beta_n}{4\sinh(\beta_n)^2\beta_n}\bigg)-6.
\]
We find that $C_2$ is positive ($C_2\approx3.571$). Thus we can write
\begin{equation}\label{Calcqn}
q(n) \le \cfrac{C_1}{\beta_n}+C_2\bigg(2C_{\square}+\cfrac{e^{2\beta_n}}{4\sinh(\beta_n)^2\beta_n}\bigg)-6.
\end{equation}
Using that $4C_1+C_2<0$, it is simple to prove that the quantity $C_1/\beta_n+C_2e^{2\beta_n}/(4\sinh(\beta_n)^2\beta_n)$ is negative for all $n\in\N^\ast$. Therefore from (\ref{Calcqn}), we obtain $\sup_{n\in\N^\ast}q(n) \le 2C_{\square}C_2-6\approx -3.8205$ because $C_\square\approx0.3052$. Thus the right-hand side in (\ref{CalculImp}) is negative and the min-max principle ensures that $\sigma_{\mrm{d}}(A)\ne\emptyset$. As in the previous paragraph, we do not state the result as a proposition because the final computation is based on the numerical value of $\lambda_\bullet$, the first eigenvalue of the 2D problem (\ref{Spectral2D}). However we have great confidence in the calculation of that quantity. 

\begin{remark}
Similar to Remark \ref{RmkAppThm}, we find that the Dirichlet Laplacian in the tripode $\Om$ defined in (\ref{DefTripode}) 
has a non-empty discrete spectrum with a smallest eigenvalue approximately equal to $16.98>\pi^2$. Therefore, we could not simply apply Theorem \ref{ThmBigRes} to infer that $\sigma_{\mrm{d}}(A)\ne\emptyset$.
\end{remark}

\section{Perturbation of the material coefficients}\label{SectionVariableCoef}

Up to now we have provided examples of homogeneous waveguides with well-chosen resonators or particular geometries for which we can prove existence of eigenvalues.  In this section, we wish to show how local perturbations of the dielectric permittivity $\eps$ and/or the magnetic permeability $\mu$ can support trapped modes.\\
\newline
Define the straight waveguide 
\[
\Om\coloneqq S\times\R\subset\R^3
\]
where $S$ is a simply connected bounded domain of $\R^2$ with Lipschitz boundary $\partial S$. We assume that $\eps$, $\mu$ are scalar real valued functions which belong to $\mL^{\infty}(\Om)$ with $\eps$, $\mu\ge c$ for some constant $c>0$. Additionally we assume that $\eps-1$ and $\mu-1$ have compact support. In inhomogeneous waveguides, the analysis of the spectrum of the electric problem leads to consider the system
\begin{equation}\label{DefPb0Inhomo}
\begin{array}{|rcll}
\curlvec\left(\mu^{-1}\curlvec\E\right)&=&\lambda\eps\E&\mbox{ in }\Om\\[2pt]
\div(\eps\E)&=&0&\mbox{ in }\Om\\[2pt]
\E\times\bfnu&=&0&\mbox{ on }\partial\Om.
\end{array}
\end{equation}
We endow $\boldsymbol{\mL}^2(\Om)$ with the weighted inner product 
\[
(\E,\E')\mapsto\int_{\Om}\eps\E\cdot\E'\,d\bfx
\]
and still denote by $A$ the unbounded operator of the Hilbert space 
\[
\boldsymbol{\mH}(\div;\eps,0)\coloneqq\{\E\in\boldsymbol{\mL}^2(\Om)\,|\,\div(\eps\E)=0\mbox{ in }\Om\}
\]
associated with (\ref{DefPb0Inhomo}) such that 
\[
\begin{array}{|rcl}
D(A)&=&\{\E\in\boldsymbol{\mX}_N(\eps)\,|\,\curlvec(\mu^{-1}\curlvec\E)\in\boldsymbol{\mL}^2(\Om)\}\\[4pt]
A\E&=&\eps^{-1}\curlvec(\mu^{-1}\curlvec\E).
\end{array}
\]
Here the space $\boldsymbol{\mX}_N(\eps)$ is defined by
\[
\boldsymbol{\mX}_N(\eps)\coloneqq\boldsymbol{\mH}_N(\curlvec)\cap \boldsymbol{\mH}(\div;\eps,0).
\]
This operator $A$ is self-adjoint and non-negative. Again, according to Proposition \ref{DefSigmaEss}, we have 
\[
\sigma_{\mrm{ess}}(A)=[\lambda_N;+\infty),
\]
where $\lambda_N$ is the first positive eigenvalue of the 2D Neumann Laplacian in $S$. With the min-max principle, to show that $A$ has discrete spectrum ($\sigma_{\mrm{d}}(A)\ne\emptyset$), we have to exhibit some $\E\in\boldsymbol{\mX}_N(\eps)\setminus\{0\}$ such that
\begin{equation}\label{MinMaxNonHomo}
\cfrac{\dsp\int_{\Om}\mu^{-1}|\curlvec\E|^2\,d\bfx}{\dsp\int_{\Om}\eps|\E|^2\,d\bfx}<\lambda_N\qquad\Leftrightarrow\qquad \dsp\int_{\Om}\mu^{-1}\,|\curlvec\E|^2-\lambda_N\eps|\E|^2\,d\bfx<0.
\end{equation}
From this, it is natural to expect that discrete spectrum for $A$ appears when increasing the values of $\eps$, $\mu$ from the reference situation $\eps\equiv1$, $\mu\equiv1$. This is what we justify below, the main difficulty being that the space $\boldsymbol{\mX}_N(\eps)$ depends on $\eps$.

\subsection{$z$-variable $\eps$ and variable $\mu$}
Assume in this paragraph that $\eps$ is a function of the $z$-variable only, \textit{i.e.} $\eps(x,y,z)=\eps(z)$. On the other hand, $\mu$ is allowed to depend on $x$, $y$ and $z$. To achieve (\ref{MinMaxNonHomo}), classically, we work with the test function $\E_\alpha$ defined from the generalized eigenfunction associated to the bottom of the essential spectrum $\lambda_N$ such that for $\alpha>0$ constant, 
\begin{equation}\label{DefEModeTE}
\E_\alpha(\bfx)=\left(\begin{array}{c}
\curlvec_{\mrm{2D}}\,\varphi_N(x,y)\\
0
\end{array}\right)e^{-\alpha|z|}.
\end{equation}
Here $\varphi_N$ is an eigenfunction of the Neumann Laplacian in $S$ associated with $\lambda_N$ such that $\|\varphi_N\|_{\mL^2(S)}=1$. For $\alpha>0$, we have $\E_\alpha\in\boldsymbol{\mH}_N(\curlvec)$ and since $\eps$ depends only on $z$, there holds $\div(\eps\E_\alpha)=0$ in $\Om$. Thus $\E_\alpha$ belongs to $\boldsymbol{\mX}_N(\eps)$.\\
\newline
A direct calculation yields
\[
\curlvec\E_\alpha(\bfx)=\left(\begin{array}{c}
-\alpha\,\mrm{sign}(z)\nabla\varphi_N(x,y)\\[2pt]
\lambda_N\varphi_N(x,y)
\end{array}\right)e^{-\alpha|z|},
\]
so that we obtain
\begin{equation}\label{RelationHomogeneous}
\begin{array}{rcl}
\dsp\int_{\Om}\hspace{-0.05cm}|\curlvec\E_\alpha|^2-\lambda_N|\E_\alpha|^2\,d\bfx &\hspace{-0.2cm}=&\hspace{-0.2cm} \dsp\alpha^2\int_{\Om}|\nabla\varphi_N|^2e^{-2\alpha |z|}\,d\bfx+\lambda_N\int_{\Om}(\lambda_N\varphi_N^2-|\nabla\varphi_N|^2)e^{-2\alpha |z|}\,d\bfx\\[10pt]
&\hspace{-0.2cm}=&\hspace{-0.2cm} \dsp\alpha^2\int_{\Om}|\nabla\varphi_N|^2e^{-2\alpha |z|}\,d\bfx=\alpha\lambda_N.
\end{array}
\end{equation}
This allows us to write
\begin{equation}\label{CriterionAlpha}
\int_{\Om}\mu^{-1}\,|\curlvec\E_\alpha|^2-\lambda_N\eps|\E_\alpha|^2\,d\bfx = \alpha\lambda_N+\int_{\Om}(\mu^{-1}-1)\,|\curlvec\E_\alpha|^2-\lambda_N(\eps-1)|\E_\alpha|^2\,d\bfx.
\end{equation}
Since $\mu^{-1}-1$ and $\eps-1$ are compactly supported, we can apply the Lebesgue dominated convergence theorem to take the limit as $\alpha$ tends to zero above. We find that the quantity in (\ref{CriterionAlpha}) converges to 
\[
\int_{\Om}(\mu^{-1}-1)\,\lambda^2_N\varphi_N^2+(1-\eps)\lambda_N|\nabla\varphi_N|^2\,d\bfx=\lambda^2_N\int_{\Om}((\eps\mu)^{-1}-1)\,\eps\varphi_N^2\,d\bfx
\]
(above we use again that $\eps$ depends only on $z$). Thus we can state the following result:
\begin{proposition}\label{Proposition1Variable}
Assume that $\eps$ depends only on $z$. If $\eps$, $\mu$ are such that
\begin{equation}\label{CriterionEpszvariable}
\int_{\Om}((\eps\mu)^{-1}-1)\,\eps\varphi_N^2\,d\bfx<0,
\end{equation}
where $\varphi_N$ is an eigenfunction of the Neumann Laplacian in $S$ associated with $\lambda_N$, then $\sigma_{\mrm{d}}(A)\ne\emptyset$. 
\end{proposition}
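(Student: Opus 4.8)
The plan is to use the min-max characterisation (\ref{MinMaxNonHomo}) together with the slowly decaying test fields $\E_\alpha$ defined in (\ref{DefEModeTE}): it will suffice to show that, for the stated $\eps,\mu$, one can pick $\alpha>0$ small so that $\int_{\Om}\mu^{-1}|\curlvec\E_\alpha|^2-\lambda_N\eps|\E_\alpha|^2\,d\bfx<0$, since then $\inf\sigma(A)<\lambda_N$ and, as $\sigma_{\mrm{ess}}(A)=[\lambda_N;+\infty)$, the operator $A$ must have an eigenvalue strictly below $\lambda_N$, i.e. $\sigma_{\mrm{d}}(A)\ne\emptyset$.

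First I would check that $\E_\alpha\in\boldsymbol{\mX}_N(\eps)\setminus\{0\}$ for every $\alpha>0$. Membership in $\boldsymbol{\mH}_N(\curlvec)$ is immediate from the explicit formulas for $\E_\alpha$ and $\curlvec\E_\alpha$ and from $\curlvec_{\mrm{2D}}\varphi_N\cdot\boldsymbol n=\partial_{\boldsymbol n}\varphi_N=0$ on $\partial S$. The crucial point is the divergence-free constraint: since $\E_\alpha$ has vanishing $z$-component and $\eps=\eps(z)$, one has $\div(\eps\E_\alpha)=\eps(z)\,e^{-\alpha|z|}\div_{\mrm{2D}}(\curlvec_{\mrm{2D}}\varphi_N)=0$ in $\Om$. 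This is precisely where the hypothesis ``$\eps$ depends only on $z$'' enters, and it is what makes the difficulty flagged before the proposition — that $\boldsymbol{\mX}_N(\eps)$ depends on $\eps$ — harmless for this particular family of test fields.

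Next I would reduce to the homogeneous computation already carried out. Using the explicit $\curlvec\E_\alpha$ and identity (\ref{RelationHomogeneous}) (which rests only on $\|\varphi_N\|_{\mL^2(S)}=1$, $\|\nabla\varphi_N\|^2_{\mL^2(S)}=\lambda_N$ and $\int_\R e^{-2\alpha|z|}\,dz=1/\alpha$), one writes the left-hand side of (\ref{MinMaxNonHomo}) evaluated at $\E_\alpha$ as $\alpha\lambda_N$ plus the perturbation term $\int_{\Om}(\mu^{-1}-1)|\curlvec\E_\alpha|^2-\lambda_N(\eps-1)|\E_\alpha|^2\,d\bfx$, i.e. formula (\ref{CriterionAlpha}). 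Because $\mu^{-1}-1$ and $\eps-1$ are compactly supported, while for $\alpha\in(0,1]$ the integrands $|\curlvec\E_\alpha|^2$ and $|\E_\alpha|^2$ are bounded on that compact set by the fixed $\mL^1$ function $(1+\lambda_N^2)(\varphi_N^2+|\nabla\varphi_N|^2)$, dominated convergence lets me pass to the limit $\alpha\to0^+$: the $\alpha\lambda_N$ term vanishes, and the perturbation term converges to $\int_{\Om}(\mu^{-1}-1)\lambda_N^2\varphi_N^2+(1-\eps)\lambda_N|\nabla\varphi_N|^2\,d\bfx$, which, using once more $\eps=\eps(z)$, $\int_S\varphi_N^2=1$ and $\int_S|\nabla\varphi_N|^2=\lambda_N$, rewrites as $\lambda_N^2\int_{\Om}((\eps\mu)^{-1}-1)\eps\varphi_N^2\,d\bfx$. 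Hence
\[
\lim_{\alpha\to0^+}\int_{\Om}\mu^{-1}|\curlvec\E_\alpha|^2-\lambda_N\eps|\E_\alpha|^2\,d\bfx=\lambda_N^2\int_{\Om}\big((\eps\mu)^{-1}-1\big)\eps\varphi_N^2\,d\bfx,
\]
which is strictly negative under hypothesis (\ref{CriterionEpszvariable}); so for $\alpha$ small enough $\E_\alpha$ satisfies the strict inequality in (\ref{MinMaxNonHomo}), and the conclusion follows from the min-max principle.

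The main obstacle — and essentially the only nonroutine step — is controlling the limit $\alpha\to0$: one must be sure on the one hand that the unavoidable ``boundary-layer'' contribution $\alpha\lambda_N$ (stemming from the $\alpha^2|\nabla\varphi_N|^2$ part of $|\curlvec\E_\alpha|^2$, whose integral against $e^{-2\alpha|z|}$ decays only like $\alpha$) really tends to $0$, and on the other hand that the perturbation integral has a finite, computable, strictly negative limit; the compact-support assumption on $\eps-1$ and $\mu-1$ is exactly what guarantees both, via the dominated convergence theorem. Everything else is the separation-of-variables bookkeeping already performed in the homogeneous case.
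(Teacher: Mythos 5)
Your proposal is correct and follows essentially the same route as the paper: the test field $\E_\alpha$ of (\ref{DefEModeTE}), the observation that $\eps=\eps(z)$ makes $\div(\eps\E_\alpha)=0$, the identity (\ref{RelationHomogeneous}) giving the $\alpha\lambda_N$ term, and dominated convergence to reach the limit $\lambda_N^2\int_{\Om}((\eps\mu)^{-1}-1)\eps\varphi_N^2\,d\bfx$. The only (harmless) slip is in the boundary-condition check: the quantity that must vanish is the tangential component $\curlvec_{\mrm{2D}}\varphi_N\cdot\boldsymbol{\tau}=-\partial_{\boldsymbol n}\varphi_N$ along $\partial S$, not $\curlvec_{\mrm{2D}}\varphi_N\cdot\boldsymbol n$, but the fact you invoke ($\partial_{\boldsymbol n}\varphi_N=0$) is indeed what gives $\E_\alpha\times\bfnu=0$.
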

\begin{remark}
If we introduce the wave celerity $c>0$ such that $c^{2}=(\eps\mu)^{-1}$, we observe that the condition (\ref{CriterionEpszvariable}) is satisfied if $c\le1$ in $\Om$ with $c<1$ in a set with non-empty interior. It is the case for example when
$\eps\ge1$, $\mu\ge1$ in $\Om$ with $\eps>1$ or $\mu>1$ in a set with non-empty interior. Note that since $\varphi_N$ varies, it can also hold for certain $\eps$, $\mu$ such that $c-1$ is sign-changing. 
\end{remark}
\begin{remark}
When $\lambda_N$ is a double eigenvalue, for example when $S$ is a square, the technique above ensures that the total multiplicity of $\sigma_{\mrm{d}}(A)$ is at least two if $\eps$, $\mu$ satisfy the assumptions of Proposition \ref{Proposition1Variable}. Indeed if $\varphi_N^i$, $i=1,2$ are linearly independent eigenfunctions of the Neumann Laplacian associated with $\lambda_N$, define $\E_\alpha^i$ as in (\ref{DefEModeTE}) with $\varphi_N$ replaced by $\varphi_N^i$. It is easy to show that $\E_\alpha^1$, $\E_\alpha^2$ are orthogonal in $\boldsymbol{\mX}_N(\eps)$ for any $\alpha>0$. Then we conclude classically with the min-max principle.
\end{remark}
\noindent The constraint that $\eps$ must depend only on $z$ seems quite artificial and appears only for technical reasons. Let us see how to weaken it.

\subsection{Variable $\eps$ and $\mu$}

We use the same notation as in the previous paragraph and work again with the $\E_\alpha$ defined in (\ref{DefEModeTE}). When $\eps$ varies also in $x,y$, in general we have $\div(\eps\E_\alpha)\not\equiv0$ in $\Om$. This leads us to introduce the function $\psi_\alpha\in\mH^1_0(\Om)$ such that, for all $\psi'\in\mH^1_0(\Om)$,
\begin{equation}\label{DefPsiVariableDouble}
\begin{array}{rcl}
\dsp\int_\Om \eps\nabla\psi_\alpha\cdot\nabla\psi'\,d\bfx=\int_\Om \eps\E_\alpha\cdot\nabla\psi'\,d\bfx&=&\dsp\int_\Om\E_\alpha\cdot\nabla\psi'\,d\bfx+\int_\Om(\eps-1)\E_\alpha\cdot\nabla\psi'\,d\bfx\\[10pt]
&=&\dsp\int_\Om(\eps-1)\E_\alpha\cdot\nabla\psi'\,d\bfx
\end{array}
\end{equation}
 (to obtain the last equality, we have used that $\div\,\E_\alpha=0$ in $\Om$). Note that in $\mH^1_0(\Om)$, one has a Poincar\'e inequality as in (\ref{PoincareIneq}) which guarantees that $\psi_\alpha$ is well-defined. Set $\tilde{\E}_\alpha\coloneqq\E_\alpha-\nabla\psi_\alpha$. This field belongs to $\boldsymbol{\mX}_N(\eps)$. By exploiting the first equality in
\begin{equation}\label{TermeCroise}
\int_\Om \eps|\nabla\psi_\alpha|^2\,d\bfx=\int_\Om \eps\E_\alpha\cdot\nabla\psi_\alpha\,d\bfx=\int_\Om(\eps-1)\E_\alpha\cdot\nabla\psi_\alpha\,d\bfx,
\end{equation}
as well as (\ref{RelationHomogeneous}), we find
\begin{equation}\label{TermDecompoVaria}
\begin{array}{c}
\dsp\int_\Om \mu^{-1}|\curlvec\tilde{\E}_\alpha|^2-\lambda_N\eps|\tilde{\E}_\alpha|^2\,d\bfx=\dsp\int_\Om \mu^{-1}|\curlvec\E_\alpha|^2-\lambda_N(\eps|\E_\alpha|^2-\eps|\nabla\psi_\alpha|^2)\,d\bfx\\[10pt]
\hspace{3cm}=\dsp\alpha\lambda_N+\int_\Om (\mu^{-1}-1)|\curlvec\E_\alpha|^2\,d\bfx+\lambda_N\dsp\int_\Om (1-\eps)|\E_\alpha|^2+\eps|\nabla\psi_\alpha|^2\,d\bfx.
\end{array}
\end{equation}
Let us estimate the quantity $\int_{\Om}\eps|\nabla\psi_\alpha|^2\,d\bfx$ in the right-hand side above. From (\ref{TermeCroise}) and the Young inequality $ab\le a^2/4+b^2$, assuming that $\eps-1\ge0$ in $\Om$, we can write
\[
\int_\Om \eps|\nabla\psi_\alpha|^2\,d\bfx = \int_\Om(\eps-1)\E_\alpha\cdot\nabla\psi_\alpha\,d\bfx\le \cfrac{1}{4} \int_\Om(\eps-1)|\E_\alpha|^2\,d\bfx+\int_\Om (\eps-1)|\nabla\psi_\alpha|^2\,d\bfx
\]
and so 
\begin{equation}\label{Term1}
\int_\Om |\nabla\psi_\alpha|^2\,d\bfx\le \cfrac{1}{4} \int_\Om(\eps-1)|\E_\alpha|^2\,d\bfx.
\end{equation}
From (\ref{TermeCroise}), we also have
\[
\int_\Om \eps|\nabla\psi_\alpha|^2\,d\bfx\le \cfrac{1}{4} \int_\Om(\eps-1)^2|\E_\alpha|^2\,d\bfx+\int_{\mrm{supp}(\eps-1)}  |\nabla\psi_\alpha|^2\,d\bfx,
\]
where $\mrm{supp}(\eps-1)$ stands for the support of $\eps-1$, and so 
\begin{equation}\label{Term2}
\int_{\Om\setminus\mrm{supp}(\eps-1)} |\nabla\psi_\alpha|^2\,d\bfx+\int_{\Om} (\eps-1)|\nabla\psi_\alpha|^2\,d\bfx \le \cfrac{1}{4} \int_\Om(\eps-1)^2|\E_\alpha|^2\,d\bfx.
\end{equation}
Summing (\ref{Term1}) and (\ref{Term2}), we obtain
\begin{equation}\label{Control}
\int_\Om \eps|\nabla\psi_\alpha|^2\,d\bfx\le \cfrac{1}{4} \int_\Om(\eps-1)|\E_\alpha|^2\,d\bfx+\cfrac{1}{4} \int_\Om(\eps-1)^2|\E_\alpha|^2\,d\bfx.
\end{equation}
Inserting (\ref{Control}) in (\ref{TermDecompoVaria}) yields 
\[
\begin{array}{ll}
&\dsp\int_\Om \mu^{-1}|\curlvec\tilde{\E}_\alpha|^2-\lambda_N\eps|\tilde{\E}_\alpha|^2\,d\bfx  \\[10pt]
\le &\dsp\alpha\lambda_N+\int_\Om (\mu^{-1}-1)|\curlvec\E_\alpha|^2\,d\bfx+\cfrac{\lambda_N}{4}\dsp\int_\Om (\eps-1)(\eps-4)|\E_\alpha|^2\,d\bfx.
\end{array}
\]
The right-hand side above converges as $\alpha$ tends to zero to the quantity
\[ 
\lambda_N\int_{\Om}(\mu^{-1}-1)\,\lambda_N\varphi_N^2+\cfrac{1}{4}\,(\eps-1)(\eps-4)|\nabla\varphi_N|^2\,d\bfx.
\]
Thus we have the following statement:
\begin{proposition}
If $\eps$, $\mu$ are such that $\eps\ge1$ in $\Om$ and 
\begin{equation}\label{CriterionEpszvariable2}
\int_{\Om}(\mu^{-1}-1)\,\lambda_N\varphi_N^2+\cfrac{1}{4}\,(\eps-1)(\eps-4)|\nabla\varphi_N|^2\,d\bfx<0,
\end{equation}
where $\varphi_N$ is an eigenfunction of the Neumann Laplacian in $S$ associated with $\lambda_N$, then $\sigma_{\mrm{d}}(A)\ne\emptyset$. 
\end{proposition}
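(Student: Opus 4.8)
The plan is to apply the min--max characterization (\ref{MinMaxNonHomo}): it suffices to exhibit a single $\E\in\boldsymbol{\mX}_N(\eps)\setminus\{0\}$ with $\int_\Om\mu^{-1}|\curlvec\E|^2-\lambda_N\eps|\E|^2\,d\bfx<0$. Following the $z$-variable case, I would start from the TE-type test field $\E_\alpha$ of (\ref{DefEModeTE}), built from a normalised Neumann eigenfunction $\varphi_N$ and damped by $e^{-\alpha|z|}$, with $\alpha>0$ a parameter to be sent to $0$. The new difficulty is that when $\eps$ varies in $x,y$ one has $\div(\eps\E_\alpha)\ne0$, so I would correct $\E_\alpha$ into $\tilde{\E}_\alpha\coloneqq\E_\alpha-\nabla\psi_\alpha$, where $\psi_\alpha\in\mH^1_0(\Om)$ solves the weighted elliptic problem (\ref{DefPsiVariableDouble}); this is well posed by the Poincar\'e inequality (\ref{PoincareIneq}), and by construction $\tilde{\E}_\alpha\in\boldsymbol{\mX}_N(\eps)$.

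Next I would compute the Rayleigh numerator for $\tilde{\E}_\alpha$. Expanding $|\curlvec\tilde{\E}_\alpha|^2=|\curlvec\E_\alpha|^2$ and $\eps|\tilde{\E}_\alpha|^2=\eps|\E_\alpha|^2-2\eps\E_\alpha\cdot\nabla\psi_\alpha+\eps|\nabla\psi_\alpha|^2$, then using the cross-term identity (\ref{TermeCroise}) together with the unperturbed-mode identity (\ref{RelationHomogeneous}) (which is exactly what makes the leading term collapse to $\alpha\lambda_N$), one reaches the decomposition (\ref{TermDecompoVaria}): the numerator equals $\alpha\lambda_N$, plus the perturbative terms $\int_\Om(\mu^{-1}-1)|\curlvec\E_\alpha|^2+\lambda_N(1-\eps)|\E_\alpha|^2\,d\bfx$, plus the nuisance term $\lambda_N\int_\Om\eps|\nabla\psi_\alpha|^2\,d\bfx$. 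The heart of the argument is to absorb this last term, and this is where the hypothesis $\eps\ge1$ enters. Applying Young's inequality $ab\le a^2/4+b^2$ inside the integral identity (\ref{TermeCroise}) with the nonnegative weight $\eps-1$, once with $a=|\E_\alpha|$ and once with $a=(\eps-1)|\E_\alpha|$, and splitting over $\mrm{supp}(\eps-1)$ and its complement, yields the two bounds (\ref{Term1})--(\ref{Term2}), hence $\int_\Om\eps|\nabla\psi_\alpha|^2\,d\bfx\le\tfrac14\int_\Om\big[(\eps-1)+(\eps-1)^2\big]|\E_\alpha|^2\,d\bfx$ as in (\ref{Control}).

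Inserting this into (\ref{TermDecompoVaria}), the two $(\eps-1)$-contributions recombine into $\tfrac{\lambda_N}{4}\int_\Om(\eps-1)(\eps-4)|\E_\alpha|^2\,d\bfx$, so the numerator is bounded above by $\alpha\lambda_N+\int_\Om(\mu^{-1}-1)|\curlvec\E_\alpha|^2\,d\bfx+\tfrac{\lambda_N}{4}\int_\Om(\eps-1)(\eps-4)|\E_\alpha|^2\,d\bfx$. Since $\eps-1$ and $\mu-1$ are compactly supported while $|\E_\alpha|^2=|\nabla\varphi_N|^2e^{-2\alpha|z|}$ and $|\curlvec\E_\alpha|^2=(\alpha^2|\nabla\varphi_N|^2+\lambda_N^2\varphi_N^2)e^{-2\alpha|z|}$ are dominated on those supports uniformly for $\alpha\in(0,1]$, the dominated convergence theorem lets me pass to the limit $\alpha\to0$; the limit equals $\lambda_N$ times the left-hand side of (\ref{CriterionEpszvariable2}), which is strictly negative by assumption. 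Hence for all small enough $\alpha>0$ the numerator is $<0$ (in particular $\tilde{\E}_\alpha\not\equiv0$), so $\tilde{\E}_\alpha$ satisfies (\ref{MinMaxNonHomo}) and the min--max principle forces $\sigma_{\mrm{d}}(A)\ne\emptyset$.

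I expect the main obstacle to be the control of the corrector $\psi_\alpha$: because the energy space $\boldsymbol{\mX}_N(\eps)$ itself depends on $\eps$, one cannot simply reuse the homogeneous test field, and one needs a bound on $\|\nabla\psi_\alpha\|_{\boldsymbol{\mL}^2(\Om)}$ that is simultaneously explicit enough to survive in the final inequality and uniform as $\alpha\to0$. It is precisely to obtain such a bound by a soft Young-inequality argument that the sign constraint $\eps\ge1$ is imposed; relaxing it (in the spirit of the remark after the preceding proposition, where only the celerity $c$ with $c^2=(\eps\mu)^{-1}$ matters) would require a more careful treatment of $\psi_\alpha$.
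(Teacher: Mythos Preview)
Your proposal is correct and follows essentially the same approach as the paper: the same TE-type test field $\E_\alpha$, the same corrector $\psi_\alpha$ from (\ref{DefPsiVariableDouble}), the decomposition (\ref{TermDecompoVaria}), the Young-inequality estimates (\ref{Term1})--(\ref{Term2}) leading to (\ref{Control}) (with the key use of $\eps\ge1$), and the passage to the limit $\alpha\to0$ by dominated convergence. Your additional remark that the negativity of the numerator forces $\tilde{\E}_\alpha\not\equiv0$ is a nice touch that the paper leaves implicit.
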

\begin{remark}
The condition (\ref{CriterionEpszvariable2}) holds if $1\le\eps\le4$, $1\le\mu $ in $\Om$ with $1<\eps<4$ or $1<\mu$ in a set with non-empty interior. The criterion (\ref{CriterionEpszvariable2}) can also be satisfied in other situations, for example in cases where $\eps\ge4$ or $\mu\le 1$. However we emphasize that the calculation above has been done by exploiting the assumption $\eps\ge1$. 
\end{remark}
\noindent We still have an asymmetry in the assumptions made on $\eps$, $\mu$ which seems artificial. In the next subsection, we show how to remove it.

\subsection{Variable $\eps$ and $\mu$: an approach via the magnetic operator}

The analysis of the spectrum of the magnetic problem leads to consider the system
\begin{equation}\label{DefPb0InhomoH}
\begin{array}{|rcll}
\curlvec\left(\eps^{-1}\curlvec\H\right)&=&\lambda\mu\H&\mbox{ in }\Om\\[2pt]
\div(\mu\H)&=&0&\mbox{ in }\Om\\[2pt]
(\eps^{-1}\curlvec\H)\times\bfnu&=&0&\mbox{ on }\partial\Om.
\end{array}
\end{equation}
For the definition of an operator associated with (\ref{DefPb0InhomoH}), we endow $\boldsymbol{\mL}^2(\Om)$ with the weighted inner product 
\[
(\H,\H')\mapsto\int_{\Om}\mu\H\cdot\H'\,d\bfx
\]
and we denote by $A^{\H}$ the unbounded operator of the Hilbert space
\[
\boldsymbol{\mH}_T(\div;\mu,0)=\{\H\in\boldsymbol{\mL}^2(\Om)\,|\,\div(\mu\H)=0\mbox{ in }\Om,\,\mu\H\cdot\bfnu=0\mbox{ on }\partial\Om\}
\]
associated with (\ref{DefPb0InhomoH}) such that 
\begin{equation}\label{DefMagOp}
\begin{array}{|rcl}
D(A^{\H})&=&\{\H\in\boldsymbol{\mX}_T(\mu)\,|\,\curlvec(\eps^{-1}\curlvec\H)\in\boldsymbol{\mL}^2(\Om)\mbox{ and }(\eps^{-1}\curlvec\H)\times\bfnu=0\mbox{ on }\partial\Om\}\\[4pt]
A^{\H}\H&=&\mu^{-1}\curlvec(\eps^{-1}\curlvec\H).
\end{array}
\end{equation}
Here the space $\boldsymbol{\mX}_T(\mu)$ is defined by
\begin{equation}\label{DefSpaceH}
\boldsymbol{\mX}_T(\mu)\coloneqq\boldsymbol{\mH}(\curlvec)\cap\boldsymbol{\mH}_T(\div;\mu,0)
\end{equation}
with 
\[
\boldsymbol{\mH}(\curlvec)\coloneqq\{\H\in\boldsymbol{\mL}^2(\Om)\,|\,\curlvec\H\in\boldsymbol{\mL}^2(\Om)\}.
\]
This operator $A^{\H}$ is self-adjoint and non-negative. Exploiting the equivalence between Problem (\ref{DefPb0InhomoH}) and Maxwell's equations in $\E$, $\H$, classically, one establishes the following result.
\begin{lemma}\label{LemmaSpectresEH}
Assume $\Om$, $\eps$, $\mu$ as described in the beginning of the section. Then 
\[
\sigma_{\mrm{ess}}(A)=\sigma_{\mrm{ess}}(A^{\H})=[\lambda_N;+\infty).
\]
Moreover $\lambda_\bullet>0$ is an eigenvalue of $A$ if and only if it is an eigenvalue of $A^{\H}$.
\end{lemma}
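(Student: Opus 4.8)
The plan is to work with the classical first–order Maxwell system linking $\E$ and $\H$, which is what makes the two operators essentially interchangeable away from the origin. For the second assertion, start from an eigenpair $(\lambda,\E)$ of $A$ with $\lambda>0$ and set $\H\coloneqq(i\sqrt{\lambda}\,\mu)^{-1}\curlvec\E$. The verification that $\H\in D(A^{\H})$ with $A^{\H}\H=\lambda\H$ is a bookkeeping exercise: the identity $\div\,\curlvec\,\cdot=0$ gives $\div(\mu\H)=0$; the trace relation $\curlvec\E\cdot\bfnu=\div_{\partial\Om}(\E\times\bfnu)$ for fields in $\boldsymbol{\mH}_N(\curlvec)$ (a consequence of the integration-by-parts formula for $\curlvec$ together with $\E\times\bfnu=0$) gives $\mu\H\cdot\bfnu=0$; and using the first line of (\ref{DefPb0InhomoH})'s electric counterpart, namely $\curlvec(\mu^{-1}\curlvec\E)=\lambda\eps\E$, one finds $\eps^{-1}\curlvec\H=-i\sqrt{\lambda}\,\E$, whence simultaneously $(\eps^{-1}\curlvec\H)\times\bfnu=0$ on $\partial\Om$ and $\curlvec(\eps^{-1}\curlvec\H)=-i\sqrt{\lambda}\,\curlvec\E=\lambda\mu\H\in\boldsymbol{\mL}^2(\Om)$. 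Since $\H=0$ would force $\curlvec\E=0$ and then $\E=0$ (as $\lambda>0$ and $\eps\ge c>0$), the assignment $\E\mapsto\H$ is injective; exchanging the roles of $(\E,\eps)$ and $(\H,\mu)$ produces the reciprocal assignment $\H\mapsto(i\sqrt{\lambda}\,\eps)^{-1}\curlvec\H$, which one checks is the inverse map. Hence for every $\lambda>0$ the eigenspaces of $A$ and $A^{\H}$ at $\lambda$ are isomorphic, which gives the ``if and only if'' (in fact with equal multiplicities). The restriction $\lambda>0$ is used throughout and is harmless here: the topological hypotheses on $\Om$ (simply connected, connected boundary) make $\ker A$ and $\ker A^{\H}$ trivial, so $0$ is never an eigenvalue.

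For the first assertion, $\sigma_{\mrm{ess}}(A)=[\lambda_N;+\infty)$ is already at hand: Proposition~\ref{DefSigmaEss} provides it in the homogeneous case, and the essential spectrum is unchanged by the compactly supported perturbations $\eps-1$, $\mu-1$ (standard — for instance via a Weyl sequence escaping to $z=\pm\infty$, where $\eps\equiv\mu\equiv1$, or via compactness of a resolvent difference). For $A^{\H}$ the same two ingredients apply: the homogeneous magnetic operator in the straight waveguide $S\times\R$ admits the identical modal analysis — the magnetic TE/TM/TEM modes have the same cut-off frequencies as the electric ones, with $\lambda_N$ the first — so the analogue of Proposition~\ref{DefSigmaEss} (or directly the results of \cite{Filo19,FeMa24}) yields $\sigma_{\mrm{ess}}(A^{\H})=[\lambda_N;+\infty)$, again stable under the compactly supported coefficient perturbation. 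Alternatively one may transfer the Weyl characterisation through the curl map: if $(\E_n)\subset D(A)$ is a singular sequence for $A$ at some $\lambda>0$, then $\int_\Om\mu^{-1}|\curlvec\E_n|^2\,d\bfx\to\lambda$ by the quadratic form, so $\H_n\coloneqq(i\sqrt{\lambda}\,\mu)^{-1}\curlvec\E_n$ remains bounded away from zero, converges weakly to zero, and — after the usual spectral smoothing needed to land inside $D(A^{\H})$ — is a singular sequence for $A^{\H}$ at $\lambda$; combined with the triviality of the kernels this gives $\sigma_{\mrm{ess}}(A)=\sigma_{\mrm{ess}}(A^{\H})$.

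I would not carry out the routine sign-and-constant bookkeeping in the $\E\leftrightarrow\H$ correspondence, nor the density argument realising the transferred singular sequence inside $D(A^{\H})$ (note that $\curlvec(\eps^{-1}\curlvec\H_n)\in\boldsymbol{\mL}^2(\Om)$ demands $A\E_n\in\boldsymbol{\mH}(\curlvec)$, which is not automatic and must be arranged by replacing $\E_n$ with a spectral cutoff). The genuinely delicate point is the treatment of boundary conditions on a Lipschitz domain: one must give precise meaning to the traces and justify $\curlvec\E\cdot\bfnu=\div_{\partial\Om}(\E\times\bfnu)$ together with the mapping properties of the tangential and normal traces for fields in $\boldsymbol{\mH}_N(\curlvec)$ and $\boldsymbol{\mH}(\curlvec)$, so that ``$\mu\H\cdot\bfnu=0$'' and ``$(\eps^{-1}\curlvec\H)\times\bfnu=0$'' are correct statements; this is classical and I would invoke the standard trace theory rather than reprove it.
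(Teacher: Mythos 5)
Your proof is correct and is precisely the argument the paper has in mind: the lemma is stated there without proof, with only the remark that it follows ``classically'' from the equivalence of the electric and magnetic formulations via the first-order Maxwell system, which is exactly the correspondence $\E\mapsto(i\sqrt{\lambda}\,\mu)^{-1}\curlvec\E$ (and its reverse) that you spell out, combined with the standard stability of the essential spectrum under compactly supported perturbations of the coefficients. The only quibbles are cosmetic: composing your two maps gives $-\mrm{Id}$ rather than $\mrm{Id}$ (harmless, as you defer the sign bookkeeping), and in the alternative Weyl-sequence route the spectral cutoff is needed not only so that $A\E_n\in\boldsymbol{\mH}(\curlvec)$ but also so that $(\eps^{-1}\curlvec\H_n)\times\bfnu=0$ holds, which your fix does in fact deliver since it places $A\E_n$ back in $D(A)\subset\boldsymbol{\mX}_N(\eps)$.
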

\noindent From the min-max principle, to guarantee that $\sigma_{\mrm{d}}(A)=\sigma_{\mrm{d}}(A^{\H})\ne\emptyset$, it is sufficient to prove the existence of 
$\H\in\boldsymbol{\mX}_T(\Om)\setminus\{0\}$ such that
\[
\cfrac{\dsp\int_{\Om}\eps^{-1}|\curlvec\H|^2\,d\bfx}{\dsp\int_{\Om}\mu|\H|^2\,d\bfx}<\lambda_N\qquad\Leftrightarrow\qquad \dsp\int_{\Om}\eps^{-1}\,|\curlvec\H|^2-\lambda_N\mu|\H|^2\,d\bfx<0.
\]

\begin{proposition}\label{PropositionepsVaria}
If $\mu\equiv1$ in $\Om$  and $\eps$ is such that
\begin{equation}\label{CriterionH}
\dsp\int_\Om (\eps^{-1}-1)|\nabla\varphi_N|^2\,d\bfx<0,
\end{equation}
where $\varphi_N$ is an eigenfunction of the Neumann Laplacian in $S$ associated with $\lambda_N$, then $\sigma_{\mrm{d}}(A)=\sigma_{\mrm{d}}(A^{\H})\ne\emptyset$.  
\end{proposition}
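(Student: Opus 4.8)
The plan is to pass through the magnetic operator $A^{\H}$ of (\ref{DefMagOp}). By Lemma \ref{LemmaSpectresEH} one has $\sigma_{\mrm{d}}(A)=\sigma_{\mrm{d}}(A^{\H})$, so it is enough to produce $\H\in\boldsymbol{\mX}_T(\Om)\setminus\{0\}$ with $\int_\Om\eps^{-1}|\curlvec\H|^2\,d\bfx-\lambda_N\int_\Om|\H|^2\,d\bfx<0$ (we use $\mu\equiv1$, so $\eps^{-1}=\eps^{-1}\mu^{-1}$ and $\mu=1$ in the quotient). The reason this detour dispenses with the sign condition on $\eps$ that appeared in the two preceding subsections is that, since $\mu\equiv1$, the form domain $\boldsymbol{\mX}_T(\Om)$ no longer depends on $\eps$: one can therefore build a test field out of the \emph{unperturbed} threshold mode, without any $\eps$-dependent corrector, so that $\eps^{-1}-1$ only ever enters the numerator of the Rayleigh quotient.

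For the test field I would start from $\curlvec\big((\curlvec_{\mrm{2D}}\varphi_N,0)^{\top}\big)$, which is (up to a constant) the magnetic field of the fundamental TE mode at its cut-off $\lambda=\lambda_N$ — a purely longitudinal field, proportional to $(0,0,\varphi_N)^{\top}$ — and localise it in $z$ by a \emph{smooth} profile: for $\alpha>0$,
\[
\H_\alpha\coloneqq\curlvec\big((\curlvec_{\mrm{2D}}\varphi_N,\,0)^{\top}\,\chi(\alpha z)\big),
\]
where $\chi\in C^\infty(\R)$ is a fixed function with $\chi(0)=1$ and $\chi,\chi',\chi''\in\mL^2(\R)\cap\mL^\infty(\R)$ (e.g.\ $\chi(z)=e^{-z^2}$). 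A smooth profile is indispensable here: applying $\curlvec$ to the field (\ref{DefEModeTE}) with its profile $e^{-\alpha|z|}$ would produce a Dirac mass at $z=0$ in the curl, so that field would not lie in $\boldsymbol{\mH}(\curlvec)$. Being a curl, $\H_\alpha$ is divergence free; the Neumann condition $\partial_{\boldsymbol{n}}\varphi_N=0$ gives $\H_\alpha\cdot\bfnu=0$ on $\partial\Om$; and using $\curlvec\curlvec\cdot=-\boldsymbol{\Delta}\cdot+\nabla(\div\cdot)$ together with $-\Delta\varphi_N=\lambda_N\varphi_N$, a direct computation gives $\curlvec\H_\alpha=(\lambda_N\chi(\alpha z)-\alpha^2\chi''(\alpha z))\,(\curlvec_{\mrm{2D}}\varphi_N,0)^{\top}\in\boldsymbol{\mL}^2(\Om)$, so $\H_\alpha\in\boldsymbol{\mX}_T(\Om)\setminus\{0\}$.

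It then remains to estimate the Rayleigh quotient of $\H_\alpha$. Writing $\chi_\alpha(z)\coloneqq\chi(\alpha z)$ and $\|\cdot\|$ for the $\mL^2(\R)$-norm, normalising $\|\varphi_N\|_{\mL^2(S)}=1$, using $\|\curlvec_{\mrm{2D}}\varphi_N\|^2_{\mL^2(S)}=\|\nabla\varphi_N\|^2_{\mL^2(S)}=\lambda_N$, the splitting $\eps^{-1}=1+(\eps^{-1}-1)$ and one integration by parts in $z$, the terms in $\|\chi_\alpha\|^2$ cancel and one arrives at
\[
\int_\Om\eps^{-1}|\curlvec\H_\alpha|^2-\lambda_N|\H_\alpha|^2\,d\bfx=\lambda_N^2\|\chi_\alpha'\|^2+\lambda_N\|\chi_\alpha''\|^2+\int_\Om(\eps^{-1}-1)\,|\nabla\varphi_N|^2\,(\lambda_N\chi_\alpha-\chi_\alpha'')^2\,d\bfx.
\]
By the scaling $\chi_\alpha(z)=\chi(\alpha z)$ one has $\|\chi_\alpha'\|^2=\alpha\|\chi'\|^2\to0$ and $\|\chi_\alpha''\|^2=\alpha^3\|\chi''\|^2\to0$ as $\alpha\to0^+$; and since $\eps^{-1}-1$ is compactly supported while $\chi_\alpha\to1$ and $\chi_\alpha''\to0$ uniformly on that support (with $|\lambda_N\chi_\alpha-\chi_\alpha''|\le\lambda_N\|\chi\|_{\mL^\infty(\R)}+\|\chi''\|_{\mL^\infty(\R)}$ for $\alpha\le1$), dominated convergence shows the last integral tends to $\lambda_N^2\int_\Om(\eps^{-1}-1)|\nabla\varphi_N|^2\,d\bfx$, which is negative by (\ref{CriterionH}). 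Hence for $\alpha$ small enough the displayed quantity is negative, the Rayleigh quotient of $\H_\alpha$ is strictly below $\lambda_N=\inf\sigma_{\mrm{ess}}(A^{\H})$, the min-max principle gives $\sigma_{\mrm{d}}(A^{\H})\ne\emptyset$, and Lemma \ref{LemmaSpectresEH} yields $\sigma_{\mrm{d}}(A)=\sigma_{\mrm{d}}(A^{\H})\ne\emptyset$.

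The main obstacle is exactly the regularity point flagged above: the magnetic test field is a curl, hence one $z$-derivative ``deeper'' than the electric TE field, so the piecewise profile $e^{-\alpha|z|}$ used in the previous paragraphs is no longer admissible and must be replaced by a smooth one; the price is the two extra terms $\lambda_N^2\|\chi_\alpha'\|^2+\lambda_N\|\chi_\alpha''\|^2$, which one checks are $o(1)$ as $\alpha\to0^+$. One could alternatively keep the longitudinal field $(0,0,\varphi_N e^{-\alpha|z|})^{\top}$ and restore $\div\H_\alpha=0$ by subtracting $\nabla\psi_\alpha$ with $\psi_\alpha$ solving a Neumann problem in $\Om$, but that would require an additional well-posedness discussion for $\psi_\alpha$ in the infinite cylinder; the curl-potential construction sidesteps this and, crucially, needs no corrector at all — which is precisely where the hypothesis $\mu\equiv1$ is used.
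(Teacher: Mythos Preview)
Your proof is correct and takes a genuinely different route from the paper's.

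The paper works with the simpler ansatz $\H_\alpha=(0,0,\varphi_N e^{-\alpha|z|})^\top$, which satisfies $\int_\Om|\curlvec\H_\alpha|^2-\lambda_N|\H_\alpha|^2\,d\bfx=0$ exactly but is \emph{not} divergence free. It then subtracts a gradient $\nabla\psi_\alpha$; since the Neumann Laplacian is not coercive on $\mH^1(\Om)$, the paper circumvents the well-posedness issue by separation of variables, writing $\psi_\alpha=\varphi_N(x,y)f_\alpha(z)$ with $f_\alpha$ solving the 1D problem $f_\alpha''-\lambda_N f_\alpha=-\alpha\,\mrm{sign}(z)e^{-\alpha|z|}$ in $\R$, and then shows $\int_\Om|\nabla\psi_\alpha|^2\,d\bfx\le C\alpha$. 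This is precisely the alternative you sketch in your last paragraph.

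Your curl-potential construction $\H_\alpha=\curlvec\big((\curlvec_{\mrm{2D}}\varphi_N,0)^\top\chi(\alpha z)\big)$ sidesteps the corrector entirely: the field is automatically divergence free, and the Neumann condition on $\varphi_N$ gives the normal trace condition for free. The price, as you correctly identify, is that the profile must be smooth (the curl brings in $\chi_\alpha''$), and the reference quantity $\int_\Om|\curlvec\H_\alpha|^2-\lambda_N|\H_\alpha|^2\,d\bfx$ is no longer exactly zero but the positive remainder $\lambda_N^2\|\chi_\alpha'\|^2+\lambda_N\|\chi_\alpha''\|^2=O(\alpha)$. Your approach is arguably cleaner and more self-contained (no auxiliary ODE, no discussion of the Neumann problem in the infinite cylinder); the paper's approach keeps the standard $e^{-\alpha|z|}$ profile used throughout Section~\ref{SectionVariableCoef} and isolates the correction explicitly. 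Both arrive at the same limit $\lambda_N^2\int_\Om(\eps^{-1}-1)|\nabla\varphi_N|^2\,d\bfx$.
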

\begin{remark}
Observe that criterion (\ref{CriterionH}) holds when $\eps\ge1$ satisfies $\eps\ge c>1$ in a set with non-empty interior. However situations where $\eps^{-1}-1$ changes sign can also be considered.
\end{remark}
\begin{proof}
For $\alpha>0$ constant, define $\H_\alpha$ such that
\[
\H_\alpha(\bfx)=\left(\begin{array}{c}
0\\
0\\
\varphi_N(x,y)
\end{array}\right)e^{-\alpha|z|},
\]
where $\varphi_N$ is an eigenfunction of the Neumann Laplacian in $S$ associated with $\lambda_N$. Thanks to the decay of the exponential, we have $\H_\alpha\in\boldsymbol{\mH}(\curlvec)$. A direct calculation yields
\[
\curlvec\H_\alpha(\bfx)=\left(\begin{array}{c}
\curlvec_{\mrm{2D}}\,\varphi_N(x,y)\\[2pt]
0
\end{array}\right)e^{-\alpha|z|}
\]
so that we find
\[
\dsp\int_{\Om}|\curlvec\H_\alpha|^2-\lambda_N|\H_\alpha|^2\,d\bfx =0.
\]
However note that
\[
\div\,\H_\alpha(\bfx)=-\alpha\,\mrm{sign}(z)\varphi_N(x,y)e^{-\alpha|z|}\not\equiv0\quad\mbox{ in }\Om.
\]
To obtain a divergence free field, let us remove a gradient to $\H_\alpha$. This looks similar to what has been done in (\ref{DefPsiVariableDouble}). However to satisfy the boundary condition appearing in $\boldsymbol{\mX}_T(1)$, the variational problem defining this gradient must be posed in $\mH^1(\Om)$ and not in $\mH^1_0(\Om)$ as in (\ref{DefPsiVariableDouble}) (in other words, the scalar field should satisfy Neumann and not Dirichlet boundary conditions). The existence of this gradient is not straightforward because the form $(\psi,\psi')\mapsto \int_\Om\nabla\psi\cdot\nabla\psi'\,d\bfx$ is not coercive in $\mH^1(\Om)$. However we can exploit separation of variables. Let us look for $\psi_\alpha\in\mH^1(\Om)$ such that $\H_\alpha-\nabla\psi_\alpha$ in $\boldsymbol{\mX}_T(1)$ (see the definition of this space in (\ref{DefSpaceH}) with $\mu=1$) with $\psi_\alpha$ of the form
\begin{equation}\label{DefPsi}
\psi_\alpha(\bfx)=\varphi_N(x,y)f_\alpha(z).
\end{equation}
Then we find that $f_\alpha\in\mH^1(\R)$ is the unique solution of 
\begin{equation}\label{Problem1D}
f_\alpha''(z)-\lambda_N f_\alpha(z)=-\alpha\,\mrm{sign}(z)e^{-\alpha|z|}\quad\mbox{ in }\R.
\end{equation}
Next we set $\tilde{\H}_\alpha\coloneqq\H_\alpha-\nabla\psi_\alpha$ with $\psi_\alpha$ as in (\ref{DefPsi}). This $\tilde{\H}_\alpha$ belongs to $\boldsymbol{\mX}_T(1)$. By exploiting the relation 
\[
\int_\Om |\nabla\psi_\alpha|^2\,d\bfx=\int_\Om \H_\alpha\cdot\nabla\psi_\alpha\,d\bfx,
\]
we find
\begin{equation}\label{TermDecompoVariaH}
\begin{array}{rcl}
\dsp\int_\Om \eps^{-1}|\curlvec\tilde{\H}_\alpha|^2-\lambda_N |\tilde{\H}_\alpha|^2\,d\bfx &=&\dsp\int_\Om \eps^{-1}|\curlvec\H_\alpha|^2-\lambda_N(|\H_\alpha|^2-|\nabla\psi_\alpha|^2)\,d\bfx\\[10pt]
&=&\dsp\int_\Om (\eps^{-1}-1)|\curlvec\H_\alpha|^2\,d\bfx+\lambda_N\dsp\int_\Om |\nabla\psi_\alpha|^2\,d\bfx.
\end{array}
\end{equation}
But from (\ref{DefPsi}), (\ref{Problem1D}), we can write 
\[
\begin{array}{rcl}
\dsp\int_\Om |\nabla\psi_\alpha|^2\,d\bfx&=&\dsp\int_{S} |\nabla\varphi_N(x,y)|^2f_\alpha(z)^2+\varphi_N(x,y)^2f_\alpha'(z)^2\,d\bfx\\[10pt]
&=&\dsp\int_{\R} (f_\alpha')^2+\lambda_Nf_\alpha^2\,dz 
\le \dsp\alpha\,\|e^{-\alpha |z|}\|_{\mL^2(\R)}\|f_\alpha\|_{\mL^2(\R)}\le \sqrt{\alpha}\,\|f_\alpha\|_{\mL^2(\R)}.
\end{array}
\]
Moreover from (\ref{Problem1D}), we obtain $\|f_\alpha\|_{\mL^2(\R)}\le C\sqrt{\alpha}$. We deduce  
\[
\dsp\int_\Om |\nabla\psi_\alpha|^2\,d\bfx \le C\alpha
\]
for some $C>0$ independent of $\alpha$. Using the above estimate as well as the fact that $\eps^{-1}-1$ is compactly supported, we can take the limit as $\alpha$ tends to zero in (\ref{TermDecompoVariaH}) to find that it converges to 
\[
\dsp\int_\Om (\eps^{-1}-1)|\nabla\varphi_N|^2\,d\bfx.
\]
Thus if (\ref{CriterionH}) holds, then for $\alpha>0$ small enough  we have $\int_\Om \eps^{-1}|\curlvec\tilde{\H}_\alpha|^2-\lambda_N |\tilde{\H}_\alpha|^2\,d\bfx<0$. 
\end{proof}

\noindent Finally we can state the following result for varying $\eps$ and $\mu$.
\begin{proposition}\label{PropoDouble}
Assume that $\eps,\mu\ge1$ satisfy $\eps\ge c>1$ or $\mu\ge c>1$ in a set with non-empty interior. Then $\sigma_{\mrm{d}}(A)=\sigma_{\mrm{d}}(A^{\H})\ne\emptyset$.  
\end{proposition}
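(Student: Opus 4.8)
The plan is to reduce everything to the two situations already settled, namely $\mu\equiv1$ (Proposition \ref{PropositionepsVaria}) and $\eps\equiv1$ (the Proposition yielding criterion (\ref{CriterionEpszvariable2}), equivalently Proposition \ref{Proposition1Variable}), using the duality of Lemma \ref{LemmaSpectresEH} together with a one-sided monotonicity of the relevant Rayleigh quotients. To track the coefficients, write $A_{\eps,\mu}$ and $A^{\H}_{\eps,\mu}$ for the electric and magnetic operators of (\ref{DefPb0Inhomo}), (\ref{DefPb0InhomoH}); by Lemma \ref{LemmaSpectresEH} one has $\sigma_{\mrm{ess}}(A_{\eps,\mu})=\sigma_{\mrm{ess}}(A^{\H}_{\eps,\mu})=[\lambda_N;+\infty)$ and $\sigma_{\mrm d}(A_{\eps,\mu})=\sigma_{\mrm d}(A^{\H}_{\eps,\mu})$.

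The observation that makes the argument work is a monotonicity of the min-max in one coefficient at a time, valid because the appropriate form domain does not see that coefficient. Indeed, $\boldsymbol{\mX}_N(\eps)$ does not depend on $\mu$ and, for each fixed $\E$, the quotient $\int_\Om\mu^{-1}|\curlvec\E|^2\,d\bfx\,/\,\int_\Om\eps|\E|^2\,d\bfx$ is non-increasing in $\mu$; hence, by the min-max principle, $\mu\ge1$ implies $\inf\sigma(A_{\eps,\mu})\le\inf\sigma(A_{\eps,1})$. Symmetrically, $\boldsymbol{\mX}_T(\mu)$ does not depend on $\eps$ and $\int_\Om\eps^{-1}|\curlvec\H|^2\,d\bfx\,/\,\int_\Om\mu|\H|^2\,d\bfx$ is non-increasing in $\eps$, so $\eps\ge1$ implies $\inf\sigma(A^{\H}_{\eps,\mu})\le\inf\sigma(A^{\H}_{1,\mu})$.

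Now I distinguish two cases according to where the strict inequality holds. If $\eps\ge c>1$ on a set with non-empty interior, I work with the electric operator: $\inf\sigma(A_{\eps,\mu})\le\inf\sigma(A_{\eps,1})$ by the monotonicity above, and $\sigma_{\mrm d}(A_{\eps,1})=\sigma_{\mrm d}(A^{\H}_{\eps,1})$ by Lemma \ref{LemmaSpectresEH}. Since $A^{\H}_{\eps,1}$ is precisely the magnetic operator with $\mu\equiv1$ of Proposition \ref{PropositionepsVaria}, and since $\eps\ge1$ with $\eps>1$ on a non-empty open set forces $\int_\Om(\eps^{-1}-1)|\nabla\varphi_N|^2\,d\bfx<0$ (the integrand is $\le0$ and, because the critical set $\{\nabla\varphi_N=0\}$ of the analytic eigenfunction $\varphi_N$ has measure zero, is $<0$ on a set of positive measure), Proposition \ref{PropositionepsVaria} gives $\sigma_{\mrm d}(A^{\H}_{\eps,1})\ne\emptyset$, hence $\inf\sigma(A_{\eps,1})<\lambda_N$, hence $\inf\sigma(A_{\eps,\mu})<\lambda_N=\inf\sigma_{\mrm{ess}}(A_{\eps,\mu})$, so $\sigma_{\mrm d}(A)\ne\emptyset$. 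If instead $\mu\ge c>1$ on a set with non-empty interior, I argue dually with the magnetic operator: $\inf\sigma(A^{\H}_{\eps,\mu})\le\inf\sigma(A^{\H}_{1,\mu})$ and $\sigma_{\mrm d}(A^{\H}_{1,\mu})=\sigma_{\mrm d}(A_{1,\mu})$; the electric operator $A_{1,\mu}$ has $\eps\equiv1$ and $\mu\ge1$ with $\mu>1$ on a non-empty open set, so criterion (\ref{CriterionEpszvariable2}) reduces to $\int_\Om(\mu^{-1}-1)\lambda_N\varphi_N^2\,d\bfx<0$, which holds because the nodal set $\{\varphi_N=0\}$ has measure zero; thus $\sigma_{\mrm d}(A_{1,\mu})\ne\emptyset$, and chaining the inequalities as before gives $\inf\sigma(A^{\H}_{\eps,\mu})<\lambda_N$, whence $\sigma_{\mrm d}(A^{\H})\ne\emptyset$ and $\sigma_{\mrm d}(A)\ne\emptyset$ by Lemma \ref{LemmaSpectresEH}. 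Since one of the two cases always occurs, the proposition follows.

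The part requiring the most care is keeping the monotonicity honest: it is essential that when $\mu$ is varied the test field ranges over the fixed space $\boldsymbol{\mX}_N(\eps)$, and when $\eps$ is varied over the fixed space $\boldsymbol{\mX}_T(\mu)$ — which is exactly why passing through the magnetic formulation via Lemma \ref{LemmaSpectresEH} is needed. The only other point is the elementary measure-zero statement for the nodal and critical sets of the Neumann eigenfunction $\varphi_N$, which guarantees the strict sign of the integrals entering the two criteria.
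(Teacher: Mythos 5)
Your proof is correct and rests on the same mechanism as the paper's: reduce to the single\--coefficient results (Proposition \ref{PropositionepsVaria} for $\eps$, Proposition \ref{Proposition1Variable} for $\mu$) and exploit that the remaining coefficient, being $\ge1$, can only lower the Rayleigh quotient over a form domain that does not depend on it. The paper implements this concretely on the electric side only: it takes the eigenfunction $\E_p\in\boldsymbol{\mX}_N(\eps)$ of the problem with $\mu\equiv1$ as a test field for general $\mu\ge1$, and disposes of the remaining situation by assuming $\eps\equiv1$ outright. You instead phrase the second step as a monotonicity of $\inf\sigma$ in one coefficient at a time and run the argument symmetrically, dualizing through $A^{\H}$ and $\boldsymbol{\mX}_T(\mu)$ via Lemma \ref{LemmaSpectresEH} when $\mu$ is the coefficient driving the strict inequality. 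This buys you a cleaner dichotomy: your two cases are exactly the two alternatives in the hypothesis, whereas the paper's split ``$\eps\equiv1$'' versus ``$\eps\ge c>1$ on a non\--empty open set'' is not literally exhaustive for $\mL^\infty$ coefficients (e.g.\ $\eps>1$ only on a nowhere dense set of positive measure while $\mu$ carries the open set); your magnetic\--side monotonicity $\inf\sigma(A^{\H}_{\eps,\mu})\le\inf\sigma(A^{\H}_{1,\mu})$ handles that residual situation for free. Your explicit justification of the strict sign of the two criteria via the measure\--zero nodal and critical sets of $\varphi_N$ is also a point the paper leaves to a remark.
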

\begin{proof}
If $\eps\equiv1$ in $\Om$, then Proposition \ref{Proposition1Variable} gives the result. Therefore, let us assume that $\eps\ge c>1$ in a set with non-empty interior. In this case, according to Proposition \ref{PropositionepsVaria}, the electric problem (\ref{DefPb0Inhomo}) with $\mu\equiv1$ admits an eigenvalue $\lambda_\bullet< \lambda_N$. As a consequence, there is some $\E_p\in\boldsymbol{\mX}_N(\eps)\setminus\{0\}$ such that 
\[
\begin{array}{|rcll}
\curlvec(\curlvec\E_p)&=&\lambda_\bullet\eps\E_p&\mbox{ in }\Om\\[2pt]
\div(\eps\E_p)&=&0&\mbox{ in }\Om\\[2pt]
\E_p\times\bfnu&=&0&\mbox{ on }\partial\Om.
\end{array}
\]
In particular, we have
\[
\dsp\int_{\Om}|\curlvec\E_p|^2-\lambda_N\eps|\E_p|^2\,d\bfx< \dsp\int_{\Om}|\curlvec\E_p|^2-\lambda_\bullet\eps|\E_p|^2\,d\bfx=0.
\]
This allows us to write
\[
\dsp\int_{\Om}\mu^{-1}\,|\curlvec\E_p|^2-\lambda_N\eps|\E_p|^2\,d\bfx<\dsp\int_{\Om}(\mu^{-1}-1)\,|\curlvec\E_p|^2\,d\bfx.
\]
If $\mu\ge1$ in $\Om$, the right-hand side above is non-positive and the min-max principle guarantees that $\sigma_{\mrm{d}}(A)\ne\emptyset$. 
\end{proof}

\begin{remark}
One can adapt the above approach to show that if $A$ has a non-empty discrete spectrum for $\eps\equiv\mu\equiv1$ in some $\Om$ presenting a local perturbation of the geometry with respect to the straight channel $S\times\R$, then $A$ also has a non-empty discrete spectrum in $\Om$ for $\eps\ge1$, $\mu\ge1$ such that $\eps-1$, $\mu-1$ have compact supports. 
\end{remark}

\subsection{Extension to matrix valued coefficients}\label{ParaAniso}

Assume in this paragraph that $\eps$, $\mu\in \mL^{\infty}(\Om,\Cplx^{3\times 3})$ are matrix valued functions such that $\eps(\bfx)$, $\mu(\bfx)$ are symmetric positive definite for almost all $\bfx\in\Om$. Suppose that $\eps^{-1}$, $\mu^{-1}\in \mL^{\infty}(\Om,\Cplx^{3\times 3})$ and that $\eps-\mrm{Id}$, $\mu-\mrm{Id}$ are compactly supported. In this case, the definitions for the spaces $\boldsymbol{\mX}_N(\eps)$, $\boldsymbol{\mX}_T(\mu)$ as well as the operators $A$, $A^{\H}$ extend naturally and some of the results above are still valid. Let us state them.\\
\newline
Assume additionally that $\eps$ is of the form 
\[
\eps(\bfx)=\left(\begin{array}{ccc}
\tilde{\eps}(z) & 0 & \eps_{xz}(x,y)\\
0 & \tilde{\eps}(z) & \eps_{yz}(x,y)\\
\eps_{xz}(x,y) &  \eps_{yz}(x,y) & \eps_{zz}(x,y,z)
\end{array}\right).
\] 
First, by adapting the proof of Proposition \ref{Proposition1Variable}, one obtains that $\sigma_{\mrm{d}}(A)\ne\emptyset$ when 
\[
\int_{\Om}(\tilde{\eps}^{-1}(\mu^{-1})_{zz}-1)\,\tilde{\eps}\varphi_N^2\,d\bfx<0.
\]
To establish this result, in the definition of $\E_\alpha$ in (\ref{DefEModeTE}) it suffices to replace the term $e^{-\alpha |z|}$ by $\chi_\alpha(z)$ with $\chi_\alpha(z)=\chi(\alpha z)$, the function $\chi\in\mathscr{C}^{\infty}(\R)$ being such that $\chi(z)=1$ for $|z|\le 1$, $\chi(z)=0$ for $|z|\ge 2$. Then the 
calculations are completely similar. By adapting in the same way the proof of Proposition \ref{PropositionepsVaria}, one shows that $\sigma_{\mrm{d}}(A)\ne\emptyset$ when $\mu\equiv1$ in $\Om$ and $\eps$ is such that
\[
\dsp\int_\Om (\tilde{\eps}^{-1}-1)|\nabla\varphi_N|^2\,d\bfx<0.
\]
Finally, one proves as in Proposition \ref{PropoDouble} that when $\eps,\mu\ge \mrm{Id}$ (in the sense of symmetric matrices) satisfy $\eps\ge c\,\mrm{Id}>\mrm{Id}$ or $\mu\ge c\,\mrm{Id}>\mrm{Id}$ in a set with non-empty interior, $\sigma_{\mrm{d}}(A)\ne\emptyset$.  

\section{Playing with symmetries to exhibit embedded eigenvalues}\label{SectionSym}

\begin{figure}[!ht]
\centering
\includegraphics[width=4.9cm]{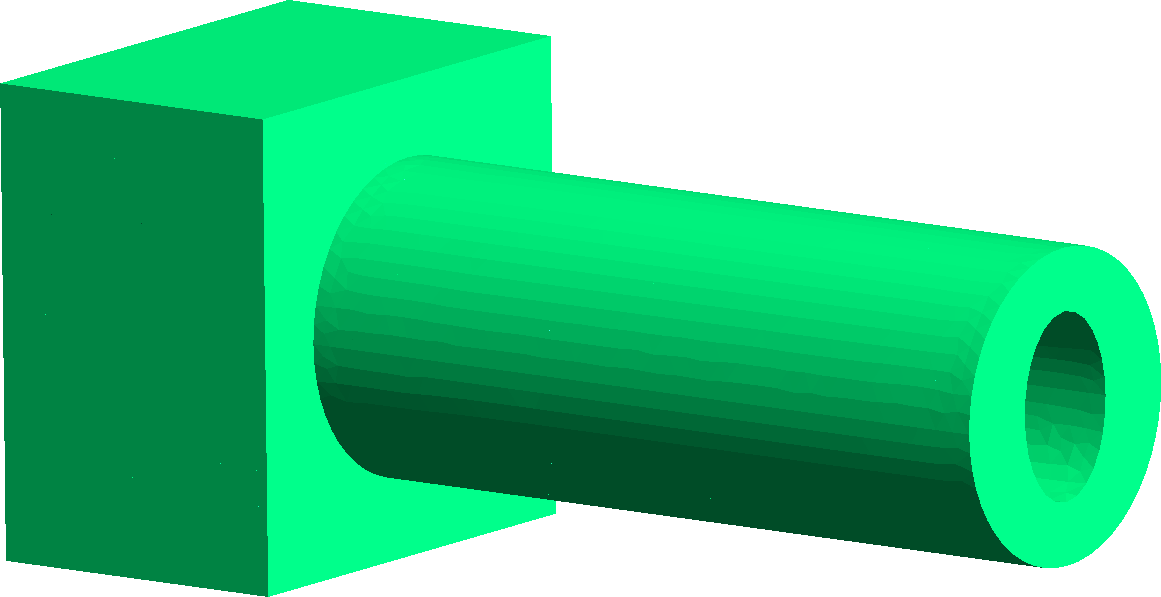}\hspace{-2cm}\raisebox{0.2cm}{\begin{tikzpicture}
\filldraw[fill=gray!20](0,0) circle (1);
\filldraw[fill=white] (0,0)ellipse(0.7 and 0.42);
\node at (0,0.7) {$S$};
\node at (-3,0) {$\Om$};
\end{tikzpicture}}\qquad
\raisebox{1cm}{\includegraphics[width=4.9cm]{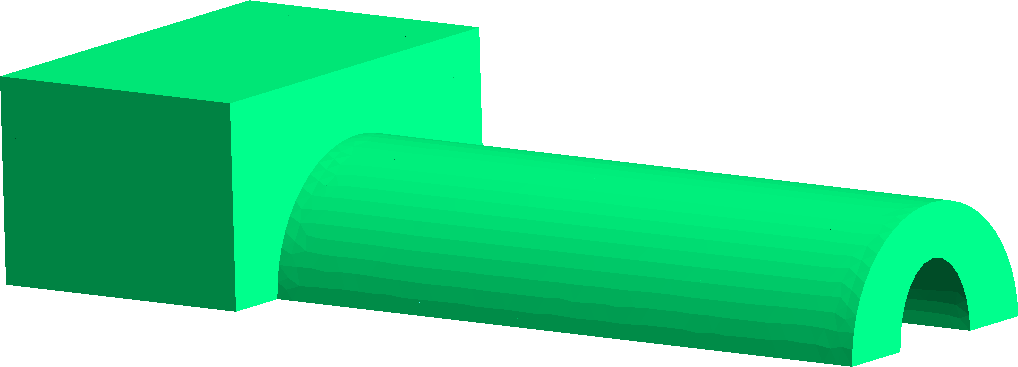}}\hspace{-2cm}\raisebox{1.1cm}{\begin{tikzpicture}
\begin{scope}
    \clip(-1.1,0) rectangle (1.1,1.1);
\filldraw[fill=gray!20](0,0) circle (1);
\filldraw[fill=white] (0,0)ellipse(0.7 and 0.42);
\node at (0,0.7) {$S_+$};
\end{scope}
\draw (-1,0)--(-0.7,0);
\draw (1,0)--(0.7,0);
\node at (-3,0.3) {$\Om_+$};
\end{tikzpicture}}\vspace{-0.2cm}
\caption{Examples of domains $\Om$, $S$, $\Om_+$, $S_+$.\label{EmbeddedGeom}}
\end{figure}

\noindent Let us come back to homogeneous waveguides, \textit{i.e.} with $\eps\equiv\mu\equiv1$. Except in Section \ref{SectionFullySeparable} where we worked in geometries where we have complete separation of variables, we only proved existence of eigenvalues in the discrete spectrum of the electric operator. Here we wish to explain how to find homogeneous waveguides where $A$ has eigenvalues which are embedded in the essential spectrum. To proceed, we adapt the classical symmetry trick presented for example in \cite{EvLV94}.\\
\newline
Let $\Om$ be as in (\ref{DefOm1})--(\ref{DefOm2}). Moreover we assume that $\Om$ is symmetric with respect to the plane $\Sigma\coloneqq\{\bfx\in\R^3\,|\,x=0\}$, \textit{i.e.} that
\[
(x,y,z)\in\Om\qquad\Rightarrow\qquad (-x,y,z)\in\Om.
\]
Set 
\[
\Om_\pm\coloneqq\{(x,y,z)\in\Om\,|\,\pm x>0\},\qquad\qquad S_+\coloneqq\{(x,y)\in S\,|\,x>0\}.
\]
Importantly, we assume that $S$ is not simply connected while $S_+$ is (see Figure \ref{EmbeddedGeom}). We consider the case of an homogeneous material, that is $\eps\equiv1$, $\mu\equiv1$, and denote by $A$ (resp. $A_+$) the operator defined in (\ref{DefOpA}) in the geometry $\Om$ (resp. $\Om_+$). According to Proposition \ref{DefSigmaEss}, we have  
\[
\sigma_{\mrm{ess}}(A)=[0;+\infty),\qquad\qquad \sigma_{\mrm{ess}}(A_+)=[\lambda_N(S_+);+\infty),
\]
where $\lambda_N(S_+)$ is the first positive eigenvalue of the Neumann Laplacian in $S_+$. Let us choose $\Om_+$ such that $\sigma_{\mrm{d}}(A_+)\ne\emptyset$. This can be done by working for example in the waveguides presented in Section \ref{Section1} or \S\ref{ParaBigRes}. For $\lambda_\bullet$ an eigenvalue in the discrete spectrum of $A_+$, introduce $\E_+=(E^+_x,E^+_y,E^+_z)^\top\in\boldsymbol{\mX}_N(\Om_+)$ a corresponding eigenfunction. Here $\boldsymbol{\mX}_N(\Om_+)$ is defined as $\boldsymbol{\mX}_N(\Om)$ in (\ref{DefSobolev}) with $\Om$ replaced by $\Om_+$. Let us exploit the symmetry of the problem with respect to $\Sigma$ to create an eigenpair for $A$.\\
\newline
Define $\E$ such that $\E=\E_\pm$ in $\Om_\pm$ with 
\[
\E_-(x,y,z)=\left(\begin{array}{c}
\phantom{-}E^+_x(-x,y,z)\\
-E^+_y(-x,y,z)\\
-E^+_z(-x,y,z)\\
\end{array}
\right)\mbox{ in }\Om_-.
\]
Using that $\E_+$ satisfies $\E_+\times\bfnu=0$ on $\Sigma$ with $\bfnu=(-1,0,0)^\top$, we deduce $E^+_y=E^+_z=0$ on $\Sigma$. Denoting classically by $[\cdot]|_{\Sigma}$ the jump of traces trough $\Sigma$, this yields
\[
[\E\times\bfnu]|_{\Sigma}=0,\qquad\qquad [\E\cdot\bfnu]|_{\Sigma}=0.
\]
Since we have $\E|_{\Om_\pm}\in\boldsymbol{\mX}_N(\Om_\pm)$, this ensures that $\E$ belongs to $\boldsymbol{\mX}_N(\Om)$ (in particular there holds $\div\,\E=0$ in $\Om$). On the other hand, we clearly have $\boldsymbol{\Delta} \E+\lambda_\bullet\E=0$ in $\Om$. Using again that $\curlvec\curlvec\cdot=-\boldsymbol{\Delta}\cdot+\nabla(\div\cdot)$, we obtain
\[
\begin{array}{|rcll}
\curlvec\curlvec\E&=&\lambda_\bullet\E&\mbox{ in }\Om\\[2pt]
\div\,\E&=&0&\mbox{ in }\Om\\[2pt]
\E\times\bfnu&=&0&\mbox{ on }\partial\Om.
\end{array}
\]
This proves that $\lambda_\bullet$ is an eigenvalue embedded in $\sigma_{\mrm{ess}}(A)=[0;+\infty)$.

\section{Concluding remarks}\label{SectionConclusion}~

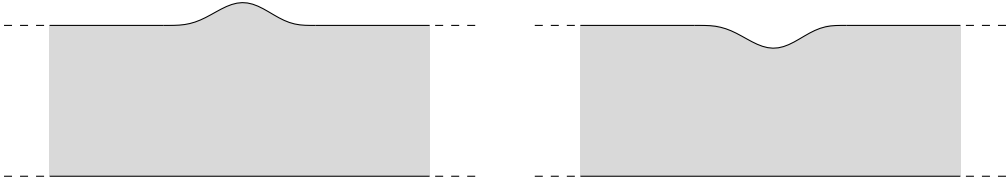
\begin{figure}[!ht]
\centering
\begin{tikzpicture}[scale=1]
\draw[fill=gray!30,draw=none](-2.5,0) rectangle (2.5,2);
\draw (-2.5,0)--(2.5,0);
\draw (-2.5,2)--(-1,2);
\draw (1,2)--(2.5,2);
\draw [dashed](-3.1,0)--(-2.5,0);
\draw [dashed](3.1,0)--(2.5,0);
\draw [dashed](-3.1,2)--(-2.5,2);
\draw [dashed](3.1,2)--(2.5,2);
\draw[gray!30] (-0.8,2)--(0.8,2);
\draw[samples=30,domain=-1:1,draw=black,fill=gray!30] plot(\x,{2+0.1*(\x+1)^4*(\x-1)^4*(\x+3)});
\end{tikzpicture}\qquad\begin{tikzpicture}[scale=1]
\draw[fill=gray!30,draw=none](-2.5,0) rectangle (2.5,2);
\draw (-2.5,0)--(2.5,0);
\draw (-2.5,2)--(-1,2);
\draw (1,2)--(2.5,2);
\draw [dashed](-3.1,0)--(-2.5,0);
\draw [dashed](3.1,0)--(2.5,0);
\draw [dashed](-3.1,2)--(-2.5,2);
\draw [dashed](3.1,2)--(2.5,2);
\draw[gray!30] (-0.8,2)--(0.8,2);
\draw[samples=30,domain=-1:1,draw=none,fill=white] plot(\x,{2.02-0.1*(\x+1)^4*(\x-1)^4*(\x+3)});
\draw[samples=30,domain=-1:1,draw=black,fill=white] plot(\x,{2-0.1*(\x+1)^4*(\x-1)^4*(\x+3)});
\end{tikzpicture}
\caption{2D waveguides with an exterior (left) or interior (right) bump. \label{PerturbedOm}}
\end{figure}

In this article, we showed results of existence of eigenvalues for Maxwell's equations in various electromagnetic waveguides which are unbounded in at least one direction. During this work, we have faced several questions that we have not been able to answer. We list a few of them here:\\[2pt]
- It is well-known that 2D waveguides with exterior bumps, as represented in Figure \ref{PerturbedOm} left, support trapped modes associated with discrete spectrum for the Dirichlet Laplacian. Can one hope for a similar result for Maxwell's operator? More precisely, can one obtain criteria on small smooth perturbations of the geometry of straight waveguides ensuring the existence of discrete spectrum? This is not obvious to prove for Maxwell's equations. One idea consists in working with the so-called Piola transform to convert the geometrically perturbed problem into a problem set in a straight waveguide with non-constant dielectric coefficients. Then one may imagine to apply the results of \S\ref{ParaAniso}. However \textit{a priori} one cannot proceed so simply because the signs of $\eps-\mrm{Id}$, $\mu-\mrm{Id}$, where  $\eps$, $\mu$ stand for the matrix valued coefficients obtained with the Piola transform, are not clear.\\[2pt]
- By adding small perfectly conducting obstacles in a straight waveguide and by using techniques of asymptotic analysis, can one ensure the existence of positive eigenvalues for the operator $A$?\\
- It is also known that the Dirichlet Laplacian in geometries with interior bumps as in Figure \ref{PerturbedOm} right has no discrete spectrum. Can one establish results of absence of discrete spectrum for Maxwell's operator $A$? Note for example that one can exploit Remark \ref{RmkFourierSeries} to guarantee that $\sigma_{\mrm{d}}(A)=\emptyset$ in the wedge geometry of Figure \ref{WedgeGeom} such that for $a>0$, $\alpha\in[0;\pi/2)$,  
\[
\Om\coloneqq (0;a)\times\Om_{\mrm{2D}}\quad\mbox{ with }\quad\Om_{\mrm{2D}}\coloneqq\{(y,z)\in\R^2\,|\,y\in(0;1),\,z\in(y\tan\alpha;+\infty)\}.
\]
- In the literature, some works have been devoted to the comparison of Maxwell's eigenvalues with the eigenvalues of other operators in bounded domains (see \cite{Paul15,Zhan18,Rohl24}). In Section \ref{ParaBigRes}, we exploited the comparison with the eigenvalues of the Dirichlet Laplacian in waveguides. Is there anything fruitful to be obtained from the (embedded) eigenvalues of the Neumann Laplacian?

\begin{figure}[!ht]
\centering
\includegraphics[width=8cm]{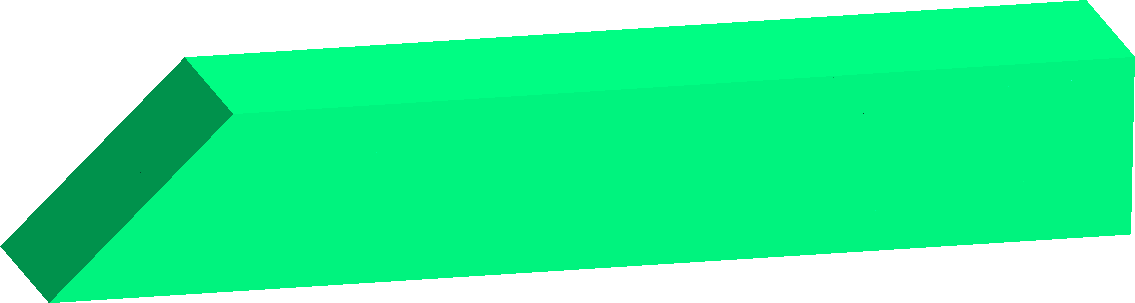}
\caption{Wedge geometry. \label{WedgeGeom}}
\end{figure} 

\section{Appendix}

\subsection{non-simply connected geometries with non-connected boundaries}\label{ParaNonTrivTop}

\begin{figure}[!ht]
\centering
\includegraphics[width=13cm]{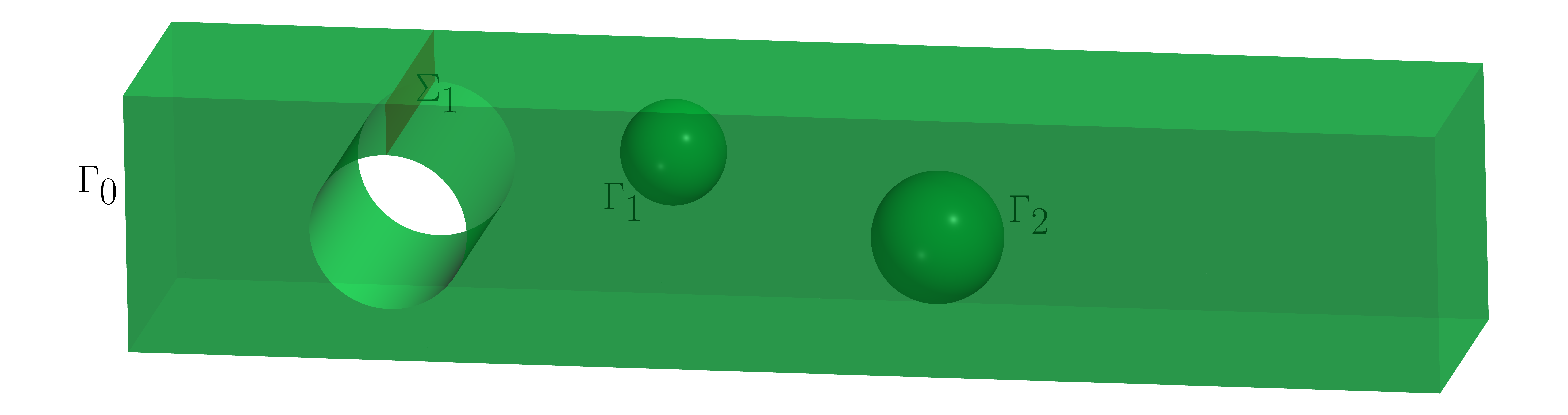}
\caption{Example of a non-simply connected waveguide $\Om$ with a non-connected boundary. Note that $\Om$ excludes the two balls and $\Sigma_1$ is an artificial cut ($\Sigma_1\subset\Om$).\label{NonConnectedBdry}}
\end{figure}

\noindent In the whole article, the domain $\Om$ was simply connected with a connected boundary. The various techniques we presented to show existence of eigenvalues for Maxwell's operator can also be used when this assumption is not satisfied. In this paragraph, we show that additionally, the electric and/or magnetic operators have non-zero kernels in these configurations, so that their discrete spectrum contains the value zero. Below we use notation similar to \cite{ABDG98}.
\paragraph{Waveguides with a non-connected boundary.}
\noindent Denote by $\Gamma_i$, $i=0,\dots, I$, the connected
components of the boundary $\partial\Om$ (see Figure \ref{NonConnectedBdry}). Assume that only $\Gamma_0$ is unbounded. Define the space 
\[
\mrm{H}^1_\Gamma(\Omega)\coloneqq\left\{\varphi\in \mrm{H}^1(\Omega)\,|\,\varphi_{|\Gamma_0}=0,\ \varphi_{|\Gamma_i}=cst,\ i=1,\dots, I\right\}. 
\]
For $i=1,\dots, I$, using a Poincar\'e inequality similar to (\ref{PoincareIneq}) and a lifting function, one can prove that there is a unique $p_i\in \mrm{H}^1_\Gamma(\Om)$ satisfying
\[
\begin{array}{|rcll}
\Delta p_i   & = & 0 & \mbox{ in }\Om \\[2pt]
p_i & = &\delta_{ik} & \mbox{ on }\Gamma_k,\ k=1,\dots, I.
\end{array}
\]
Here and below $\delta_{ik}$ stands for the Kronecker symbol such that $\delta_{ik}=1$ if $i=k$, $\delta_{ik}=0$ otherwise. Then one easily checks that $\nabla p_1,\dots,\nabla p_I$ is a family of linearly independent vector fields which belong to $\boldsymbol{\mX}_N(\Om)$. Moreover there holds 
\[
\curlvec\curlvec\nabla p_i=0,\qquad \mbox{ for }i=1,\dots, I.
\]
This proves that $\nabla p_1,\dots,\nabla p_I$ are elements of $\ker\,A$, \textit{i.e.} that $0\in\sigma_{\mrm{p}}(A)$. Note however that since the corresponding magnetic fields, $\curlvec\nabla p_i$, are null, they do not provide non-trivial elements of the kernel of the magnetic operator. 

\paragraph{non-simply connected waveguides.}
\noindent Assume that there exist bounded connected open
surfaces $\Sigma_j$, $j=1,\dots, J$, called ``cuts'' such that:\\[3pt]
~\hspace{0.5cm}\begin{tabular}{ll}
$i)$ & each surface $\Sigma_j$ is an open subset of a smooth variety;\\
$ii)$ & the boundary of $\Sigma_j$ is contained in $\partial\Om$, $j=1,\dots, J$;\\
$iii)$ & the intersection $\overline{\Sigma_j}\cap\,\overline{\Sigma_k}$ is empty for $j\ne k$;\\
$iv)$ & the open set $\dot{\Om}\coloneqq\Omega\setminus\bigcup_{i=1}^{J}\Sigma_j$ is pseudo-Lipschitz (see \cite[Def.\,3.1]{ABDG98}) and simply connected;
\end{tabular}\\[5pt]
(see the cut $\Sigma_1$ on Figure \ref{NonConnectedBdry}). The domain $\Omega$ is said topologically trivial when we can take  $J=0$. The extension operator from $\mrm{L}^2(\dot\Omega)$ to $\mrm{L}^2(\Omega)$ is denoted by $\ \widetilde{\cdot}\,$ whereas $[\cdot]_{\Sigma_j}$ denotes the jump through $\Sigma_j$, $j=1,\dots, J$. In this definition of the jump, we assume that a convention has been established for the sign. We also assume that a unit vector $\bfnu$ normal to $\Sigma_j$, $j=1,\dots, J$, is chosen, consistent with the choice of the sign of the jump. Define the space
\[
\Theta(\dot\Omega)\coloneqq\dsp\left\{\varphi\in\mrm{H}^1(\dot\Omega)\,|\,
\int_{S\times\{0\}}\varphi\,dxdy=0\mbox{ and }
[\varphi]_{\Sigma_j}=cst,\ j=1,\dots, J\right\}. 
\]
One can show by using in particular the zero-mean value constraint on $S\times\{0\}$, that for $j=1,\dots, J$, there exists a unique $q_j\in\Theta(\dot\Omega)$ solving
\[
\begin{array}{|rcll}
\Delta q_j  & = & 0 & \ \mbox{in }\dot\Om \\[2pt]
\partial_{\bfnu} q_j & = & 0 & \ \mbox{on }\partial\Om \\[2pt]
\left[q_j\right]_{\Sigma_k} & = &\delta_{jk}, &  \ k=1,\dots, J\\[2pt]
\left[\partial_{\bfnu} q_j\right]_{\Sigma_k} & = & 0, &  \ k=1,\dots, J.
\end{array}
\]
Then one observes that $\widetilde{\nabla q_1},\dots,\widetilde{\nabla q_J}$ is a family of linearly independent vector fields which belong to the space $\boldsymbol{\mX}_T(\Om)$ defined as $\boldsymbol{\mX}_T(\mu)$ in (\ref{DefSpaceH}) with $\mu\equiv1$. Moreover there holds 
\[
\curlvec\curlvec\nabla q_i=0,\qquad \mbox{ for }j=1,\dots, J.
\]
This is enough to conclude that $\widetilde{\nabla q_1},\dots,\widetilde{\nabla q_J}$ are elements of $\ker\,A^{\H}$, \textit{i.e.} that $0\in\sigma_{\mrm{p}}(A^{\H})$, where $A^{\H}$ is the magnetic operator introduced in (\ref{DefMagOp}) with $\eps=\mu\equiv1$. Note however that the corresponding electric fields, $\curlvec\nabla q_j$, are null, and so do not provide non-trivial elements of the kernel of the electric operator.

\subsection{Poincar\'e-Friedrichs inequalities}
For the convenience of the reader, we reproduce here Lemma 5.1 of \cite{BaMaNa17} (see also \cite[Lem.\,A.1]{ChNa23}).
\begin{lemma}
Assume that $a>0$. Then we have the Poincar\'e-Friedrichs inequality 
\begin{equation}\label{Fried_ineq}
 \kappa(a)\int_{0}^{1/2}\phi^2\,dt \le \int_{0}^{+\infty}(\partial_t\phi)^2\,dt+a^2\int_{1/2}^{+\infty}\phi^2\,dt,\qquad \forall \phi\in\mH^1(0;+\infty),
\end{equation}
where $\kappa(a)$ is the smallest positive root of the transcendental equation
\begin{equation}\label{TransEqnLemma}
\sqrt{\kappa}\tan\bigg(\cfrac{\sqrt{\kappa}}{2}\bigg)=a.
\end{equation}
\end{lemma}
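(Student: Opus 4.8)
The plan is to prove the sharp inequality by the classical ground‑state substitution (the supersolution, or $h$‑transform, method), which yields the estimate directly with no compactness machinery. First I would construct the conjectured extremal function $w\in\mH^1(0;+\infty)$ explicitly: set $w(t)=\cos(\sqrt{\kappa(a)}\,t)$ on $(0;1/2)$ and $w(t)=\cos(\sqrt{\kappa(a)}/2)\,e^{-a(t-1/2)}$ on $(1/2;+\infty)$. Since $g(s)\coloneqq s\tan(s/2)$ is increasing from $0$ to $+\infty$ on $(0;\pi)$, the smallest positive root of (\ref{TransEqnLemma}) satisfies $\sqrt{\kappa(a)}/2\in(0;\pi/2)$; this is exactly why one selects the \emph{smallest} root, for it forces $w>0$ on $[0;+\infty)$ (so $w$ is bounded below on compact sets and decays exponentially). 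The matching conditions $w(1/2^-)=w(1/2^+)$ and $w'(1/2^-)=w'(1/2^+)$ are, respectively, automatic and equivalent to $\sqrt{\kappa(a)}\tan(\sqrt{\kappa(a)}/2)=a$, so by construction $w\in\mathscr{C}^1(0;+\infty)$ and $w$ solves $-w''=\kappa(a)w$ on $(0;1/2)$, $-w''+a^2w=0$ on $(1/2;+\infty)$, with $w'(0)=0$.

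By density of $\mathscr{C}_c^\infty([0;+\infty))$ in $\mH^1(0;+\infty)$ it suffices to treat $\phi\in\mathscr{C}_c^\infty([0;+\infty))$. For such $\phi$, write $\phi=wh$ with $h=\phi/w$, which is Lipschitz and compactly supported because $w>0$ is smooth away from $t=1/2$ and $\mathscr{C}^1$ across it. Then $(\phi')^2=(w')^2h^2+ww'(h^2)'+w^2(h')^2$; integrating the cross term by parts and using $w'(0)=0$ together with the compact support of $h$ gives $\int_0^{+\infty}ww'(h^2)'\,dt=-\int_0^{+\infty}\big((w')^2+ww''\big)h^2\,dt$, the point $t=1/2$ contributing nothing since $w$ is globally $\mathscr{C}^1$ and $w''\in\mL^\infty_{\mathrm{loc}}$. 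The $(w')^2h^2$ terms cancel, leaving
\[
\int_0^{+\infty}(\phi')^2\,dt=-\int_0^{+\infty}w''\,w\,h^2\,dt+\int_0^{+\infty}w^2(h')^2\,dt .
\]
Inserting $-w''w=\kappa(a)w^2$ on $(0;1/2)$, $-w''w=-a^2w^2$ on $(1/2;+\infty)$ and $w^2h^2=\phi^2$ gives
\[
\int_0^{+\infty}(\phi')^2\,dt+a^2\int_{1/2}^{+\infty}\phi^2\,dt=\kappa(a)\int_0^{1/2}\phi^2\,dt+\int_0^{+\infty}w^2(h')^2\,dt\ \ge\ \kappa(a)\int_0^{1/2}\phi^2\,dt,
\]
which is (\ref{Fried_ineq}); taking $\phi=w$ (so $h\equiv1$) shows $\kappa(a)$ is optimal.

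As a fallback — and the cleanest way to \emph{motivate} the choice of the smallest root and the value of the constant — one can argue variationally: minimize the Rayleigh quotient $R(\phi)=\big(\int_0^{+\infty}(\phi')^2\,dt+a^2\int_{1/2}^{+\infty}\phi^2\,dt\big)\big/\int_0^{1/2}\phi^2\,dt$ over $\phi\in\mH^1(0;+\infty)\setminus\{0\}$. The infimum $\mu_1$ is attained (lower semicontinuity of the numerator, using compactness of point evaluation $\mH^1\to\R$, plus the compact embedding $\mH^1(0;L)\hookrightarrow\mL^2(0;L)$ for the tail reduction), it is $>0$ since $R(\phi)=0$ forces $\phi\equiv0$, and the Euler–Lagrange equation forces a minimizer of the form $\cos(\sqrt{\mu_1}\,t)$ on $(0;1/2)$ continued by a decaying exponential with $\mathscr{C}^1$ matching at $t=1/2$; hence $\mu_1$ solves (\ref{TransEqnLemma}). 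Since $\cos(\sqrt{\kappa(a)}\,t)$ is itself an admissible eigenfunction one gets $\mu_1\le\kappa(a)$, while $\mu_1>0$ and (\ref{TransEqnLemma}) give $\mu_1\ge\kappa(a)$, so $\mu_1=\kappa(a)$. I would present the ground‑state substitution as the main proof and keep the variational route as a remark. The only real obstacle is organizational rather than conceptual: handling the jump of $w''$ at $t=1/2$ in the integration by parts and the density/approximation step — both routes ultimately reduce to the same elementary one‑dimensional Sturm–Liouville computation.
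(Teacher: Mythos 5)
Your proof is correct, but it follows a genuinely different route from the paper's. The paper recasts (\ref{Fried_ineq}) as the spectral problem $-\partial_t^2\phi+a^2\mathbbm{1}_{(1/2;+\infty)}\phi=\lambda(a)\mathbbm{1}_{(0;1/2)}\phi$ on $(0;+\infty)$ with $\partial_t\phi(0)=0$, represents it through a compact self-adjoint operator $T$ on $\mH^1(0;+\infty)$ equipped with the inner product $(\phi,\phi')_a$, reads off the inequality from the Rayleigh-quotient characterization $(\kappa(a)+a^2)^{-1}=\sup_{(\phi,\phi)_a=1}(T\phi,\phi)_a$ of the smallest eigenvalue, and only then solves the ODE to identify $\kappa(a)$ with the smallest positive root of (\ref{TransEqnLemma}); this is essentially the variational argument you relegate to a remark (your fallback is a direct minimization rather than the operator formulation, but the substance is the same). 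Your main proof instead builds the positive ground state $w$ explicitly and runs the ground-state substitution $\phi=wh$, obtaining (\ref{Fried_ineq}) as an exact identity plus the nonnegative remainder $\int_0^{+\infty}w^2(h')^2\,dt$. What your route buys: no compactness or Riesz-representation machinery, sharpness for free (take $h\equiv1$), and an explicit reason why the \emph{smallest} root must be taken (it is what makes $w>0$ on $[0;1/2]$, since $\sqrt{\kappa(a)}/2\in(0;\pi/2)$, and positivity of $w$ is what the substitution needs). What the paper's route buys: it identifies $\kappa(a)$ as the bottom of a genuine spectrum without having to guess the extremal, and it packages the argument in the same min-max language used throughout the rest of the article. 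Your handling of the two delicate points — the density reduction to $\mathscr{C}^\infty_c([0;+\infty))$ (legitimate, since both sides of (\ref{Fried_ineq}) are continuous on $\mH^1(0;+\infty)$) and the integration by parts across $t=1/2$ (no boundary term there, since $ww'$ is continuous and $w''$ is merely piecewise defined and locally bounded) — is sound as sketched.
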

\begin{proof}
For $a>0$, consider the spectral problem 
\begin{equation}\label{Eigen1D}
\begin{array}{|rcll}
-\partial^2_t \phi+a^2\mathbbm{1}_{(1/2;+\infty)} \phi &=& \lambda(a)\,\mathbbm{1}_{(0;1/2)}\phi&\mbox{ in }(0;+\infty)\\[4pt]
\partial_t\phi(0)&=&0
\end{array}
\end{equation}
where $\mathbbm{1}_{(1/2;+\infty)}$, $\mathbbm{1}_{(0;1/2)}$ stand for the indicator functions of the sets $(1/2;+\infty)$, $(0;1/2)$ respectively. Let us equip $\mH^1(0;+\infty)$ with the inner product
\[
(\phi,\phi')_a=\int_{0}^{+\infty}\partial_t\phi\,\partial_t\phi'\,dt+a^2\phi\,\phi'\,dt.
\]
With the Riesz representation theorem, define the linear and continuous operator $T:\mH^1(0;+\infty)\to \mH^1(0;+\infty)$ such that
\[
(T\phi,\phi')_a=\int_{0}^{1/2}\phi\,\phi'\,dt,\qquad\forall \phi,\phi'\in\mH^1(0;+\infty).
\]
With this definition, we find that $(\lambda(a),\phi)$ is an eigenpair of (\ref{Eigen1D}) if and only if we have 
\[
T\phi=(\lambda(a)+a^2)^{-1}\phi.
\]
Since $T$ is bounded and symmetric, it is self-adjoint. Additionally the Rellich theorem ensures that $T$ is compact. This guarantees that the spectrum of (\ref{Eigen1D}) coincides with a sequence of positive eigenvalues whose only accumulation point is $+\infty$. Let us denote by $\kappa(a)$ the smallest eigenvalue of (\ref{Eigen1D}). From classical results concerning compact self-adjoint operators (see \textit{e.g.} \cite[Thm. 2.7.2]{BiSo87}), we know that
\begin{equation}\label{Def_quotient}
(\kappa(a)+a^2)^{-1}=\underset{\phi\in\mH^1(0;+\infty),\,(\phi,\phi)_a=1}{\sup} (T\phi,\phi)_a.
\end{equation}
Rearranging the terms, we find that (\ref{Def_quotient}) provides the desired estimates (\ref{Fried_ineq}). Now we compute $\kappa(a)$. Solving the ordinary differential equation (\ref{Eigen1D}) with $\lambda(a)=\kappa(a)$, we obtain, up to a multiplicative constant,
\[
\phi(t)=\begin{array}{|ll}
\cos(\sqrt{\kappa(a)}t) & \mbox{ for }t\in(0;1/2)\\[3pt]
c\,e^{-at} & \mbox{ for }t\ge 1/2
\end{array}
\]
where $c$ is a constant to determine. Writing the transmission conditions at $t=1/2$, we find that a non-zero solution exists if and only if $\kappa(a)>0$ satisfies the relation (\ref{TransEqnLemma}).
\end{proof}

\section*{Acknowledgements}

This work started after fertile discussions with Philippe Briet, Maxence Cassier, Thomas Ourmi\`eres-Bonafos and Michele Zaccaron, we thank them warmly.

\bibliographystyle{plain}
\bibliography{biblio}

\end{document}